\documentclass[11pt,a4paper]{amsart}
\usepackage{amscd}
\usepackage[T1]{fontenc}
\usepackage[latin1]{inputenc}
\usepackage{enumerate}
\usepackage{cancel}
\usepackage{amsfonts}
\usepackage{mathrsfs}
\usepackage{amsthm}
\usepackage{amssymb}
\usepackage{dsfont}
\usepackage[english]{babel}
\usepackage{amsmath,amscd}
\usepackage{amsmath}
\usepackage{hyperref}
\hypersetup{
    colorlinks,
    linkcolor={red!50!black},
    citecolor={blue!50!black},
    urlcolor={blue!80!black}
}

\usepackage[pdftex]{graphicx}
\usepackage{lmodern}

\usepackage[bottom=3cm, top=3cm, left=3.5cm, right=3.5cm]{geometry}

\usepackage{mathtools}
\mathtoolsset{showonlyrefs,showmanualtags} 

\usepackage[foot]{amsaddr}
\usepackage[alphabetic, abbrev]{amsrefs}

\usepackage{enumitem}
\setlist[itemize]{leftmargin=1cm}


\usepackage{xcolor}

\theoremstyle{plain}
\newtheorem{theo}{Theorem}

\newtheorem{cor}[theo]{Corollary}
\newtheorem{lem}[theo]{Lemma}
\newtheorem{prop}[theo]{Proposition}

\theoremstyle{remark}
\newtheorem{rem}[theo]{Remark}

\theoremstyle{definition} 
\newtheorem{de}[theo]{Definition}

\DeclareMathOperator{\grad}{\mathrm{grad}}

\newcommand{\M}{M}
\newcommand{\T}{\mathsf{T}}
\newcommand{\SR}{\mathit{SR}}
\newcommand{\distr}{\Delta}
\newcommand{\interv}{I}
\newcommand{\eps}{\varepsilon}
\newcommand{\hc}{\zeta}
\newcommand{\geo}{\gamma}

\newcommand{\rd}{\varrho}
\newcommand{\rvf}{\Gamma}

\setcounter{tocdepth}{1}

\author{Davide Barilari} 
\email{barilari@math.unipd.it}
\address{Dipartimento di Matematica ``Tullio Levi-Civita'', Universit\'a degli Studi di Padova, Padova, Italy}
\author{Mathieu Kohli}
\email{mathieu.kohli@orange.fr}
\address{CMAP, \'Ecole Polytechnique, Palaiseau, France}
\date{\today}
\title[Geodesic curvature in 3D sub-Riemannian geometry]{On sub-Riemannian geodesic curvature \\ in dimension three}

\begin{document}
\begin{abstract} We introduce a notion of geodesic curvature $k_{\hc}$ along a smooth horizontal curve $\hc$ in a three-dimensional contact sub-Riemannian manifold, measuring how much a horizontal curve is far from being a geodesic. 
We show that the geodesic curvature appears as the first corrective term in the Taylor expansion of the sub-Riemannian distance between two  points on a unit speed horizontal curve
\begin{equation*} 
d_{\SR}^2( \hc(t),\hc(t+\eps))=\eps^2-\frac{k_{\hc}^2(t)}{720} \eps^6 +o(\eps^{6}).
\end{equation*}
The sub-Riemannian distance is not smooth on the diagonal, hence the result contains the existence of such an asymptotics. This can be seen as a higher-order differentiability property of the sub-Riemannian distance along smooth horizontal curves. It generalizes the previously known results on the Heisenberg group.
\end{abstract}
\maketitle
\tableofcontents

\section{Introduction}


It is a classical result in Riemannian geometry that a smooth curve $\hc:I\to M$ on a Riemannian manifold $(M,g)$ is a geodesic if and only if $\nabla^{g}_{\dot \hc} \dot \hc=0$, where $\nabla^{g}$ denotes the Levi-Civita connection. 
Indeed, if a smooth curve $\hc:I\to M$ is parametrized by length, the quantity $$\kappa^{g}_{\hc}=\|\nabla^{g}_{\dot \hc} \dot \hc\|$$ is called the \emph{geodesic curvature} of  $\hc$ and quantifies how much  the curve $\hc$ is far from being a geodesic. More precisely, let us denote by $d$ the Riemannian distance on $(M,g)$. A smooth curve  $\hc:I\rightarrow \M $ parametrized by arc length is a geodesic if and only if it satisfies $d^2( \hc(t),\hc(t+\eps))=\eps^2$ for all $t$ and for $\eps>0$ small enough. Indeed one can prove the following asymptotic expansion: for every $t\in I$, when $\eps\to 0$
\begin{equation} \label{eq:mainr}
d^2( \hc(t),\hc(t+\eps))=\eps^2-\frac{\kappa^{g}_{\hc}(t)^{2}}{12} \eps^4 +o(\eps^{4}).
\end{equation}
This formula provides a purely metric interpretation of the geodesic curvature and actually could serve as a definition of the latter.

It is natural to ask whether a similar result holds in the setting of sub-Riemannian geometry, where a notion of geodesic curvature has been up to now only investigated, to our best knowledge, in the case of the Heisenberg group. In \cite{diniz2016gauss}, \cite{balogh2017intrinsic} a notion of geodesic curvature has been introduced in the context of a sub-Riemannian Gauss-Bonnet-like theorem. In \cite{chiu2,chiu1} the authors find complete invariants for regular curves (not only horizontal) in the Heisenberg groups. Finally, in \cite{MK19}, the second named author proves that a formula analogue to \eqref{eq:mainr} holds in the sub-Riemannian Heisenberg group, using an explicit expression for the sub-Riemannian distance. An interesting remark is that the correction one finds is at higher order with respect to the Riemannian case. 
 
The main goal of this paper is to prove the analogue of expansion \eqref{eq:mainr} in 3D contact sub-Riemannian geometry, stated in Theorem~\ref{t:main} below for \emph{smooth} horizontal curves. We stress that the existence of such an asymptotic is a priori not guaranteed, since the sub-Riemannian distance is not smooth on the diagonal. Let us mention that the study of higher-order asymptotics of the sub-Riemannian distance along geodesics have been performed in \cite{agrachev2018curvature} to extract a notion of ``sectional curvature'' of a sub-Riemannian manifold.



\subsection{Statements of the results}
Let $M$ be a three-dimensional (3D) contact sub-Riemannian manifold and let $\hc:I\to M$ be a smooth horizontal curve parametrized by arc length. In what follows $I$ always denotes an open interval containing zero.

We associate with every smooth horizontal curve $\hc$ its characteristic deviation $h_\hc : I\rightarrow \mathbb{R}$. In terms of the Tanno (or Tanaka-Webster) connection $\nabla$, we set: 
\begin{align} \label{eq:hzetag}
 h_{\hc}&=g(\nabla_{\dot \hc}\dot \hc,J\dot \hc),
\end{align}
where $J$ is the almost complex structure defined on the distribution by the contact structure (we refer to Section~\ref{s:prel} for precise definitions).

The characteristic deviation characterizes horizontal curves with given initial position and velocity.
\begin{prop}\label{p:char} \label{p:1}Let $\M$ be a 3D contact sub-Riemannian structure.
If $\hc_1,\hc_2: I\rightarrow \M $ are two smooth horizontal curves parametrized by arc length such that:
\begin{itemize}
\item[(i)] $\hc_1(0)=\hc_2(0)$ and $\dot \hc_1(0)=\dot \hc_2(0)$,
\item[(ii)] $h_{\hc_1}(t)=h_{\hc_2}(t)$ for every $t\in I$.
\end{itemize}
then $\hc_1(t)=\hc_2(t)$ for every $t\in I$.
\end{prop}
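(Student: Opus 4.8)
The plan is to show that the defining condition for the characteristic deviation collapses to a well-posed \emph{first-order} ODE for the pair (position, velocity), and then to invoke uniqueness for ODEs. First I would record the key structural identity. Since the Tanno connection $\nabla$ is metric ($\nabla g=0$), parallelizes the complex structure ($\nabla J=0$) and preserves the horizontal distribution $\distr$ (equivalently, the Reeb field is $\nabla$-parallel), I can compute the covariant acceleration of a unit-speed horizontal curve. Because $\hc$ is horizontal and $\nabla$ preserves $\distr$, the vector $\nabla_{\dot\hc}\dot\hc$ is again horizontal. Differentiating the arc-length constraint $g(\dot\hc,\dot\hc)\equiv 1$ and using $\nabla g=0$ gives $g(\nabla_{\dot\hc}\dot\hc,\dot\hc)=0$, so $\nabla_{\dot\hc}\dot\hc$ lies in the $g$-orthogonal complement of $\dot\hc$ inside the two-dimensional space $\distr_{\hc(t)}$. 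As $J\dot\hc$ spans that complement and $g(J\dot\hc,J\dot\hc)=g(\dot\hc,\dot\hc)=1$, this forces
\begin{equation*}
\nabla_{\dot\hc}\dot\hc = h_{\hc}\, J\dot\hc .
\end{equation*}
Thus the entire covariant acceleration is encoded by the single scalar $h_\hc$.

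Next I would turn this into a first-order system. Working in a local adapted orthonormal frame $(e_1,e_2)$ of $\distr$ with $e_2=Je_1$, complemented by the Reeb field, the compatibility properties force the connection form on $\distr$ to be $\mathfrak{so}(2)$-valued: $\nabla e_1=\omega\otimes e_2$, $\nabla e_2=-\omega\otimes e_1$ for a single local $1$-form $\omega$. Writing $\dot\hc=\cos\theta\,e_1+\sin\theta\,e_2$ for an angle function $\theta(t)$, a direct computation using $\nabla J=0$ yields $\nabla_{\dot\hc}\dot\hc=(\dot\theta+\omega(\dot\hc))\,J\dot\hc$, so the identity above reads $\dot\theta = h_\hc(t)-\omega(\dot\hc)$. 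Together with $\dot\hc=\cos\theta\,e_1+\sin\theta\,e_2$, this is a first-order ODE in the local coordinates $(x,\theta)$ on the unit bundle $U\distr=\{v\in\distr:\ g(v,v)=1\}$, with smooth (hence locally Lipschitz) right-hand side, into which $h_\hc(t)$ enters only as a continuous time-dependent forcing. Invariantly, the lift $t\mapsto\dot\hc(t)\in U\distr$ solves
\begin{equation*}
\frac{d}{dt}\dot\hc(t)=H(\dot\hc(t))+h_\hc(t)\,V(\dot\hc(t)),
\end{equation*}
where $H$ is the horizontal lift of the tautological vector field (the $\nabla$-geodesic spray, corresponding to $h_\hc\equiv 0$) and $V$ is the generator of the fiberwise rotation.

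Finally I would conclude by uniqueness. Hypotheses (i)--(ii) say precisely that $\dot\hc_1$ and $\dot\hc_2$ are integral curves of the \emph{same} time-dependent vector field on $U\distr$ sharing the initial value $\dot\hc_1(0)=\dot\hc_2(0)$; by the Cauchy--Lipschitz theorem they coincide on their common interval, and projecting to $\M$ gives $\hc_1\equiv\hc_2$ there. Since the adapted frame is only local, I would promote this to all of $I$ by a standard connectedness argument: the set $\{t\in I:\ \hc_1(t)=\hc_2(t),\ \dot\hc_1(t)=\dot\hc_2(t)\}$ is nonempty (it contains $0$), closed by continuity, and open by the local uniqueness just established, hence equals the connected interval $I$.

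The only genuinely delicate point is the first step---verifying that the defining condition reduces to the clean first-order identity $\nabla_{\dot\hc}\dot\hc=h_\hc\,J\dot\hc$, which rests entirely on the compatibility properties of the Tanno connection with $g$, $J$ and $\distr$. Once this is in hand, the passage to an ODE on $U\distr$ and the uniqueness argument are routine.
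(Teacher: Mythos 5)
Your proposal is correct and follows essentially the same route as the paper: both curves satisfy the same Cauchy problem $\nabla_{\dot\hc}\dot\hc=h(t)\,J\dot\hc$ with identical initial data, and uniqueness for ODEs gives the conclusion. The paper invokes this uniqueness directly, while you make the reduction to a first-order system on the unit horizontal bundle explicit via the angle coordinate (which is precisely the content of the paper's Lemma~\ref{lemma10}); this is just filling in standard detail, not a different argument.
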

A particular case of three-dimensional contact sub-Riemannian structure $M$ is given by isoperimetric problems over a two-dimensional Riemannian manifold $N$ (cf.\ Section~\ref{s:isoper} for more details). In this case every horizontal curve $\hc$ on $M$ is the lift of a unique curve on the underlying Riemannian manifold $N$ and the characteristic deviation is the geodesic curvature of $\pi_{N}(\hc)$, where $\pi_{N}:M\to N$ is the projection. In this case Proposition~\ref{p:char} has a clear geometric interpretation. 

The characteristic deviation is not the correct quantity describing whether a curve is a sub-Riemannian geodesic or not. 
Recall that a  horizontal curve $\hc:I\rightarrow \M $ is a \emph{geodesic} if short arcs of $\hc$ realize the sub-Riemannian distance between its endpoints. The geodesic curvature $k_\hc : I\rightarrow \mathbb{R}$ of a horizontal curve $\hc:I\rightarrow \M $ parametrized by length is defined by
$$ k_{\hc}=  \frac{d}{dt}g(\nabla_{\dot \hc}\dot \hc,J\dot \hc) -g(T(X_{0},\dot \hc),\dot \hc),$$
where $X_{0}$ is the Reeb vector field associated with the contact structure, and $T$ is the torsion associated to $\nabla$ (we refer to Section~\ref{s:prel} for precise definitions).
\begin{prop}\label{propcharacterizationgeod}
Let $\M$ be a three-dimensional contact sub-Riemannian  structure and let $\hc:I\rightarrow \M $ be a smooth horizontal curve parametrized by arc length. Then $\hc$ is a geodesic if and only if $k_{\hc}(t)=0 $ for every $t\in I$. 
\end{prop}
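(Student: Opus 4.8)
The plan is to characterize geodesics through the Pontryagin Maximum Principle and then match the resulting Hamiltonian system with the two quantities $h_{\hc}$ and $k_{\hc}$. Since $\M$ is contact, the distribution is bracket-generating in one step and its only abnormal extremals are constant; hence a nonconstant arc-length horizontal curve is a geodesic (its short arcs realize the distance) if and only if it is locally the projection of an integral curve of the sub-Riemannian Hamiltonian vector field $\vec H$ on $T^*\M$. I would therefore work with the fibrewise-linear momentum functions $h_i(\lambda)=\langle\lambda,X_i\rangle$, $i=0,1,2$, attached to an orthonormal frame $X_1,X_2=JX_1$ of $\distr$ completed by the Reeb field $X_0$ of the contact form $\eta$, so that $H=\tfrac12(h_1^2+h_2^2)$ and (up to the sign convention) $\{h_i,h_j\}=h_{[X_i,X_j]}$. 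The first step is to write Hamilton's equations $\dot h_i=\{H,h_i\}$ along a candidate lift of $\hc$.

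Next I would fix the geometry of the curve. Writing $\dot\hc=\cos\theta\,X_1+\sin\theta\,X_2$ for an angle function $\theta(t)$, the properties $\nabla g=0$ and $\nabla J=0$ of the Tanno connection give $\nabla_{\dot\hc}X_1=\beta X_2$, $\nabla_{\dot\hc}X_2=-\beta X_1$ for a scalar $\beta$ (the horizontal connection $1$-form evaluated on $\dot\hc$), whence $\nabla_{\dot\hc}\dot\hc=(\dot\theta+\beta)J\dot\hc$ and $h_{\hc}=\dot\theta+\beta$. Expressing $\beta$ through the structure functions of the frame by taking the horizontal part of the torsion identity $T(X_1,X_2)=\nabla_{X_1}X_2-\nabla_{X_2}X_1-[X_1,X_2]$ (which is purely vertical, equal to $d\eta(X_1,X_2)X_0$), I obtain $h_{\hc}=\dot\theta-(c^1_{12}\cos\theta+c^2_{12}\sin\theta)$, where $[X_1,X_2]=X_0+c^1_{12}X_1+c^2_{12}X_2$. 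Inserting $h_1=\cos\theta$, $h_2=\sin\theta$ into the two horizontal Hamilton equations, one checks that both collapse to the single relation $\dot\theta=h_0+c^1_{12}\cos\theta+c^2_{12}\sin\theta$; comparing this with the formula for $h_{\hc}$ shows that a lift can exist only if its vertical momentum is \emph{forced} to be $h_0=h_{\hc}$.

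It then remains to impose the vertical Hamilton equation $\dot h_0=\{H,h_0\}$ for this forced lift. The decisive computation is that, along the curve, $\{H,h_0\}=h_1\,h_{[X_1,X_0]}+h_2\,h_{[X_2,X_0]}$ coincides with $g(T(X_0,\dot\hc),\dot\hc)$: the Reeb property $\iota_{X_0}d\eta=0$ makes each $[X_i,X_0]$ horizontal, so no $h_0$ survives and the bracket depends only on $\hc$, while the symmetric combination of $T(X_0,X_1)$ and $T(X_0,X_2)$ entering $g(T(X_0,\dot\hc),\dot\hc)$ cancels the contribution of the connection $1$-form, leaving exactly the structure-function expression produced by $\{H,h_0\}$. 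Granting this identity, $\hc$ is a geodesic if and only if the forced lift $(\cos\theta,\sin\theta,h_{\hc})$ solves all of Hamilton's equations, that is, iff $\dot h_{\hc}=g(T(X_0,\dot\hc),\dot\hc)$, which is precisely $k_{\hc}\equiv0$. I expect the main obstacle to be this last identity: it requires tracking the Poisson-bracket sign conventions together with the defining torsion relations of the Tanno connection, and verifying that the Reeb and $J$-compatibility properties produce exactly the two cancellations (of $h_0$ and of the connection form) on which the whole equivalence rests.
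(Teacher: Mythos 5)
Your proposal is correct and follows essentially the same route as the paper: both characterize geodesics as projections of integral curves of $\vec H$ (using the absence of nontrivial abnormals in the contact case), observe that the horizontal Hamilton equations force the vertical momentum to equal the characteristic deviation, $h_{X_0}=h_{\hc}$, and then identify the remaining vertical equation $\dot h_{X_0}=\eta(\dot\hc)$ with the condition $k_{\hc}\equiv 0$. The only difference is cosmetic: the paper works in the curve-adapted frame $\{\T,J\T\}$ so that $h_{\T}=1$, $h_{J\T}=0$ along the lift, whereas you use a fixed orthonormal frame and the angle $\theta$; your final identity $\{H,h_0\}=g(T(X_0,\dot\hc),\dot\hc)$ is exactly the paper's relation $\eta(X)=g([X,X_0],X)$ from its Remark~\ref{r:trace0} and \eqref{eq:etaiota1}, and it checks out.
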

For a given geodesic $\hc:I\rightarrow \M $ and every $t\in I$ one has $d^2( \hc(t),\hc(t+\eps))=\eps^2$ for $\eps>0$ small enough.
The main result of this paper is to show that the geodesic curvature appears as a corrective term.
\begin{theo} \label{t:main}
Let $\M$ be a three-dimensional contact sub-Riemannian  structure and let $\hc:I\rightarrow \M $ be a smooth horizontal curve parametrized by arc length. Then for every fixed $t\in I$, we have the expansion for $\eps\to 0$
\begin{equation} \label{eq:ordine6}
d_{\SR}^2( \hc(t),\hc(t+\eps))=\eps^2-\frac{k_{\hc}^2(t)}{720} \eps^6 +o(\eps^{6}).
\end{equation}
\end{theo}
Notice the qualitative behavior  with respect to what happens in the Riemannian case in \eqref{eq:mainr}. Indeed, in the Taylor expansion the corrective term due to the curvature appears at order 6 while in the Riemannian case it appears at order 4 (cf.\ also \cite{MK19} for the case of the Heisenberg group).

Some preliminary investigations in the higher-dimensional contact case are discussed in \cite{MKphd}, showing that the fact that the first corrective term appearing in \eqref{eq:ordine6} is at order 6 is related to the distribution being two-dimensional.

\subsection{Strategy of the proof of Theorem~\ref{t:main}} 

Fix $p\in M$ and consider the cut locus $\mathrm{cut}(p)\subset M$, the set of points where the function $d_{\SR}^{2}(p,\cdot)$ is not smooth. We have $p\in \mathrm{cut}(p)$ but  $M\setminus \mathrm{cut}(p)$ is an open dense set in $M$. We refer to Section~\ref{s:distcut} for more details on the cut locus. 
On $M\setminus \mathrm{cut}(p)$ it is well-defined the radial vector field $\rvf$ as follows: for $q\in M\setminus \mathrm{cut}(p)$ the vector $\rvf(q)$ is the tangent vector at $q$ to the geodesic joining $p$ and $q$ in time 1.

Consider now $\hc :I\rightarrow M$  a smooth horizontal curve parametrized by arc length on an open interval $I$ containing $0$ and such that $\hc(0)=p$. For all $t\in I$ such that $\hc(t)\notin \mathrm{cut}(p)$ we can decompose the velocity vector of $\hc$ as follows
\begin{equation}\label{deftheta}
\hc'(t)=\cos( \theta(t) )\rvf( {\hc}(t) )  + \sin( \theta(t) ) J\rvf ( {\hc}(t) ).
\end{equation}
where $\theta$ takes values in $S^1$ and is defined on a subset of $I\setminus \{0\}$. 

We stress that the vector fields $\Gamma$ and $J\Gamma$ are singular at $p$, hence the regularity of the function $t\mapsto \theta(t)$ at $t=0$ is not guaranteed.

The proof consists in the following facts:
\begin{itemize}
\item[(a)]  $\hc(t)\notin \mathrm{cut}(p)$ for all $t\in I\setminus \{0\}$ (by restricting $I$, if needed)
\item[(b)] the function $\theta:I\setminus \{0\}\to S^{1}$ can be extended to $I$ continuously with $\theta (0)=0$
\item[(c)]  such a function $\theta:I \to S^{1}$ is of class $C^{2}$ in a neighborhood of $0$
\end{itemize}

With the $C^{2}$ regularity of the function $\theta$, it  is not difficult to obtain a Taylor expansion of the distance along the curve $\hc$ in terms of the derivatives of $\theta$. The final step is to recover the geometric invariant in terms of of $\theta$.


\subsection{Structure of the paper} After some preliminaries in Section 2,  in Section 3 and 4 we introduce characteristic deviation and geodesic curvature in 3D contact SR geometry. 
Sections 6 and 7 contain the main results of the paper. The Appendix contains some technical lemmas and a self-contained discussion on Jacobi fields.
\subsection*{Acknowledgments} The authors wish to thank Luca Rizzi for useful discussions.
This work was supported by the Grant ANR-15-CE40-0018 ``Sub-Riemann\-ian Geometry and Interactions'' of the French ANR.

\section{Preliminaries} \label{s:prel}

All along the paper,  $\M$  denotes a three dimensional smooth manifold. We endow $\M$ with  a contact form $\omega$, which is a smooth differential $1$-form such that $\omega\wedge d\omega \neq 0$.  

We define the \emph{distribution} $\distr:=\ker \omega$. Notice that $d\omega|_{\distr}$ is non-degenerate.  The choice of a smooth metric $g$ on $\distr$ defines a contact sub-Riemannian structure on $\M$. We denote by $\| \cdot \|$ the norm on $\distr$ associated with the metric $g$.

Notice that if $f$ is a smooth non-vanishing scalar function $f$, then $f\omega$ is also a contact form and $\ker (f\omega)=\distr$. Thus, it is not restrictive  to assume that $\omega$ is chosen in such a way that $d\omega|_{\distr}=-\mathrm{vol}_{\distr}$, where $\mathrm{vol}$ is the volume form associated with the metric $g$ on $\distr$. We assume this normalization is fixed in what follows.\footnote{The minus sign here is a convention. It is chosen in such a way that the Reeb vector field satisfies $[X_{1},X_{2}]=X_{0} \mod \distr$ for any directed orthonormal frame $\{X_{1},X_{2}\}$ of the distribution.}


%

A Lipschitz curve $\hc : I\rightarrow \M$ is said to be \emph{horizontal} if $\dot \hc(t)\in\distr_{\hc(t)}$ for a.e.\ $t\in I$. The length of a horizontal curve $\hc:I\rightarrow \M$ is given by 
\begin{align*}
\ell_{\SR}(\hc)=\int_{I}\| \dot \hc(s) \| \text{d}s.
\end{align*}
The sub-Riemannian distance is then defined as follows
\begin{align*}
d_{\SR}(x,y)=\inf \{\ell_{\SR}(\hc)\mid \hc :I\rightarrow \M \text{ horizontal curve joining $x$ and $y$}\}.
\end{align*}
Since the distribution is contact, $\distr$ is bracket-generating and every pair of points on a connected component of $\M$ can be joined by a horizontal curve of finite length. For more detailed introduction to sub-Riemannian geometry we refer to \cite{ABB}. 

Given a contact form $\omega$ on $\M$, the Reeb vector field $X_0$ is defined as the unique vector field satisfying $\omega(X_{0})=1$ and $d\omega(X_{0},\cdot)=0$.
Notice that $X_0$ is transverse to the distribution. We still denote by $g$ the unique extension of the sub-Riemannian metric to a Riemannian one such that $X_{0}$ is orthogonal to $\distr$ and of norm one.

The map
$
J:\distr \rightarrow \distr
$
is the linear endomorphism on $\distr$ that satisfies
\begin{align}\label{eq:gJ}
g( X,JY ) =\text{d}\omega ( X,Y ),
\end{align}
for every horizontal vector fields $X$ and $Y$.
Notice that $g(X,JX)=0$ for every $X$. Indeed $\{X,JX\}$ is a direct orthonormal frame for $\distr$, for every unit vector $X$ in $\distr$. The map $J$ can be  extended to $TM$ compatibly with \eqref{eq:gJ} by setting $J(X_{0})=0$.



Given any orthonormal frame of the distribution $\{ X_1 , X_2\}$, the triple $\{ X_0,X_1,X_2\}$ is a frame of $T\M$. It is easy to see that due to our normalization choice we have
\begin{align}\label{decompositionLieijk}
[ X_i, X_j ] =\sum_{k=0}^2 c_{i j}^{k} X_k.
\end{align}
where $c_{ij}^{k}$ are smooth functions satisfying
$c_{0i}^{0}=0$ for every $i=0,1,2$ and $c_{12}^{0}=1$.
\begin{rem} \label{rem:not}
Given three vector fields $X,Y,Z$, we set $c_{X,Y}^{Z}:=g([X,Y],Z)$. Notice that this is compatible with the above notation in the following sense: for $i,j,k=0,1,2$ we have $c_{X_{i},X_{j}}^{X_{k}}=c_{i  j}^{k}$.
\end{rem}

\section{The characteristic deviation}

In this section we introduce the characteristic deviation and we prove that smooth horizontal curves with given initial base point and velocity are uniquely characterized by their characteristic deviation, namely Proposition~\ref{p:1}.

We start by introducing the canonical connection on any 3D contact manifold, which is called Tanno connection \cite{tanno89}.
\begin{de}
 The Tanno connection
$
  \nabla$
 is the only linear connection on $M$ satisfying
\begin{itemize}
\item[(i)] $ \nabla \omega =0$,
\item[(ii)] $ \nabla X_0 =0$,
\item[(iii)] $\nabla g=0$,
\item[(iv)] $T(X,Y)=d\omega( X,Y)X_0$,  for $X,Y$ horizontal vector fields,
\item[(v)] $T( X_0, JX) = -JT( X_0,X )$,  for $X$ any vector field,
\end{itemize}
where $T$ denotes the torsion associated with the connection $\nabla$.
\end{de}
Notice that on 3D contact manifolds one automatically has $\nabla_{X}(JY)=J\nabla_{X}Y$, i.e., the contact structure is CR. Hence Tanno connection $\nabla$ coincides with Tanaka-Webster connection in this case (we refer to \cite{agrachev2017sub} for a discussion in a similar notation).
\begin{rem} \label{r:trace0} From the properties of the Tanno connection it is easy to show that for every horizontal vector field $X$, 
 one has $g([X_{0},X],X)=-g([X_{0},JX],JX)$. 

\smallskip
More precisely, one can show that $\tau(X):=T(X_{0},X)$ is horizontal, for every horizontal $X$. The map $\tau:\distr\to \distr$ is symmetric with respect to the sub-Riemannian inner product and has zero trace. 
\end{rem}
\subsection{The characteristic deviation}
 Let us consider a smooth horizontal curve
$
  \hc :\interv\rightarrow \M
$
parametrized by arc length. We say that a smooth vector field $\T $ is an extension of the velocity field of $\hc$ if for every $t\in\interv$ :
\begin{align*}
 \T ( \hc(t) )=\dot \hc(t).
\end{align*}

Notice that, since the vector field $\T $ is horizontal and  $\nabla \omega=0$, then $\nabla_{\T }\T $ is horizontal. Moreover $\nabla_{\T }\T $ it orthogonal to $\T $ since
 \begin{align*}
 g( \T ,\nabla_{\T }\T  ) =\frac{1}{2}\T g( \T ,\T  ) =0.
 \end{align*}
This permits to introduce the following definition. 
\begin{de}
\label{eqdiffTannoh0}
 Let $\hc :\interv\rightarrow \M$ be a smooth horizontal curve  parametrized by arc length and let $\T $ be a smooth horizontal and normalized vector field extending the velocity field of $\hc$. We define the \emph{characteristic deviation} $h_{\hc} :\interv\rightarrow \mathbb{R}$ by
 \begin{align}\label{eq:hc22}
\nabla_{\T }\T ( \hc(t) ) &=h_{\hc}(t)J\T ( \hc(t) ),\qquad \forall t\in \interv.
 \end{align}
\end{de}
It follows from the definition that this quantity does not depend on the extension $\T$. The choice of an extension is indeed not even necessary to write down equation \eqref{eq:hc22}, since one can define
 \begin{align}
\nabla_{\dot \hc(t) }\dot \hc(t)  &=h_{\hc}(t)J\dot\hc(t) ,\qquad \forall t\in \interv.
 \end{align}
It will be convenient in what follows to use the notation \eqref{eq:hc22} to work with vector fields defined on $M$, and remember that the characteristic deviation is independent on the extension.
\begin{lem}
The characteristic deviation can be expressed as follows
\begin{align} \label{eq:hzetag}
 h_{\hc}(t)&=g_{\hc(t)}([J\T ,\T ],\T )=-c_{\T ,J\T }^{\T }( \hc(t) ).
\end{align}
where the last identity is understood in the sense of Remark~\ref{rem:not}.
\end{lem}
\begin{proof} 
Using the properties of the connection $\nabla$, we can compute 
 \begin{align*}
 -c_{\T ,J\T }^{\T } &=-g( [\T ,J\T ], \T  )=-g( \nabla_{\T }J\T  -\nabla_{J\T }\T , \T  )\\
 &=-g( \nabla_{\T }J\T  , \T  ) +\frac{1}{2}J\T g( \T ,\T  )
 =g( J\T ,\nabla_{\T }\T   )
 \end{align*}
 where we used that the torsion of horizontal vector fields is vertical and the identities $\nabla_{\T }J\T =J\nabla_{\T} \T$, $J^{2}=-1$ and $J\T g( \T ,\T  )=0$. 
\end{proof}
The characteristic deviation can be easily computed if one decomposes the tangent vector to $\hc$ along an orthonormal frame. 
\begin{lem} \label{lemma10} Let $\hc :I\to  M $
be a smooth horizontal curve parametrized by arc length and let us write
\begin{equation}\label{nuova1}
\T=\cos( \theta )X_{1} +\sin( \theta )X_{2}.
\end{equation}
with respect to some orthonormal frame $X_{1},X_{2}$. Then along $\hc(t)$ we have the identity
\begin{equation} \label{eq:hzetath}
 h_{\hc}=\dot \theta -c_{12}^{1}\cos \theta -c_{12}^{2}\sin \theta.
\end{equation}
where $c_{ij}^{k}$ are defined as in \eqref{decompositionLieijk} and $\dot \theta$ denotes the derivative of $\theta$ along $\hc$.
\end{lem}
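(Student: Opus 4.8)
The plan is to start from the Lie-bracket expression for the characteristic deviation obtained in the previous lemma, namely $h_{\hc} = -c_{\T,J\T}^{\T} = -g([\T,J\T],\T)$, and to substitute the decomposition \eqref{nuova1} into it. Since $\{X_1,X_2\}$ is a directed orthonormal frame, the normalization fixing $J$ gives $JX_1 = X_2$ and $JX_2 = -X_1$, so that $J\T = -\sin\theta\, X_1 + \cos\theta\, X_2$. The whole proof is then a computation of $g([\T,J\T],\T)$ in terms of the structure functions $c_{12}^k$.

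First I would expand $[\T,J\T]$ by bilinearity together with the Leibniz identity $[fX,gY] = fg[X,Y] + f(Xg)Y - g(Yf)X$. The purely structural contribution is $(\cos^2\theta+\sin^2\theta)[X_1,X_2] = [X_1,X_2]$, while the remaining terms involve derivatives of $\cos\theta$ and $\sin\theta$ along $X_1$ and $X_2$. Using $X_i(\cos\theta) = -\sin\theta\,(X_i\theta)$ and $X_i(\sin\theta) = \cos\theta\,(X_i\theta)$, and again invoking $\cos^2\theta+\sin^2\theta = 1$, these derivative terms collapse to exactly $-(X_1\theta)X_1 - (X_2\theta)X_2$. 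Thus one obtains $[\T,J\T] = [X_1,X_2] - (X_1\theta)X_1 - (X_2\theta)X_2$.

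Next I would pair this with $\T$. Writing $[X_1,X_2] = X_0 + c_{12}^1 X_1 + c_{12}^2 X_2$ (recall that $c_{12}^0 = 1$), the $X_0$ component drops out because $\T$ is horizontal and $X_0\perp\distr$, leaving $g([X_1,X_2],\T) = c_{12}^1\cos\theta + c_{12}^2\sin\theta$. For the derivative part, $g((X_1\theta)X_1 + (X_2\theta)X_2,\T) = \cos\theta\,(X_1\theta) + \sin\theta\,(X_2\theta) = \T(\theta) = \dot\theta$, the final equality being precisely the definition of the derivative of $\theta$ along $\hc$. Collecting, $g([\T,J\T],\T) = c_{12}^1\cos\theta + c_{12}^2\sin\theta - \dot\theta$, and the sign in $h_{\hc} = -g([\T,J\T],\T)$ yields \eqref{eq:hzetath}.

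This is essentially bookkeeping, so I do not expect a serious conceptual obstacle; the only point requiring care is the cancellation in the derivative terms, where the cross contributions mixing $X_1\theta$ and $X_2\theta$ must cancel while the diagonal ones combine through the Pythagorean identity. The one genuinely clarifying step is recognizing $\cos\theta\,(X_1\theta) + \sin\theta\,(X_2\theta)$ as $\dot\theta = \T(\theta)$, which is what ties the frame-dependent computation back to the intrinsic derivative along the curve and explains why the final answer is manifestly independent of the chosen extension.
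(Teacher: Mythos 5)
Your proof is correct and follows essentially the same route as the paper: the paper packages the Leibniz expansion of $[\T,J\T]$ into the identity $[\T,J\T]=[X_1,X_2]-\grad_{\SR}(\theta)$ with $\grad_{\SR}(\theta)=(X_1\theta)X_1+(X_2\theta)X_2$, which is exactly the bracket computation you carry out by hand, and then pairs with $\T$ as you do. The only implicit point, which you handle correctly, is that the frame must be directed so that $J X_1=X_2$ and $c_{12}^0=1$, as the paper's normalization assumes.
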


The proof is a direct computation which makes use of the following observation: if $X,Y$ are two smooth horizontal normalized vector fields such that
 \begin{align*}
Y=\cos( \psi )X +\sin( \psi )JX,
\end{align*}
for some smooth function
$
  \psi :\M\rightarrow S^{1}
$, then
\begin{align}\label{nuova2}
 [ Y,JY ]&=[ X,J X ] -\grad_{\SR}(\psi).
\end{align}
Here we denote by $\grad \psi$ the \emph{horizontal gradient}, i.e., the horizontal vector field such that 
$ \text{d}\psi(X) =g(\grad_{\SR}(\psi) ,X )$ for any  horizontal $X$.
It is easy to check that
 \begin{align}\label{expressiongrandient}
 \grad_{\SR}(\psi)=( X_1\psi )X_1 +( X_2 \psi )X_2.
 \end{align}
    for any orthonormal frame $ X_1,X_2 $ of the distribution.
\begin{proof}[Proof of Lemma~\ref{lemma10}]
Using \eqref{nuova1} and \eqref{nuova2} we have that
\[
[\T,J\T]=[X_{1},X_{2}]-\grad_{\SR}(\theta)=(c_{12}^{1}X_{1}+c_{12}^{2}X_{2}+X_{0})-(X_{1}\theta) X_{1}-(X_{2}\theta) X_{2}
\]
hence by definition of characteristic deviation \eqref{eq:hzetag} we have
\[
h_{\hc}=-g([\T,J\T],\T)=-(c_{12}^{1}\cos \theta+c_{12}^{2}\sin \theta)+(X_{1}\theta) \cos \theta+(X_{2}\theta) \sin \theta
\]
and formula \eqref{eq:hzetath} follows.
\end{proof}

\subsection{Two directional invariants}
We denote by $S\M$ the spherical horizontal bundle, i.e., the set of unit vectors in the distribution. Let us define the two tensors $\eta,\iota : SM \rightarrow \mathbb{R}$ by
\begin{align} \label{eq:etaiota}
 \eta( X )=g(\tau(X),X),\qquad 
  \iota( X)=g(\tau(X),JX).
\end{align}

\begin{lem} \label{etaiota} For $\theta\in S^1$ and $X\in SM$,  set 
$
 X_{\theta}:=\cos( \theta )X +\sin( \theta )JX
$.
Then 
\begin{align*}
\eta(X_{\theta})&=\cos(2\theta)\eta(X)+\sin(2\theta) \iota(X),\quad \iota(X_{\theta})=-\sin(2\theta)\eta(X)+\cos(2\theta) \iota(X). 
\end{align*}
\end{lem}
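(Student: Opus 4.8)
The statement concerns how the two directional invariants $\eta$ and $\iota$ transform under a rotation of the unit vector $X$ by angle $\theta$ in the distribution. Here $\tau = T(X_0,\cdot)$ is the symmetric, trace-free endomorphism of $\Delta$ from Remark~\ref{r:trace0}, and $\{X, JX\}$ is an orthonormal frame. My plan is to exploit the fact that $\{X, JX\}$ is an orthonormal basis of the $2$-dimensional fiber $\Delta_q$, so $\tau$ is represented there by a $2\times 2$ symmetric trace-free matrix, and then the transformation formulas are simply the effect of conjugating that matrix by the rotation $R_\theta$ sending $X\mapsto X_\theta$.

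Let me write this out. First I would record the matrix of $\tau$ in the basis $\{X, JX\}$. By definition $\eta(X) = g(\tau X, X)$ and $\iota(X) = g(\tau X, JX)$. Symmetry of $\tau$ gives $g(\tau X, JX) = g(X, \tau JX) = g(\tau(JX), X)$, and trace-freeness gives $g(\tau(JX), JX) = -g(\tau X, X) = -\eta(X)$. Hence in the basis $\{X, JX\}$,
\begin{equation*}
\tau \sim \begin{pmatrix} \eta(X) & \iota(X) \\ \iota(X) & -\eta(X) \end{pmatrix}.
\end{equation*}
This is the crux: a symmetric trace-free matrix in two dimensions is determined by the two numbers $(\eta,\iota)$.

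Next I would simply compute $\eta(X_\theta) = g(\tau X_\theta, X_\theta)$ and $\iota(X_\theta) = g(\tau X_\theta, J X_\theta)$ directly. Since $J$ acts as rotation by $\pi/2$ on the distribution (because $\{X,JX\}$ is a direct orthonormal frame and $J^2 = -1$), one has $J X_\theta = -\sin(\theta)X + \cos(\theta)JX$. Expanding $\tau X_\theta = \cos\theta\,\tau X + \sin\theta\,\tau(JX)$ using the matrix above and taking inner products with $X_\theta$ and $JX_\theta$, the products of $\cos\theta,\sin\theta$ reassemble into the double-angle quantities $\cos(2\theta) = \cos^2\theta - \sin^2\theta$ and $\sin(2\theta) = 2\sin\theta\cos\theta$, yielding precisely the claimed formulas. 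Conceptually, $(\eta,\iota)$ transforms as a spin-$2$ (quadratic differential) object, so rotating the frame by $\theta$ rotates the pair $(\eta,\iota)$ by $2\theta$.

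I do not expect any genuine obstacle here; the only point requiring a little care is verifying the sign conventions, namely that $JX_\theta = -\sin\theta\,X + \cos\theta\,JX$ with the normalization fixed in Section~\ref{s:prel}, and that the symmetry-plus-trace-free properties of $\tau$ established in Remark~\ref{r:trace0} are invoked correctly so that the off-diagonal entries coincide and the diagonal entries are opposite. Once the matrix representation is in place, the result is a one-line double-angle computation.
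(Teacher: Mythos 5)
Your proof is correct and follows essentially the same route as the paper: both arguments reduce to bilinearity together with the two identities $g(\tau(JX),JX)=-g(\tau(X),X)$ and $g(\tau(X),JX)=g(\tau(JX),X)$ from the symmetry and trace-freeness of $\tau$, followed by the double-angle formulas. Packaging this as conjugation of the symmetric trace-free matrix $\bigl(\begin{smallmatrix}\eta & \iota\\ \iota & -\eta\end{smallmatrix}\bigr)$ by $R_\theta$ is only a presentational difference, and your identity $JX_\theta=-\sin\theta\,X+\cos\theta\,JX$ (from $J^2=-1$) is the correct ingredient for the $\iota$ computation that the paper leaves as ``similarly''.
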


\begin{proof} \label{appelem}
Since $\tau$ is traceless and is symmetric, we have for every $X$ horizontal 
\begin{equation}\label{eq:tracesym}
g(\tau(X),X)+g(\tau(JX),JX)=0,\qquad g(\tau(X),JX) = g(\tau(JX),X).
\end{equation}
Then using bilinearity and \eqref{eq:tracesym} one has
\begin{align*}
\eta(X_{\theta})=g(\tau(X_{\theta}),X_{\theta})
&=\cos^{2}(\theta) g(\tau(X),X)+\sin^{2}(\theta) g(\tau(JX),JX)\\
&\qquad +\cos \theta \sin \theta \left( g(\tau(X),JX) + g(\tau(JX),X) \right)\\
&=\cos(2\theta)g(\tau(X),X)+\sin(2\theta) g(\tau(X),JX) \\
&=\cos(2\theta)\eta(X)+\sin(2\theta) \iota(X) 
\end{align*}
Similarly one obtains
$
\iota(X_{\theta})=-\sin(2\theta)\eta(X)+\cos(2\theta) \iota(X) 
$, proving the lemma.
\end{proof}


\begin{rem} \label{rem:chichi} From Lemma~\ref{etaiota} one can see the following fact: the Reeb vector field $X_{0}$ is a Killing vector field for the sub-Riemannian structure if and only if both (or equivalently, one among) $\eta$ and $\iota$ vanishes for all horizontal unit vectors. 
Indeed 
\begin{align}\label{eq:sumsq}
\|\tau(X)\|^{2}=\eta(X)^2 +\iota(X)^2
\end{align}
Moreover we have $\|\tau(X)\|=\chi(x)$ for all $x$ in $\M$, and $X\in S_{x}M$, where $\chi(x)$ is the local invariant, as discussed in \cite{agrachev2012sub} (see also \cite{agrachev1996exponential}). In particular the right hand side of \eqref{eq:sumsq} is independent of the unit vector $X$ and depends only on the base point $x$. Moreover we stress that $|\eta(X)|\leq \chi(x)$ for every $X$ unitary and based at $x$, hence $\eta$ is locally bounded on $SM$ since $\chi$ is a smooth function in $M$.

\end{rem}
\begin{rem}
Notice that one can rewrite \eqref{eq:etaiota} as
\begin{align} \label{eq:etaiota1}
 \eta( X )=g([X,X_{0}],X),\quad
  \iota( X)=\frac12\left(g([JX,X_{0}],X)+g([X,X_{0}],JX)\right).
\end{align}
\end{rem}

\subsection{Existence and uniqueness: proof of Proposition~\ref{p:1}}
We now show that there exists a unique horizontal curve with assigned characteristic deviation, for a given initial point and velocity. This will prove Proposition \ref{p:1}.
\begin{prop} Let $\M$ be a complete 3D sub-Riemannian contact structure.
Given $x\in M$, a unit vector $v\in \distr_{x}$ and a smooth function
$ \varphi: \interv\to \mathbb{R}$,
there exists a unique smooth horizontal curve $\hc :\interv\to  \M$ parametrized by arc length such that $\hc(0)=x$, $\dot \hc(0)=v$, and $h_\hc(t)=\varphi(t)$ for all $t\in \interv$.
\end{prop}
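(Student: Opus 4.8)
The plan is to recast the prescription $h_\hc=\varphi$ as a first-order ODE on the spherical horizontal bundle $S\M$ and then invoke the standard existence and uniqueness theorem for ordinary differential equations, using completeness to extend the solution to all of $\interv$. First, recall from Definition~\ref{eqdiffTannoh0} and equation \eqref{eq:hc22} that, for a curve parametrized by arc length, requiring $h_\hc(t)=\varphi(t)$ is equivalent to the second-order condition $\nabla_{\dot\hc}\dot\hc=\varphi(t)\,J\dot\hc$. I would therefore view the lift $t\mapsto(\hc(t),\dot\hc(t))$ as an integral curve in $S\M$ of the time-dependent system
\[
\dot\hc=u,\qquad \nabla_{\dot\hc}u=\varphi(t)\,Ju,
\]
with initial datum $(x,v)$. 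Using the Tanno connection $\nabla$ to split $T(S\M)$ into horizontal and vertical parts, the right-hand side defines a smooth time-dependent vector field $V_t$ on the $4$-dimensional manifold $S\M$: its horizontal component is the horizontal lift of $u$, while its vertical component is $\varphi(t)\,Ju$, which is tangent to the fibre circle since $\{u,Ju\}$ is an orthonormal basis of $\distr$ and thus $Ju$ spans $u^{\perp}\cap\distr$. In a local orthonormal frame $X_1,X_2$, writing $u=\cos\theta\,X_1+\sin\theta\,X_2$, this is exactly the coupling of $\dot\hc=\cos\theta\,X_1(\hc)+\sin\theta\,X_2(\hc)$ with the scalar equation $\dot\theta=\varphi+c_{12}^{1}\cos\theta+c_{12}^{2}\sin\theta$ of Lemma~\ref{lemma10}; the bundle formulation simply makes this intrinsic and globally well-defined.

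Before integrating, I would verify that the constraints cutting out $S\M$ are preserved, so that $V_t$ is genuinely tangent to $S\M$. Since $\nabla\omega=0$ and $Ju$ is horizontal, along any solution $\tfrac{d}{dt}\omega(u)=\omega(\nabla_{\dot\hc}u)=\varphi(t)\,\omega(Ju)=0$, so $u$ remains in the distribution. Since $\nabla g=0$ and $g(Ju,u)=0$, one gets $\tfrac{d}{dt}\|u\|^{2}=2g(\nabla_{\dot\hc}u,u)=2\varphi(t)\,g(Ju,u)=0$, so $u$ stays a unit vector; in particular the projected curve $\hc$ is automatically horizontal and parametrized by arc length.

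By the Picard--Lindel\"of theorem applied to the smooth field $V_t$ on $S\M$, there is a unique maximal integral curve through $(x,v)$, giving local existence and uniqueness of a curve $\hc$ with $\hc(0)=x$, $\dot\hc(0)=v$ and $h_\hc=\varphi$. The point that requires care is global existence on all of $\interv$: because $\hc$ has unit speed, on any bounded subinterval $[a,b]\subset\interv$ containing $0$ the curve stays in the closed ball $\overline{B}(x,\max(|a|,|b|))$, which is compact by completeness of $(M,d_{\SR})$, and its lift stays in the compact set $S\M$ over this ball. This rules out finite-time blow-up, so the maximal solution extends over all of $\interv$. Uniqueness of the whole curve then follows from uniqueness of integral curves, since any admissible curve lifts to an integral curve of $V_t$ with the same initial condition $(x,v)$; alternatively one may invoke Proposition~\ref{p:char} directly.

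The main obstacle I anticipate is not the local ODE theory but the two more structural points: checking that $V_t$ is a well-defined smooth vector field \emph{tangent to} $S\M$ (which is precisely where $\nabla\omega=0$ and $\nabla g=0$ enter), and using completeness together with the unit-speed property to guarantee that the maximal solution does not escape to infinity before covering the prescribed interval $\interv$.
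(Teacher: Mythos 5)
Your proposal is correct and follows essentially the same route as the paper: both recast the condition $h_\hc=\varphi$ as the Cauchy problem $\nabla_{\dot\hc}\dot\hc=\varphi(t)\,J\dot\hc$ and use $\nabla\omega=0$ and $\nabla g=0$ to show that horizontality and unit speed are propagated, with uniqueness coming from ODE uniqueness. Your only addition is to spell out the global-existence step (compactness of closed balls under completeness and the unit-speed bound), which the paper asserts more briefly.
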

\begin{proof}

(i).\ 
Let $\hc_1,\hc_{2} :\interv\to\M$ be two smooth horizontal curves parametrized by arc length such that $\hc_{i}(0)=x$, $\dot \hc_{i}(0)=v$ for $i=1,2$, and such that
$ h_{\hc_1}=\varphi=h_{\hc_2}$. It follows that $\hc_1 ,\hc_2$  both satisfy the same differential equation
 \begin{align*}
 \nabla_{\dot \hc}\dot \hc &=\varphi(t)J\dot \hc. 
 \end{align*}
having the same initial conditions. Hence 
$ \hc_1=\hc_2$.
 
(ii).\ Fix $x\in M$, a unit vector $v\in T_{x}M$, and a smooth function
$ \varphi: \interv\to \mathbb{R}$. Since $\M$ is complete, there exists $\hc: \interv\rightarrow \M$ a smooth solution to the Cauchy problem:
 \begin{align*}
  \nabla_{\dot \hc} \dot \hc=\varphi(t) J\dot \hc,\quad \hc(0)=x,\dot \hc(0)=v,
 \end{align*}
We are left to show that $\hc$ is horizontal and  hasunit speed. 
By definition of the Tanno connection, $\nabla \omega =0$, hence
\begin{align*}
 \frac{\text{d}}{\text{d}t} \omega( \dot \hc(t) ) =\omega( \nabla_{\dot \hc(t)}  \dot \hc) = \omega( \varphi(t)J\dot \hc(t) )= 0,
\end{align*}
which implies that $\dot \hc(t)$ is horizontal for any $t \in \interv$. Moreover  $\nabla g=0 $, hence
\begin{align*}
 \frac{\text{d}}{\text{d}t}g ( \dot \hc(t),\dot \hc(t) )&= 2 g ( \nabla_{\dot \hc(t)} \dot \hc, \dot \hc(t) )= 2 g ( \varphi(t)J\dot \hc(t), \dot \hc(t) ) =0,
\end{align*}
which means that $\dot \hc(t)$ is  a unit vector for every $t\in \interv$, since it has norm 1 for $t=0$.
\end{proof}
We end this section by showing that for a sub-Riemannian problem that is defined by a so-called isoperimetric problem on a Riemannian surface $N$ (cf.\ \cite{AG99} for the terminology), the characteristic deviation of a horizontal curve is the geodesic curvature of its projection.
\subsection{Isoperimetric problems}\label{s:isoper}
Let $(N,g_{N})$ be a two-dimensional Riemannian manifold and $A$ be a 1-form on $N$. For $x,y\in N$ let 
\[\Omega^{N}_{x,y}=\{\alpha:[0,T]\to N\mid  \alpha\in C^{\infty}, \alpha(0)=x, \alpha(T)=y\}.
\]
The \emph{isoperimetric problem} on $M$ associated with the 1-form $A$, is the following
\begin{equation} \label{eq:iso1}
\inf\left\{\ell(\alpha) \mid \alpha\in \Omega^{N}_{x,y}, \int_{\alpha}A=c\right\},
\end{equation}
where $c$ is a real constant and $x,y$ are points on $N$. If one chooses $A$ in such a way that $dA=\mathrm{vol}_{N}$ then one recovers the classical problem of minimizing the length of a curve spanning a fixed area. 

One can introduce the sub-Riemannian structure on $M=N\times \mathbb{R}$ by lifting a curve $\alpha$ on $N$ to a curve $\hc(t)=(\alpha(t),z(t))$ where
\[
z(t)=\int_{0}^{t}A(\dot \alpha (s))ds.
\]
The lifted curves $\hc$ are then tangent to the distribution defined as $\distr=\ker \omega$ where $\omega=dz-A$. Notice that $\omega$ is contact if and only if $dA$ is never vanishing on $N$. If $\pi:M\to N$ denotes the canonical projection, then $\pi_{*}$ restricts to an isomorphism between $\distr$ and $TN$. Denoting by $g=\pi^{*}g_{N}$ the pull-back of the metric of $N$ on the distribution $\distr$,  problem \eqref{eq:iso1} rewrites as
\begin{equation}\label{eq:iso2}
\inf\{\ell_{\SR}(\hc) \mid \hc \ \mathrm{horizontal},\, \hc(0)=(x,0),\,\hc(T)=(y,c)\}.
\end{equation}
\begin{prop} 
Let $\hc:[0,T]\to M$ be the smooth horizontal lift of a smooth curve $\alpha:[0,T]\to N$. Then $h_{\hc}(t)=\kappa^{N}_{\alpha}(t)$, where $\kappa^{N}_{\alpha}$ is the Riemannian geodesic curvature of $\alpha$ on $N$.
\end{prop}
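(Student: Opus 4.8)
The plan is to reduce both sides to the moving-frame formulas already available: the characteristic deviation is handled by Lemma~\ref{lemma10}, while the signed geodesic curvature on $N$ is handled by the classical expression in a local orthonormal frame. First I would fix, along $\alpha$, a positively oriented $g_{N}$-orthonormal frame $e_{1},e_{2}$ of $TN$ and lift it to the horizontal frame $X_{1},X_{2}$ of $\distr$ characterized by $\pi_{*}X_{i}=e_{i}$. Since $g=\pi^{*}g_{N}$ and $\pi_{*}|_{\distr}$ is an isometry onto $TN$, the pair $X_{1},X_{2}$ is an orthonormal frame of the distribution, $\pi$-related to $e_{1},e_{2}$. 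Writing $\dot\alpha=\cos\phi\,e_{1}+\sin\phi\,e_{2}$ and pushing the decomposition $\dot\hc=\cos\theta\,X_{1}+\sin\theta\,X_{2}$ forward through $\pi_{*}$ (using $\pi\circ\hc=\alpha$, so $\pi_{*}\dot\hc=\dot\alpha$) gives $\theta=\phi$ along the curve, and in particular $\dot\theta=\dot\phi$.

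Next I would invoke Lemma~\ref{lemma10}, which yields $h_{\hc}=\dot\theta-c_{12}^{1}\cos\theta-c_{12}^{2}\sin\theta$ with $c_{12}^{k}=g([X_{1},X_{2}],X_{k})$. The key geometric point is that since $X_{1},X_{2}$ are $\pi$-related to $e_{1},e_{2}$, the bracket $[X_{1},X_{2}]$ is $\pi$-related to $[e_{1},e_{2}]$; because $X_{0}\perp\distr$, only the horizontal part of $[X_{1},X_{2}]$ contributes to $c_{12}^{1}$ and $c_{12}^{2}$, and that horizontal part projects to $[e_{1},e_{2}]$. Hence $c_{12}^{k}=g_{N}([e_{1},e_{2}],e_{k})$ for $k=1,2$. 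I would stress that the vertical normalization of $\omega$, i.e.\ the coefficient $c_{12}^{0}$, never enters these two structure constants, so the isoperimetric form $\omega=dz-A$ may be used directly without rescaling to the normalization $d\omega|_{\distr}=-\mathrm{vol}_{\distr}$.

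On the surface side I would recall the standard moving-frame expression for the signed geodesic curvature. With the Levi-Civita connection $1$-form $\beta$ defined by $\nabla^{N}_{V}e_{1}=\beta(V)e_{2}$ and $\nabla^{N}_{V}e_{2}=-\beta(V)e_{1}$, a unit-speed curve with $\dot\alpha=\cos\phi\,e_{1}+\sin\phi\,e_{2}$ satisfies $\nabla^{N}_{\dot\alpha}\dot\alpha=(\dot\phi+\beta(\dot\alpha))\,J_{N}\dot\alpha$, so $\kappa^{N}_{\alpha}=\dot\phi+\beta(\dot\alpha)$, where $J_{N}$ is the $+\pi/2$ rotation. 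A one-line Koszul computation for the orthonormal frame (all metric-derivative terms vanish) gives $g_{N}(\nabla^{N}_{e_{1}}e_{1},e_{2})=-g_{N}([e_{1},e_{2}],e_{1})$ and $g_{N}(\nabla^{N}_{e_{2}}e_{1},e_{2})=-g_{N}([e_{1},e_{2}],e_{2})$, hence $\beta(\dot\alpha)=-\cos\phi\,c_{12}^{1}-\sin\phi\,c_{12}^{2}$. Combining this with $\theta=\phi$ produces the chain $\kappa^{N}_{\alpha}=\dot\phi-c_{12}^{1}\cos\phi-c_{12}^{2}\sin\phi=h_{\hc}$, which is the claim.

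The main obstacle is purely the bookkeeping of orientations and signs, since both $h_{\hc}$ and $\kappa^{N}_{\alpha}$ are signed quantities and the statement asserts equality of the signs, not merely of the absolute values. Concretely, one must verify $\pi_{*}J=J_{N}\pi_{*}$, i.e.\ that the complex structure $J$ on $\distr$ coming from $d\omega(X,Y)=g(X,JY)$ projects to the rotation $J_{N}$ compatible with the orientation of $N$ for which $\kappa^{N}_{\alpha}$ is measured. This reduces to choosing $e_{1},e_{2}$ positively oriented and checking, via $\mathrm{vol}_{\distr}=\pi^{*}\mathrm{vol}_{N}$ and the paper's convention $[X_{1},X_{2}]=X_{0}\bmod\distr$, that the induced orientations match. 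Once this compatibility is established, the identities above close the argument.
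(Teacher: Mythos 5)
Your proposal is correct and follows essentially the same route as the paper: decompose $\dot\hc$ and $\dot\alpha$ in $\pi$-related orthonormal frames, apply Lemma~\ref{lemma10}, and match it against the moving-frame formula $\kappa^{N}_{\alpha}=\dot\theta-c_{12}^{1}\cos\theta-c_{12}^{2}\sin\theta$, which the paper simply calls ``a standard computation'' and you carry out explicitly via the connection form and Koszul's identity. Your closing remark on checking $\pi_{*}J=J_{N}\pi_{*}$ makes precise the orientation bookkeeping that the paper leaves implicit.
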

\begin{proof} Fix an orthonormal basis $X_{1},X_{2}$ for the distribution and write 
$\dot \hc=\cos (\theta) X_{1}+\sin (\theta) X_{2}$.
Then it is easy to see that
$\dot \alpha=\cos (\theta) Y_{1}+\sin (\theta) Y_{2}$,
where $Y_{i}:=\pi_{*}X_{i}$ is an orthonormal basis for the Riemannian metric on $N$. Then it is a standard computation to show that
\[
\kappa^{N}_{\alpha}=\dot \theta -c_{12}^{1}\cos \theta -c_{12}^{2}\sin \theta.
\]
is the geodesic curvature of $\alpha$ on $N$. The proof is completed by Lemma~\ref{lemma10}.
\end{proof}

\subsection{Normal coordinates}
We express the characteristic deviation in a particular adapted set of coordinates called normal coordinates, as introduced in \cite{el1996small}.
\begin{prop}\label{defGauthier} If $p$ is a point in $\M$, there exist a neighborhood $U$ of $p$ and coordinates $( x,y,z )$ on $U$, and smooth functions
$u,v :U\rightarrow \mathbb{R}$
that satisfy
\begin{align*}
u( 0,0,z ) =v( 0,0,z ) =\frac{\partial v}{\partial x}( 0,0,z )=\frac{\partial v}{\partial y}( 0,0,z ) =0, 
\end{align*}
such that the two vector fields
\begin{align*}
X_1 &=\left( \frac{\partial}{\partial x}-\frac{y}{2}\frac{\partial}{\partial z} \right) +uy\left( y\frac{\partial}{\partial x} -x\frac{\partial}{\partial y} \right) -v\frac{y}{2}\frac{\partial}{\partial z},\\
X_2&=\left(\frac{\partial}{\partial y} +\frac{x}{2}\frac{\partial}{\partial z}\right) -ux\left( y\frac{\partial}{\partial x} -x\frac{\partial}{\partial y} \right) +v\frac{x}{2}\frac{\partial}{\partial z},
\end{align*}
define an orthonormal frame of the distribution around $p=(0,0,0)$.
\end{prop}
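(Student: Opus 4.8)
The plan is to construct the coordinates explicitly, in the spirit of \cite{el1996small}, by a two-stage normalization: a flow-box along the Reeb orbit through $p$, followed by a residual gauge-fixing that produces the stated form of $X_1,X_2$. First I would straighten the Reeb field. Since $X_0$ is nowhere vanishing and transverse to $\distr$, the flow-box theorem yields a coordinate $z$ with $X_0=\partial_z$; I take the transversal $\{z=0\}$ through $p$ \emph{tangent} to $\distr_p$, equip it with coordinates $(x,y)$ centered at $p=(0,0,0)$, and let the $z$-axis be the Reeb orbit $z\mapsto\Phi_z(p)$. Because $\mathcal L_{X_0}\omega=d(\iota_{X_0}\omega)+\iota_{X_0}d\omega=0$, in these coordinates $\omega=\alpha\,dx+\beta\,dy+dz$ with $\alpha,\beta$ independent of $z$; tangency of the transversal to $\distr_p$ gives $\alpha(0,0)=\beta(0,0)=0$, and $z$-independence then forces $\alpha=\beta=0$ along the entire $z$-axis. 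Hence $\distr=\ker\omega=\mathrm{span}(\partial_x,\partial_y)$ all along the axis, although $g|_{\distr}$ need not yet be Euclidean there.

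Second, I would fix the residual gauge. A smooth, $z$-dependent linear change $M(z)\in GL_2^+(\mathbb{R})$ of the horizontal coordinates (leaving the axis pointwise fixed) can be chosen, by Gram--Schmidt applied to the $z$-family of inner products $g|_{\distr_{(0,0,z)}}$, so that $\partial_x,\partial_y$ become $g$-orthonormal along the axis; this yields $X_1=\partial_x$, $X_2=\partial_y$ on the axis, i.e.\ the conditions $u(0,0,z)=v(0,0,z)=0$. Such a change destroys $X_0=\partial_z$ off the axis, which is consistent with the target form. Writing now the general positively oriented $g$-orthonormal frame of $\ker\omega$ and solving $\omega(X_i)=0$ expresses $\alpha,\beta$ through two scalar functions that I name $u,v$; imposing the normalization $d\omega|_{\distr}=-\mathrm{vol}_{\distr}$, equivalently $d\omega(X_1,X_2)=-1$, couples $u,v$ to the metric and fixes the leading antisymmetric part of $\alpha,\beta$, producing the Heisenberg terms $-\tfrac{y}{2}\partial_z$ and $+\tfrac{x}{2}\partial_z$ forced by $\alpha,\beta$ vanishing along the axis. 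The remaining freedom is a $z$-dependent angular rotation together with a near-identity horizontal diffeomorphism fixing the axis to second order; the rotation generator $R=y\partial_x-x\partial_y$ appearing in the corrective terms $uy\,R$ and $-ux\,R$ records precisely that this residual gauge is angular.

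The main obstacle is this last, simultaneous normalization: one must check that the residual freedom is exactly sufficient to enforce $\partial_x v(0,0,z)=\partial_y v(0,0,z)=0$ without violating $d\omega|_{\distr}=-\mathrm{vol}_{\distr}$ or the earlier conditions. I would carry this out order by order in $r=\sqrt{x^2+y^2}$ along the axis: at each order the normalization reduces to a linear system solvable by the corresponding coefficient of the coordinate change (angular rotation, radial reparametrization, and a second-order horizontal diffeomorphism), the delicate point being compatibility at second order, where setting $v$ to vanish to second order could a priori clash with the contact normalization. A convenient way to keep this transparent, and to confirm the final form, is the geometric content hidden in the answer: in the target coordinates one verifies the exact identity $\cos\phi\,\partial_x+\sin\phi\,\partial_y=\cos\phi\,X_1+\sin\phi\,X_2$, so the radial lines $r\mapsto(r\cos\phi,r\sin\phi,z)$ are unit-speed horizontal curves making constant angle $\phi$ with $X_1$. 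Thus the whole construction can be recast as shooting unit-speed horizontal rays from the Reeb orbit, and the vanishing conditions on $u,v$ become the statements that these rays, and the chosen frame along the axis, are normalized to the appropriate order; reading off the frame in these coordinates then gives $X_1,X_2$ in the stated form.
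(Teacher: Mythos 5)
The paper offers no internal proof of this proposition: it is imported from \cite{el1996small}, so the only benchmark is whether your construction is complete on its own terms. The preliminary reductions you make are correct and standard: the flow-box for $X_0$, the observation that $\mathcal{L}_{X_0}\omega=0$ forces $\omega=\alpha\,dx+\beta\,dy+dz$ with $\alpha,\beta$ independent of $z$ and vanishing along the axis, and the $z$-dependent Gram--Schmidt making the frame equal to $\partial_x,\partial_y$ on the axis. But the sketch stops exactly where the content of the proposition begins. After those reductions a general orthonormal frame of $\ker\omega$ still depends on several free functions, whereas the stated normal form asserts that the entire correction to the flat Heisenberg frame is governed by \emph{two} scalars $u,v$, arranged so that $xX_1+yX_2=x\,\partial_x+y\,\partial_y$, with the horizontal part of the correction proportional to $y\partial_x-x\partial_y$ with the specific coefficients $uy$ and $-ux$, and with $v$ vanishing to second order on the axis. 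You assert that the residual gauge (a $z$-dependent rotation of the frame, a reparametrization of $z$, a near-identity horizontal diffeomorphism) is ``exactly sufficient'' and that the order-by-order linear systems are solvable, and you yourself flag the possible second-order clash between $\partial_x v=\partial_y v=0$ and $d\omega|_{\distr}=-\mathrm{vol}_{\distr}$ --- but none of this is carried out. Since that normalization \emph{is} the proposition, the proof is incomplete.

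The proposed shortcut --- recasting the construction as ``shooting unit-speed horizontal rays from the Reeb orbit'' --- does not close the gap as stated, because a unit-speed horizontal curve in a contact structure is not determined by its initial point and velocity: by the paper's own Proposition~\ref{p:1} one must also prescribe its characteristic deviation, so ``the'' horizontal ray in direction $\phi$ from $(0,0,z)$ is underdetermined. The identity $\cos\phi\,\partial_x+\sin\phi\,\partial_y=\cos\phi\,X_1+\sin\phi\,X_2$ along radial lines is a correct \emph{consequence} of the normal form, but using it to define the coordinates is circular unless you specify intrinsically which horizontal curves to shoot and then prove that the resulting map is a diffeomorphism and that the frame components take the stated two-function form. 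To complete the argument you would either have to execute the order-by-order normalization honestly, including the second-order compatibility you flagged, or simply defer to \cite{el1996small} as the paper does.
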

It can be easily checked from the definition of Reeb vector field $X_{0}$ that in normal coordinates
\begin{equation}\label{Reebz} 
[X_{1},X_{2}](0)=X_{0}(0)=\tfrac{\partial}{\partial z}.
\end{equation}

In normal coordinates the characteristic deviation of a horizontal curve parametrized by arc length leaving from the origin is computed explicitly.
\begin{prop}\label{propcharacteristicdevGauthier} Let us consider $(x,y,z)$ a system of normal coordinates around $p\in \M$.
If $\hc :\interv \to \M$ is a smooth horizontal curve such that $\hc(0)=p$ and parametrized by arc length with $\hc(t)=(x(t),y(t),z(t))$
\begin{align*}
\dot z(0)=\ddot z(0)=0 ,\quad 
z^{(3)}(0)=\frac{h_{\hc}(0)}{2}.
\end{align*}
In particular we have
\begin{align*}
 h_{\hc}(0)&=\lim_{t\rightarrow 0}\frac{12 z(t) }{( x^2(t) +y^2(t) )^{3/2}}.
\end{align*}
\end{prop}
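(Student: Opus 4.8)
The plan is to compute the coordinate components of $\dot \hc$ directly from the explicit orthonormal frame of Proposition~\ref{defGauthier} and then read off the Taylor coefficients of $z(t)$ at $t=0$. First I would write $\dot\hc=\cos(\theta)X_1+\sin(\theta)X_2$ for a smooth function $\theta$ along $\hc$, exactly as in Lemma~\ref{lemma10}. Substituting the expressions for $X_1,X_2$ and collecting the $\partial/\partial z$ component gives
\[
\dot z=\tfrac{1+v}{2}\bigl(x\sin\theta-y\cos\theta\bigr),
\]
while the $\partial/\partial x$ and $\partial/\partial y$ components reduce, at $t=0$, to $\dot x(0)=\cos\theta(0)$ and $\dot y(0)=\sin\theta(0)$, because the condition $x(0)=y(0)=0$ kills every term carrying a factor of $u$.

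Next I would exploit the normalization conditions $x(0)=y(0)=0$ together with $v(0,0,z)=0$ (so that $v(0)=0$) to differentiate the formula for $\dot z$ up to three times at $t=0$. Writing $F:=x\sin\theta-y\cos\theta$, a short computation using $\dot x(0)=\cos\theta(0)$, $\dot y(0)=\sin\theta(0)$ shows $F(0)=0$ and $\dot F(0)=0$, whence $\dot z(0)=\ddot z(0)=0$. Since $v(0)=0$ and $F(0)=\dot F(0)=0$, the only term surviving in $z^{(3)}(0)$ is $\tfrac12\ddot F(0)$. I would then compute $\ddot x(0)=-\dot\theta(0)\sin\theta(0)$ and $\ddot y(0)=\dot\theta(0)\cos\theta(0)$, the $u,v$ contributions again vanishing at the origin, which yields the clean identity $\ddot F(0)=\dot\theta(0)$ and therefore $z^{(3)}(0)=\tfrac12\dot\theta(0)$.

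To match this with $h_\hc(0)$ I would invoke Lemma~\ref{lemma10}, which gives $h_\hc=\dot\theta-c_{12}^1\cos\theta-c_{12}^2\sin\theta$. In normal coordinates, identity \eqref{Reebz} gives $[X_1,X_2](0)=X_0(0)$, so $c_{12}^1(0)=c_{12}^2(0)=0$ and hence $h_\hc(0)=\dot\theta(0)$. Combining with the previous paragraph yields $z^{(3)}(0)=h_\hc(0)/2$, the first assertion. For the limit formula I would Taylor expand: since $z(0)=\dot z(0)=\ddot z(0)=0$ and $z^{(3)}(0)=h_\hc(0)/2$, we get $z(t)=\tfrac{h_\hc(0)}{12}t^3+o(t^3)$, while $\dot x(0)^2+\dot y(0)^2=1$ gives $x^2+y^2=t^2+o(t^2)$ and thus $(x^2+y^2)^{3/2}=t^3+o(t^3)$; dividing produces the stated limit.

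The main obstacle is purely one of bookkeeping in the third-derivative computation, where one must track which of the many product terms survive at the origin. The decisive simplifications are the vanishing of $v$ and of the $u$-dependent corrections at $p$ (guaranteed by Proposition~\ref{defGauthier}), and the algebraic identity $\ddot F(0)=\dot\theta(0)$, which after expanding collapses to $\dot\theta(0)(\cos^2\theta(0)+\sin^2\theta(0))$. I expect no conceptual difficulty beyond organizing these cancellations carefully.
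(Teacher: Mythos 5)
Your proposal is correct and follows essentially the same route as the paper: write $\dot\hc$ in the normal-coordinate frame, read off the ODEs for $(x,y,z)$, compute the Taylor coefficients of $z$ at $0$ using the vanishing of $u,v$ and of $x,y$ at the origin, and identify $\dot\theta(0)$ with $h_\hc(0)$ via $[X_1,X_2](0)=X_0(0)$ (the paper does this last step through the identity $[\T,J\T]=[X_1,X_2]-\grad_{\SR}\psi$, which amounts to the same use of Lemma~\ref{lemma10}). Your explicit bookkeeping with $F=x\sin\theta-y\cos\theta$ simply fills in the ``after some computations'' step of the paper's argument.
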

\begin{proof}
Let us consider $\T $ a smooth horizontal unit  vector field extending the velocity field of $\hc$ and 
such that $
\T =\cos( \psi )X_1 + \sin( \psi )X_2$. 
We have,  using   \eqref{Reebz}:
\begin{align}\label{eq:derzz}
h_{\hc}(0)&=-g([ \T ,J\T  ]( \hc(0) ),\T ( \hc(0) )  ) \nonumber \\
&=-g([ X_1,X_2 ] ( \hc(0) )-\grad_{\SR}(\psi)( \hc(0) ),\T ( \hc(0) )  )=\frac{\text{d}}{\text{d}t}\bigg|_{t=0}\psi( \hc(t) ).
\end{align}
Moreover, denoting $\psi(t):=\psi(\hc(t))$ (and similarly for $u(t),v(t)$), we have for $t\in I$
\begin{align*}
\dot {x}(t)&=\cos( \psi(t) )( 1+u(t)  y(t)^2)-\sin( \psi(t)) u(t)  x(t)y(t),\\
\dot {y}(t)&=\sin( \psi(t) )( 1+u(t) x(t)^2)-\cos(\psi(t) ) u(t)  x(t)y(t),\\
\dot {z}(t)&=\left( -\cos( \psi(t) )\frac{y(t)}{2} + \sin( \psi(t) )\frac{x(t)}{2} \right)( 1+v(t)).
 \end{align*}
By Taylor expansion one has
\begin{align}\label{eq:xy00}
 x(t)&=\cos(\psi(0))t+o(t),\quad y(t)=\sin(\psi(0))t+o(t),
 \end{align}
 After some  computations, using \eqref{eq:derzz} and the properties of the function $v$, one has
 \begin{align*}
 \dot z(0)=\ddot z(0)=0 ,\quad 
z^{(3)}(0)=\frac12 \frac{\text{d}}{\text{d}t}\bigg|_{t=0}\psi( \hc(t) )=\frac{h_{\hc}(0)}{2}.
\end{align*}
which combined with \eqref{eq:xy00}  implies  the conclusion.
\end{proof}

%

\section{The geodesic curvature}

In this section we prove Proposition~\ref{propcharacterizationgeod}, which states that the geodesic curvature $k_{\hc}$ along a horizontal curve $\hc$ is identically zero if and only if the curve is a geodesic. 

\medskip
Let us start by introducing the geodesic curvature of  a smooth horizontal curve.

\begin{de} \label{defgeocurv} Let
$
\hc :\interv\rightarrow\M
$
be a smooth horizontal curve parametrized by arc length. The \emph{geodesic curvature} of $\hc$ is the smooth function $k_{\hc} : \interv  \to \mathbb{R}$ defined by
\[
 k_{\hc}(t):= \frac{d}{dt} h_{\hc}(t) -\eta ( \dot \hc(t) ).
\]
\end{de}
Here $h_{\hc}$ is the characteristic deviation and  $\eta$ is the directional invariant introduced in Proposition~\ref{etaiota}.
Notice that in the language of tensors $ k_{\hc}$ can be rewritten as 
$$ k_{\hc}=  \frac{d}{dt}g(\nabla_{\dot \hc}\dot \hc,J\dot \hc) -g(\tau(\dot \hc),\dot \hc).$$

\smallskip
Proposition~\ref{propcharacterizationgeod}, stated in a different but equivalent form, already appeared in the literature, see 
\cite[Proposition 15]{rumin}  (see also \cite[Lemma 1.1]{chan}). 

Here we give a self-contained proof following our notation, based on the Pontryagin maximum principle, which states that on a contact manifold all geodesics are projections of solutions of a Hamiltonian system.

\subsection{Hamiltonian description} 

In what follows, we denote by
$\pi :  T^*\M\rightarrow \M$ the canonical projection.
Given a smooth vector field $X \in\Gamma ( T\M)$ on $M$, we denote by
\begin{align}\label{defcoordinateh}
\begin{array}{rccl}
 h_{X} :T^*\M\to\mathbb{R},\qquad 
 h_{X}(\xi)=\langle\xi, X( q ) \rangle,
\end{array}
\end{align}
the linear function on fibers associated with $X$, where $q=\pi(\lambda)$.
Given  an orthonormal frame $ X_1,X_2 $ of the distribution and  the Reeb field $X_{0}$, we consider the associated functions $h_{X_{i}}$ for $i=0,1,2$, and we define the \emph{sub-Riemannian Hamiltonian} $H : T^*\M \rightarrow \mathbb{R}$ as follows
\begin{align}\label{Hamiltonianfunction}
H=\frac{1}{2}( h_{X_1}^2 +h_{X_2}^2 ).
\end{align} 
One can show that $H$ actually does not depend on the choice of the frame $ X_1,X_2 $.

The cotangent bundle $T^{*}M$ is canonically endowed with its canonical symplectic structure $\sigma$, which in turns defines the Hamiltonian vector field $\vec H$ through the identity
$\sigma(\cdot, \vec H)=dH$.
In standard coordinates $(p,x)$ on $T^{*}M$ we have
$$
\vec H=\frac{\partial H}{\partial p}\frac{\partial }{\partial x}-\frac{\partial H}{\partial x}\frac{\partial }{\partial p}.
$$

We introduce a frame on the cotangent bundle $T^{*}M$ that is adapted to the choice of an orthonormal frame on $M$. 
Every vector field $X$ on $\M$ can be lifted to a vector field $\overline{X}$ on $ T^*\M$ by requiring
\begin{align}\label{defliftedfield}
\pi_{*} \overline{X}= X \quad  \text{ and } \quad \overline{X} h_{X_j} =0, \ \forall \,j.
\end{align}
We introduce then the frame of $T(T^*\M)$ defined by 
\begin{align*}\left( \overline{X}_0,\overline{X}_1,\overline{X}_2,\frac{\partial}{\partial h_{X_0}},\frac{\partial}{\partial h_{X_1}}, \frac{\partial}{\partial h_{X_2}} \right).
\end{align*}
Notice that $\frac{\partial}{\partial h_{X_i}}$ denotes the vertical vector field on $T^*\M$ satisfying
\begin{align}\label{defpartialhXi}
\pi_{*} \left( \frac{\partial}{\partial h_{X_i}}\right)=0,\quad  \text{ and } \quad\frac{\partial}{\partial h_{X_i}} h_{X_j}=\delta_{i,j}.
\end{align}
We stress that this frame does depend on a choice of orthonormal frame of the distribution. Similarly, we can lift a function
$f :\M\rightarrow \mathbb{R}$ to the function $\overline{f}=f\circ\pi:T^*\M\rightarrow \mathbb{R}$.
In particular, $\overline{c}_{i,j}^{k}$ means ${c}_{i,j}^{k}\circ \pi$.

The sub-Riemannian Hamiltonian vector field is expressed in the lifted frame as :
\begin{align}\label{expressionHamiltonian}
\overrightarrow{H}=\sum_{i=1}^2 h_{X_i} \overline{X}_i +\sum_{i=1}^{2}\sum_{j,k =0}^{2} \overline{c}_{i,j}^{k}h_{X_i} h_{X_k} \frac{\partial}{\partial h_{X_j}},
\end{align}
\begin{rem}
(i) The flow of the Hamiltonian vector field $\overrightarrow{H}$ is the geodesic flow, in the following sense: a horizontal curve
$\hc : \interv\rightarrow \M$
is a geodesic parametrized by constant speed if and only if there exists a lift
$
 \overline{\hc} : \interv\rightarrow T^*\M
$ such that 
$
 \pi\circ\overline{\hc} =\hc
$ and $
 \overline{\hc}'(t) = \overrightarrow{H}( \overline{\hc}(t) )
$, for every $t\in\interv$.
This result is classical, a proof can be found, for instance, in \cite[Chapter 4, Theorem 4.25]{ABB}.

(ii)  Geodesics parametrized by arc length in $\M$ are projections of integral lines of $\overrightarrow H$ contained in $H^{-1}(1/2)$. Indeed one can check by combining \eqref{Hamiltonianfunction} and \eqref{expressionHamiltonian} that
$H$ is constant along its Hamiltonian flow,
and $\| \pi_{*}  \overrightarrow{H}( \overline{\hc}(t) )  \|^2=2H( \overline{\hc}(t) )$.
%
%
%
\end{rem}

\subsection{Proof of Proposition~\ref{propcharacterizationgeod}}
To prove Proposition~\ref{propcharacterizationgeod} we have to show that a smooth horizontal curve $\hc :\interv\to\M$ is a projection of a solution of the Hamiltonian system defined by $H$ if and only if $\kappa_{\hc}=0$ along the curve.

Assume that $\overline{\hc}$ is a lift of $\hc$, and that $\overline{\hc}$ satisfies the Hamiltonian equation  
\begin{equation}\label{eq:HHHH}
\overline{\hc}'(t) = \overrightarrow{H}( \overline{\hc}(t) ).
\end{equation}
Using  expression \eqref{expressionHamiltonian} and projecting along the orthonormal frame $X_{1}=\T$, $X_{2}= J\T $,
we have
\begin{align} \label{eq:234}
 \hc'(t)=h_{\T }( \overline{\hc} (t) )\T ( \hc (t) ) +h_{J\T }( \overline{\hc} (t) )J\T ( \hc (t) ).
\end{align}
By the definition of $\T $, one has 
\begin{align}\label{eq:TJT}
h_{\T }( \overline{\hc} (t) )=1\text{ and }h_{J\T }( \overline{\hc} (t) )=0.
\end{align}
and combining this with \eqref{expressionHamiltonian} and \eqref{eq:HHHH}, we find (recall $c_{\T,J\T}^{X_{0}}=1$)
\begin{align}
 \overline{\hc} '(t)=&\overline{\T }( \overline{\hc}(t) ) -c_{X_0,\T }^{\T }( \hc(t) )\frac{\partial}{\partial h_{X_0}}\label{equalityiii}
+( c_{\T ,J\T }^{\T }( \hc(t) ) +h_{X_0}( \overline{\hc}(t) ) )\frac{\partial}{\partial h_{J \T }}.
\end{align}
From the latter we deduce
\begin{align}\label{systemkzetazero}
 \left\lbrace\begin{array}{rl}
  \frac{\emph{d}}{\emph{d}t}h_{X_0}( \overline{\hc} (t) )&=-c_{X_0,\T }^{\T }( \hc(t) )=\eta( \hc'(t) )\\[0.2cm]
 0=\frac{\emph{d}}{\emph{d}t}h_{J\T}( \overline{\hc} (t) )&=c_{\T ,J\T }^{\T }( \hc(t) ) +h_{X_0}( \overline{\hc}(t) )=-h_{\hc}(t) +h_{X_0}( \overline{\hc}(t) ) .
 \end{array}\right.
\end{align}
which implies
\[
\kappa_{\hc}(t)=\frac{\emph{d}}{\emph{d}t}h_{\hc}(t)-\eta( \hc'(t) )=0.
\]
The converse is proved in a similar way by building a lift satisfying \eqref{eq:TJT}-\eqref{systemkzetazero}.
\begin{rem}\label{ProphX0geod}
 It follows from the proof that if $\hc$ is a geodesic then the characteristic deviation coincides with the evaluation of $h_{X_{0}}$ along its lift, namely
  \begin{align} \label{eq:negativo}
h_{\hc}(t)= h_{X_0}( \overline{\hc}(t) ).
 \end{align}
Recall that the coordinate function $h_{X_0}$ defined on $T^*\M$ does not depend on the choice of an orthonormal frame $( X_1,X_2 )$ of the distribution and can be regarded as a vertical component of the covector.
\end{rem}

\section{About distance and cut locus}\label{s:distcut}
In this section we recall some results on the cut locus of sub-Riemannian distance, which are needed later to prove regularity of the distance function along a smooth horizontal curve. We also refer to \cite[Chapter 11]{ABB} for more details.

\medskip

In what follows we fix $p$ a privileged point in $\M$, that will be the origin of our smooth horizontal curve.
 The sub-Riemannian distance from $p$ is denoted by
\begin{equation}
   \delta_{p} : \M\to \mathbb{R}^+ ,\qquad 
   \delta_{p}(q)=d_{\SR}( p,q ).
\end{equation}
We work on a compact neighborhood of $p$, in such a way that we can assume without loss of generality that the sub-Riemannian structure is complete. 

We denote by $\Sigma_p$ the set of smooth points of $\delta_{p}$.
The following result is proved in \cite{Agrasmoothness} (see also \cite[Chapter 11]{ABB}). 
\begin{theo} Let $p\in M$ and denote by $\delta_{p}$ the sub-Riemannian distance from $p$.
\begin{itemize}
\item[(a)] the set $\Sigma_p$ of smooth points of $\delta_{p}$ is open and dense in $\M$, 
\item[(b)] we have $q\in \Sigma_{p}$ if and only if there exists a unique arc length
geodesic joining $p$ and $q$ that is not abnormal and not conjugate,
\item[(c)] if $q\in \Sigma_{p}$ and $\overline{\geo}$ is the lift on $T^{*}M$ of the unique arc length geodesic $\geo$ joining $p$ to $q$ (in time $\delta_{p}(q))$ then
\begin{equation}\label{eq:ddelta}
\emph{d}_{q}\delta_p=\overline{\geo}( \delta_{p}(q) ).
\end{equation}

\end{itemize}
\end{theo}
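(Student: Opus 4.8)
The plan is to analyze the sub-Riemannian exponential map based at $p$ and to tie smoothness of $\delta_p$ to its regularity. I would set $\exp_p(\lambda)=\pi\big(e^{\overrightarrow{H}}(\lambda)\big)$ for $\lambda\in T^*_p\M$, the projection of the time-one flow of $\overrightarrow{H}$; by the remark following \eqref{expressionHamiltonian} the projections of the trajectories issued from $H^{-1}(1/2)$ are exactly the arc length geodesics, and the geodesic with initial covector $\lambda$ has length $\sqrt{2H(\lambda)}$, whence $\delta_p(\exp_p(\lambda))\le\sqrt{2H(\lambda)}$ with equality iff the geodesic is minimizing. Since $\M$ is contact, $d\omega|_{\distr}$ is non-degenerate and the distribution carries no nonconstant abnormal extremals, so every nontrivial geodesic is normal and the clause ``not abnormal'' in (b) is automatic.

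For (c) I would establish the first-variation identity by a direct variational computation. Given any minimizer $\geo$ from $p$ to $q$ with lift $\overline{\geo}$, and an arbitrary smooth path $q(s)$ with $q(0)=q$, I build a smooth family of competitor curves joining $p$ to $q(s)$ and differentiate their lengths at $s=0$. The interior term vanishes because $\geo$ satisfies the Hamiltonian equation, and only the boundary term survives, pairing the endpoint variation $\dot q(0)$ with the terminal covector $\overline{\geo}(\delta_p(q))$; this is the transversality condition of the Pontryagin maximum principle. It shows that every terminal covector of a minimizer is a supergradient of $\delta_p$ at $q$, and in particular, when $q\in\Sigma_p$ and the minimizer is unique, it yields \eqref{eq:ddelta} together with the eikonal relation $2H(d_q\delta_p)=1$.

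For (b), suppose first $q\in\Sigma_p$. A minimizer $\geo$ exists by completeness, and by the variational identity of (c) the terminal covector of \emph{any} minimizer is a supergradient of $\delta_p$ at $q$; differentiability forces all of them to equal $d_q\delta_p$, and since a covector determines its geodesic through the backward Hamiltonian flow, the minimizer is unique. It is also nonconjugate, since at a conjugate covector $d\exp_p$ degenerates and the Hessian of the smooth function $\delta_p$ near $q$ would be unbounded. Conversely, given a unique nonconjugate minimizer $\geo$ with initial covector $\lambda_0$, nonconjugacy makes $\exp_p$ a local diffeomorphism near $\lambda_0$, and uniqueness together with the stability of the minimizing property lets me invert $\exp_p$ smoothly near $q$; then $\delta_p=\sqrt{2H(\exp_p^{-1}(\cdot))}$ is smooth near $q$, i.e.\ $q\in\Sigma_p$.

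Finally (a). Openness follows directly from (b): nonconjugacy is an open condition on covectors, and uniqueness of the minimizer is stable under small perturbations of $q$ by a compactness argument on the set of candidate minimizing covectors. Density is the crux and the main obstacle. Here I would use that $\delta_p$ is locally semiconcave on $\M\setminus\{p\}$, being locally an infimum of smooth functions with uniform second-order bounds; Alexandrov's theorem then gives that $\delta_p$ is differentiable on a set of full measure, hence dense, and at each such point the minimizer is unique by the argument above. To upgrade differentiability to smoothness I must avoid conjugate points: the conjugate locus is the set of critical values of $\exp_p$, which is Lebesgue-null by Sard's theorem, so the full-measure differentiability set minus the conjugate locus is still of full measure, hence dense, and consists of points with a unique nonconjugate minimizer, i.e.\ of points of $\Sigma_p$ by (b). The delicate point throughout is precisely this combination of semiconcavity, which delivers almost-everywhere differentiability and thus uniqueness, with the Sard argument, which removes conjugacy, so that a dense set of points is simultaneously uniquely reached and nonconjugate.
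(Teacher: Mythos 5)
You should first be aware that the paper offers no proof of this theorem to compare against: it is quoted from the literature, with the sentence ``The following result is proved in \cite{Agrasmoothness} (see also \cite[Chapter 11]{ABB})'' immediately preceding the statement. Your outline reproduces, correctly at the level of a sketch, the standard strategy of those references: the first-variation/transversality identity shows that the terminal covector of any minimizer lies in the superdifferential of $\delta_p$ at $q$, which gives (c) and uniqueness of the minimizer at every point of differentiability; local semiconcavity of $\delta_p$ away from $p$ (valid precisely because the contact structure excludes nontrivial abnormal minimizers, as you note) gives differentiability on a full-measure set; Sard applied to $\exp_p$ removes the conjugate locus; and the inverse function theorem together with compactness of the set of minimizing covectors gives the converse implication in (b). Two remarks are in order. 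First, openness of $\Sigma_p$ is immediate from the definition of a smooth point (smoothness on a neighborhood), so no stability argument is needed for that half of (a).

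The one step you assert without a workable mechanism is the forward implication ``$q\in\Sigma_p$ $\Rightarrow$ $q$ not conjugate.'' Saying that ``the Hessian of the smooth function $\delta_p$ near $q$ would be unbounded'' is not an argument: a priori nothing connects the degeneracy of $d\exp_p$ at a single covector to a blow-up of second derivatives of $\delta_p$, and establishing such a connection (via a Riccati-type comparison along the geodesic) is itself one of the genuinely delicate points of the theorem. The cleaner standard route is the following: if $\delta_p$ is smooth on a neighborhood $U$ of $q$, then the eikonal equation $2H(d_{q'}\delta_p)=1$ holds on $U$, and the map $q'\mapsto \Phi_{-\vec H}^{\,\delta_p(q')}(d_{q'}\delta_p)$ is a smooth right inverse of the arc length exponential on $U$; this forces $\exp_p$ to be a submersion, hence (by equality of dimensions) a local diffeomorphism, at the covector of $\geo$, so $q$ is not a critical value and in particular not conjugate. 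With that replacement, and granting the compactness-of-minimizing-covectors lemma you invoke in several places, your sketch matches the proof in the cited sources.
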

Notice that since we consider contact sub-Riemannian structures, there are no nontrivial abnormal length-minimizers, and one can prove that $M\setminus \Sigma_p$ has measure zero, cf.\ \cite[Chapter 11]{ABB}.

 For any $q\in\Sigma_p$, we denote by $\geo_q$ the unique arc length geodesic reaching $q$ in time $\delta_{p}(q)$.
 In what follows we drop $p$ from the notation of the distance $\delta_{p}$.
\begin{de} \label{defgammaQFieldGammaradialcoord}
For every $q$ in $\Sigma_p$ we define the \emph{radial vector field} $\rvf$ by
 \begin{align*}
\rvf:\Sigma_{p}\to TM,\qquad  \rvf( q ) =\grad_{\SR}\delta (q).
 \end{align*}
and the \emph{radial deviation} $\rd$ by
\[ \rd:\Sigma_{p}\to \mathbb{R}, \qquad \rd(q)=h_{\geo_{q}} ( \delta( q ) ).
\]
 \end{de}
 
 We stress that the radial deviation  $\rd (q)$ represents the characteristic deviation of the geodesic with unit speed joining $p$ and $q$.

\smallskip
The following lemma contains some information on  $\rvf$ and $\rd$ at smooth points.
\begin{lem} \label{propgraddelta} \label{ProphGammae0delta} On $\Sigma_p$, the vector field $\rvf$ is smooth and  satisfies:
\begin{itemize}
\item[(i)] $\rvf \delta=1$,  $J\rvf \delta=0$ 
\end{itemize}
Moreover the function $\rd$ is smooth on $\Sigma_p$ and satisfies the identities:
\begin{itemize}
\item[(ii)] $\rd=X_{0} \delta$,
\item[(iii)] $\rvf \rd =\eta ( \rvf )$,
\item[(iv)] $J\rvf \rd =c_{J\rvf, X_0}^{\rvf}= 2\iota( \rvf  ) -c_{\rvf,X_0}^{J\rvf}$.
\end{itemize}
\end{lem}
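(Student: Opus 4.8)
The plan is to prove the four identities by exploiting the key relation \eqref{eq:ddelta}, which states that at a smooth point $q\in\Sigma_p$ the differential $d_q\delta$ equals the lifted geodesic $\overline{\geo}(\delta(q))$ in $T^*M$. This is the bridge between the metric object $\delta$ and the Hamiltonian description from Section~4. Concretely, for any vector field $X$ on $M$ one has $X\delta(q)=\langle d_q\delta, X(q)\rangle = h_X(\overline{\geo_q}(\delta(q)))$, so differentiating $\delta$ along $X$ reduces to evaluating the fiber-linear function $h_X$ along the lifted geodesic. First I would record this translation carefully, since every identity below is obtained by specializing $X$ to $\rvf$, $J\rvf$, or $X_0$.

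For item (i), since $\rvf=\grad_{\SR}\delta$, the identity $\rvf\delta=g(\grad_{\SR}\delta,\grad_{\SR}\delta)=\|\grad_{\SR}\delta\|^2$ equals $1$ because the geodesic reaching $q$ has unit speed, i.e.\ $H(\overline{\geo})=1/2$; and $J\rvf\delta=g(\grad_{\SR}\delta,J\grad_{\SR}\delta)=0$ since $J$ is skew. Note also that by \eqref{eq:ddelta} and the definition of $\rvf$, the vector $\rvf(q)=\dot\geo_q(\delta(q))$ is precisely the terminal velocity of the geodesic, so $\rvf$ is smooth on $\Sigma_p$ by smoothness of the exponential map there. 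For item (ii), I would apply the translation above with $X=X_0$: namely $X_0\delta(q)=h_{X_0}(\overline{\geo_q}(\delta(q)))$, and by Remark~\ref{ProphX0geod} (equation \eqref{eq:negativo}) applied to the geodesic $\geo_q$ evaluated at its endpoint, $h_{X_0}(\overline{\geo_q}(\delta(q)))=h_{\geo_q}(\delta(q))=\rd(q)$. This gives $\rd=X_0\delta$ and simultaneously shows $\rd$ is smooth on $\Sigma_p$.

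For items (iii) and (iv), the strategy is to differentiate the identities from (i) and (ii) in the directions $\rvf$ and $J\rvf$ and use commutators. The cleanest route is to exploit the equality of mixed ``second derivatives'' of $\delta$ up to a commutator term: for vector fields $X,Y$ one has $XY\delta - YX\delta = [X,Y]\delta$. I would differentiate $\rvf\delta=1$ and $J\rvf\delta=0$ along $\rvf$ and $J\rvf$ to generate relations among second derivatives of $\delta$, and then combine them with $\rd=X_0\delta$. For (iii), starting from $\rvf\rd=\rvf(X_0\delta)$, I rewrite $\rvf X_0\delta = X_0\rvf\delta + [\rvf,X_0]\delta = X_0(1)+[\rvf,X_0]\delta = [\rvf,X_0]\delta$; then since $[\rvf,X_0]$ is (generically) horizontal plus a multiple of the Reeb field, and using $\rvf\delta=1$, $J\rvf\delta=0$ together with the definition $\eta(\rvf)=g([\rvf,X_0],\rvf)$ from \eqref{eq:etaiota1}, the horizontal component projects onto $\eta(\rvf)$. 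For (iv), the analogous computation with $J\rvf\rd=J\rvf(X_0\delta)=[J\rvf,X_0]\delta+X_0(J\rvf\delta)$ produces $c^{\rvf}_{J\rvf,X_0}$ after projecting $[J\rvf,X_0]$ onto the $\rvf$ direction (using $\rvf\delta=1$, $J\rvf\delta=0$), and the final rewriting $c^{\rvf}_{J\rvf,X_0}=2\iota(\rvf)-c^{J\rvf}_{\rvf,X_0}$ follows from the symmetry of $\tau$, precisely the second relation in \eqref{eq:etaiota1} defining $\iota$.

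The main obstacle I anticipate is the careful bookkeeping of the commutator-versus-structure-function terms, in particular correctly tracking the Reeb component of brackets like $[\rvf,X_0]$ and $[J\rvf,X_0]$ when projecting onto the horizontal distribution, and ensuring that the terms involving $X_0\delta$ (i.e.\ $\rd$ itself) are handled consistently rather than circularly. The subtlety is that $\rvf$ and $J\rvf$ are only defined on $\Sigma_p$ and need not be Killing or parallel, so one must work with the genuine structure functions $c^{k}_{ij}$ rather than assuming any simplification; the skew/trace-free properties of $\tau$ collected in Remark~\ref{r:trace0} and \eqref{eq:etaiota1} are exactly what make the horizontal projections collapse to $\eta(\rvf)$ and $\iota(\rvf)$. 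Everything else is a direct though somewhat lengthy computation once the translation via \eqref{eq:ddelta} is in place.
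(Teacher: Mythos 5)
Your argument is correct, and for items (i), (iii) and (iv) it is essentially the paper's proof: (i) is the eikonal identity, and (iii)--(iv) are obtained exactly as in the paper by writing $\rvf \rd=[\rvf,X_0]\delta$ and $J\rvf \rd=[J\rvf,X_0]\delta$, expanding the brackets in the frame $(\rvf,J\rvf,X_0)$, killing the $J\rvf$-component via (i), and identifying the $\rvf$-component with $\eta(\rvf)$, resp.\ $c^{\rvf}_{J\rvf,X_0}$, through \eqref{eq:etaiota1}. Where you genuinely diverge is item (ii): the paper stays inside the structure-function calculus, computing $0=[\rvf,J\rvf]\delta=c^{\rvf}_{\rvf,J\rvf}\,\rvf\delta+c^{X_0}_{\rvf,J\rvf}\,X_0\delta=-\rd+X_0\delta$, using $c^{X_0}_{\rvf,J\rvf}=1$ and the identity $\rd=-c^{\rvf}_{\rvf,J\rvf}$ coming from \eqref{eq:hzetag} applied to the radial field; you instead pass through the Hamiltonian lift, combining \eqref{eq:ddelta} with \eqref{eq:negativo} to get $\rd(q)=h_{\geo_q}(\delta(q))=h_{X_0}\bigl(\overline{\geo}_q(\delta(q))\bigr)=\langle d_q\delta, X_0(q)\rangle=X_0\delta(q)$. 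Both are valid; your route is shorter and makes the smoothness of $\rd$ on $\Sigma_p$ transparent, while the paper's version keeps the whole lemma at the level of the frame $(\rvf,J\rvf,X_0)$ and the single function $\delta$, without invoking the geodesic equation at the endpoint.

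One loose end you flag but do not close: the Reeb component of $[\rvf,X_0]$ and $[J\rvf,X_0]$. Since $X_0\delta=\rd$ is not zero, you need this component to \emph{vanish}, not merely to be tracked. It does: for any horizontal $X$ one has $\omega([X,X_0])=-d\omega(X,X_0)=0$ by the defining property $d\omega(X_0,\cdot)=0$ of the Reeb field, i.e.\ $c^{X_0}_{X,X_0}=0$; this is the normalization $c^{0}_{0i}=0$ recorded after \eqref{decompositionLieijk}, and it is exactly the one-line fact the paper invokes when it discards the term $c^{X_0}_{\rvf,X_0}\,X_0\delta$ in (iii) and (iv).
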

\begin{proof} 
 (i).\ We have $\rvf \delta =g(\rvf,\grad_{\SR}\delta)=\|\Gamma\|^{2}=1$. Similarly $J\rvf \delta =g(J\rvf,\rvf)=0$ on $\Sigma_p$. The smoothness of $\rvf$ is a consequence of the smoothness of $\delta$ on $\Sigma_p$.

(ii).\ Over $\Sigma_p$, where the field $\rvf$ and the function $\delta$ are smooth, we can write
\begin{align*}
0&= \rvf \left( J\rvf \delta \right) -J\rvf \left( \rvf \delta \right) \text{ (thanks to (i))} \\
&=\left[ \rvf, J\rvf \right]\delta \\
&= c_{\rvf,J\rvf}^{\rvf} \rvf \delta + c_{\rvf,J\rvf}^{J\rvf} J\rvf \delta + c_{\rvf,J\rvf}^{X_0} X_0 \delta\\
&=-\rd + X_0 \delta \text{ (by (i)).}
\end{align*}
where in the last line we used that $\rd=-c_{\rvf,J\rvf}^{\rvf}$, claim (i), and  $c_{\rvf,J\rvf}^{X_0}=1$. 

(iii).\ Let us compute 
\begin{align*}
\rvf \rd &= \rvf \left( X_0 \delta \right)\text{ (thanks to (ii))}\\
&=  \rvf \left( X_0 \delta \right) - X_0 \left( \rvf \delta \right) \\
&=\left[ \rvf , X_0 \right]\delta =c_{\rvf , X_0}^{\rvf} \rvf \delta +c_{\rvf , X_0}^{J\rvf} J\rvf \delta + c_{\rvf , X_0}^{X_0} X_0 \delta\\
&=c_{\rvf , X_0}^{\rvf}=\eta(\rvf), 
\end{align*}
where we used claim (i), $c_{\rvf , X_0}^{X_0} X_0 =0$ and \eqref{eq:etaiota1} for $\eta$. 

(iv).\ Similarly as in (iii), let us write
\begin{align*}
J\rvf \rd &= J\rvf \left( X_0 \delta \right) =  J\rvf \left( X_0 \delta \right) - X_0 \left( J\rvf \delta \right) \\
&=\left[ J\rvf , X_0 \right]\delta =c_{J\rvf , X_0}^{\rvf} \rvf \delta +c_{J\rvf , X_0}^{J\rvf} J\rvf \delta + c_{J\rvf , X_0}^{X_0} X_0 \delta=c_{J\rvf , X_0}^{\rvf} 
\end{align*}
where we used claim (i) and $c_{J\rvf , X_0}^{X_0} X_0 =0$. This corresponds to the first part of the second identity. The second part of the identity is a  consequence of  \eqref{eq:etaiota1}.
\end{proof}

\subsection{On the cut locus} Recall that given 
 a geodesic  $\geo : [0,T]\rightarrow\M$ parametrized by arc length we define the \emph{cut time}  as follows
\begin{align*}
t_{\mathrm{cut}}( \geo )=\sup \{ t>0 \mid \ell_{\SR}(\geo_{|_{[0,t]}})=d_{\SR}(\geo( 0 ),\geo( t ))\}.
\end{align*}
The cut point along $\geo$ is the point $\geo(t_{\mathrm{cut}}( \geo ))$, and the cut locus from a point $p$ is the set $\mathrm{cut}(p)$ of all cut points of arc length geodesic starting from $p$.

For a 3D contact sub-Riemannian manifold one has $M\setminus \mathrm{cut}(p)=\Sigma_{p}$.
 The cut locus is strictly related to the singularities of the exponential map.
 \begin{de} We define the exponential map $\exp:S_{p}M\times \mathbb{R} \times \mathbb{R}^{+}\to M$ as follows: given $(v,h,t)\in S_{p}M\times \mathbb{R} \times \mathbb{R}$ we define $\exp(v,h,t)$ as the value at time $t>0$ of the geodesic with initial vector $v$ and constant characteristic deviation $h$.
 \end{de}
Recall that $S_{p}M$ is the set of unit horizontal vectors at $p$. Usually the exponential map is defined on the set of covectors $T^{*}_{p}M$, or on $S^{*}_{p}M\times \mathbb{R}$ where $S^{*}_{p}M=T^{*}_{p}M \cap H^{-1}(1/2)$, with $H$  the sub-Riemannian Hamiltonian. Here we prefer to parametrize by a unit horizontal vector and its characteristic deviation, which is constant.

 It is well known that the exponential map is smooth. We refer to \cite[Chapter~8]{ABB} for a comprehensive discussion on these results.
 \section{Expansion of the distance}

In this section we fix $\hc :I\rightarrow M$ a smooth horizontal curve parametrized by arc length such that $\hc(0)=p$. We define $\theta :I\setminus\left\{ 0 \right\}\rightarrow S^1$ as the function such that :
\begin{equation}\label{deftheta}
\hc'(t)=\cos( \theta(t) )\rvf( {\hc}(t) )  + \sin( \theta(t) ) J\rvf ( {\hc}(t) ).
\end{equation}
We suppose that $\hc :I\rightarrow M$ satisfies the following assumption
\begin{itemize}
\item[(H)]  $\hc(t)\notin \mathrm{cut}(\hc(0))$ for $t\neq 0$ small. 
\end{itemize} 
As we will prove in Section~\ref{s:crd}, these assumptions are always  satisfied for a 
sufficiently short arc of a smooth horizontal curve parametrized by arc length. 

\medskip
We stress that the vector fields $\Gamma$ and $J\Gamma$ are singular at $\hc(0)$, hence the regularity of the function $t\mapsto \theta(t)$ at $t=0$ is not guaranteed.
Assuming the $C^{2}$ regularity of the function, it  is not difficult to obtain a Taylor expansion of the distance along the curve $\hc$ in terms of the derivatives of $\theta$. More precisely we have the following result.
\begin{prop}\label{PropTaylorTheta} Let $\hc:I\rightarrow M$ be a smooth horizontal curve parametrized by arc length satisfying (H). 
Assume that the function $\theta$ defined by \eqref{deftheta} can be extended to a $\mathcal{C}^2$ function such that
$\theta(0)={\theta}'(0)=0$.
Then
\begin{align}\label{eq:mainth}
d^{2}_{\SR}( \hc(t),\hc(0) )=t^{2}-\frac{{\theta}''(0)^2}{20}t^6 +o( t^6 ).
\end{align}
\end{prop}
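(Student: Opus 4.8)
The plan is to reduce the entire statement to a one-dimensional computation for the scalar function $r(t):=\delta(\hc(t))=d_{\SR}(\hc(t),\hc(0))$, where $\delta=\delta_{\hc(0)}$, and then to recover the expansion of $d_{\SR}^2=r^2$ by integrating a clean first-order relation for $r'$.

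First I would compute $r'(t)$ for $t\neq 0$ small, where assumption (H) guarantees $\hc(t)\in\Sigma_{\hc(0)}$ and hence that $\delta$, and with it the frame $\rvf,J\rvf$, is smooth near $\hc(t)$. By the chain rule and the definition $\rvf=\grad_{\SR}\delta$ we have $r'(t)=g(\rvf(\hc(t)),\hc'(t))$. Substituting the decomposition \eqref{deftheta} and invoking Lemma~\ref{propgraddelta}(i), namely $\rvf\delta=\|\rvf\|^2=1$ and $J\rvf\delta=g(J\rvf,\rvf)=0$, every cross term drops and I obtain the identity
\[
r'(t)=\cos\theta(t),\qquad t\neq 0 \text{ small.}
\]
This is the heart of the argument: along $\hc$, the derivative of the distance from the base point is exactly the cosine of the angle $\theta$ between $\hc'$ and the radial direction.

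Next I would integrate this relation across $t=0$. Since the sub-Riemannian distance is continuous, $r$ is continuous with $r(0)=0$; and since $\theta$ extends continuously to $0$ by hypothesis, $\cos\theta$ is continuous on a full neighborhood of $0$. Applying the fundamental theorem of calculus on $[\eps,t]$ and letting $\eps\to 0$ yields $r(t)=\int_0^t\cos\theta(s)\,ds$, now valid through the singular value $t=0$. Using the hypotheses $\theta(0)=\theta'(0)=0$ together with the $\mathcal C^2$ regularity I Taylor expand $\theta(s)=\tfrac12\theta''(0)s^2+o(s^2)$, whence $\cos\theta(s)=1-\tfrac18\theta''(0)^2 s^4+o(s^4)$. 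Integrating gives $r(t)=t-\tfrac{\theta''(0)^2}{40}t^5+o(t^5)$, and squaring produces
\[
d_{\SR}^2(\hc(t),\hc(0))=r(t)^2=t^2-\frac{\theta''(0)^2}{20}t^6+o(t^6),
\]
which is precisely \eqref{eq:mainth}.

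The only delicate point, which I expect to be the main obstacle, is the passage through $t=0$: there $\delta$ and the radial frame $\rvf,J\rvf$ are singular, and the identity $r'=\cos\theta$ is a priori available only for $t\neq 0$. It must therefore be handled purely by continuity, namely continuity of $r$ as a distance and of the assumed $\mathcal C^2$ extension of $\theta$, rather than by attempting to differentiate $\delta\circ\hc$ at the origin; this is exactly what legitimizes reading off the Taylor coefficients of $r$, and hence of $r^2$, from those of $\theta$. Everything else is the routine expansion recorded above; observe in particular that the correction appears at order six, rather than four, precisely because $\theta(0)=\theta'(0)=0$ forces $\cos\theta(s)-1=O(s^4)$.
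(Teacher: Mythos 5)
Your proposal is correct and follows essentially the same route as the paper: the paper's proof likewise writes $\delta(\hc(t))=\int_0^t g(\grad_{\SR}\delta(\hc(s)),\hc'(s))\,ds=\int_0^t\cos(\theta(s))\,ds$, Taylor expands the cosine using $\theta(s)=\tfrac12\theta''(0)s^2+o(s^2)$, and squares. Your additional care about passing through the singularity at $t=0$ is a reasonable elaboration of a step the paper leaves implicit.
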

\begin{proof} Fix $p=\geo(0)$. We have $d_{\SR}( \hc(t),\hc(0) )=\delta \left( \hc(t) \right)$ and 
\begin{align*}
\delta \left( \hc(t) \right)
&=\int_0^t g( \grad_{\SR} \delta( \hc(s) ),\hc'(s)  )\text{d}s \\
&=\int_0^t \cos\left( \theta(s) \right)\text{d}s =\int_0^t \cos\left( \frac{{\theta}''(0)}{2} s^2 +o( s^2 ) \right)\text{d}s \\
&=t-\frac{{\theta}''(0)^2}{40}t^5 +o( t^5 ). \qedhere
\end{align*}
\end{proof}

The goal of the following sections is to show that the assumptions of Proposition~\ref{PropTaylorTheta} are satisfied for any smooth horizontal curve $\hc$ parametrized by arc length. Moreover, we recover the geometric meaning of the nontrivial coefficient appearing in \eqref{eq:mainth}.

\medskip
We first relate the characteristic deviation and the geodesic curvature with $\theta$. 

\begin{prop} \label{proplinkhktheta} Let $\hc:I\rightarrow M$ be a smooth horizontal curve parametrized by arc length. For any $t\neq 0$ we have
\begin{align}\label{equalinkhtheta}
h_{\hc}(t)&={\theta}'_{\hc}(t)+\cos(\theta(t))\rd ( \hc(t) ) -\sin( \theta(t) )c_{\rvf,J\rvf}^{J\rvf} ( \hc(t) )
\end{align}
Moreover
\begin{align}\label{equalinkktheta}
k_{\hc}(t)&=-\eta ( \hc'(t) ) +\theta''(t) - {\theta}'(t) ( \sin( \theta(t) )\rd( \hc(t) )  +\cos( \theta(t) )c_{\rvf,J\rvf}^{J\rvf}( \hc(t) )   )\\
&\nonumber\quad +\cos^2( \theta(t) ) (\cos ( 2\theta(t) ) \eta ( \hc'(t) )  + \sin ( 2\theta(t) ) \iota ( \hc'(t) )
) \\
&\nonumber \quad +\sin( 2\theta(t) )  \left(  -\sin ( 2\theta(t) ) \eta ( \hc'(t) )
+\cos ( 2\theta(t) ) \iota ( \hc'(t) ) -\frac{1}{2}c_{\rvf,X_0}^{J\rvf} \right)\\
&\nonumber \quad -\frac{\sin( 2\theta(t) )}{2} ( \rvf c_{\rvf,J\rvf}^{J\rvf} )( \hc(t) )
-\sin^2( \theta(t) ) ( J\rvf c_{\rvf,J\rvf}^{J\rvf} )( \hc(t) ).
\end{align}
\end{prop}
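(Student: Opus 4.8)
The plan is to work at a fixed time $t\neq 0$, where assumption (H) guarantees $\hc(t)\in\Sigma_p$, so that the radial frame $\{\rvf,J\rvf\}$ is smooth near $\hc(t)$; since $\hc'(t)$ is a unit horizontal vector and $\{\rvf,J\rvf\}$ is a directed orthonormal frame of $\distr$, the coefficients $\cos\theta=g(\hc',\rvf)$ and $\sin\theta=g(\hc',J\rvf)$ are smooth, hence the function $\theta$ of \eqref{deftheta} is smooth near $t$ and $\theta',\theta''$ are well defined there. Everything below is therefore a computation with genuinely smooth objects, the only caveat being that $\rvf$ is singular at $p=\hc(0)$.

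First I would prove \eqref{equalinkhtheta} by a direct application of Lemma~\ref{lemma10} to the orthonormal frame $X_1=\rvf$, $X_2=J\rvf$. Writing $\hc'=\cos(\theta)\rvf+\sin(\theta)J\rvf$, that lemma yields $h_\hc=\theta'-c_{\rvf,J\rvf}^{\rvf}\cos\theta-c_{\rvf,J\rvf}^{J\rvf}\sin\theta$, and it then suffices to substitute the identity $c_{\rvf,J\rvf}^{\rvf}=-\rd$, established in the proof of Lemma~\ref{propgraddelta}(ii), to obtain exactly \eqref{equalinkhtheta}.

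For \eqref{equalinkktheta} I would differentiate \eqref{equalinkhtheta} along $\hc$ and use the definition $k_\hc=\frac{d}{dt}h_\hc-\eta(\hc')$. Differentiating the three summands of $h_\hc$ by the product rule produces, respectively, $\theta''$; the terms $-\theta'(\sin\theta\,\rd+\cos\theta\,c_{\rvf,J\rvf}^{J\rvf})$ coming from the trigonometric factors; and terms involving the derivatives of the functions $\rd$ and $c_{\rvf,J\rvf}^{J\rvf}$ along the curve. Here I would use that $\frac{d}{dt}f(\hc(t))=\cos\theta\,(\rvf f)+\sin\theta\,(J\rvf f)$ for any smooth $f$: the derivative of $\rd$ is evaluated through Lemma~\ref{propgraddelta}(iii)--(iv), namely $\rvf\rd=\eta(\rvf)$ and $J\rvf\rd=2\iota(\rvf)-c_{\rvf,X_0}^{J\rvf}$, while the derivative of $c_{\rvf,J\rvf}^{J\rvf}$ directly gives the last two summands $-\tfrac12\sin(2\theta)(\rvf c_{\rvf,J\rvf}^{J\rvf})-\sin^2\theta\,(J\rvf c_{\rvf,J\rvf}^{J\rvf})$.

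The final step is to express the directional invariants $\eta(\rvf)$ and $\iota(\rvf)$, which appear after differentiating $\rd$, in terms of $\eta(\hc')$ and $\iota(\hc')$: since $\hc'=\rvf_{\theta}$ in the notation of Lemma~\ref{etaiota}, the rotation formulas of that lemma convert the two families of invariants into one another, and substituting them reorganizes the $\eta(\rvf),\iota(\rvf)$ contributions into the second and third lines of \eqref{equalinkktheta}. I expect the main obstacle to be purely organizational: combining the product rule over three terms, inserting the four cited structural identities at the correct places, and carefully tracking the signs and the angle-doubling in Lemma~\ref{etaiota}. No geometric input beyond Lemmas~\ref{lemma10}, \ref{etaiota} and \ref{propgraddelta} is required.
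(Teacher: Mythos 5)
Your proposal follows the paper's proof essentially verbatim: \eqref{equalinkhtheta} via Lemma~\ref{lemma10} in the frame $\{\rvf,J\rvf\}$ together with $c_{\rvf,J\rvf}^{\rvf}=-\rd$, and \eqref{equalinkktheta} by differentiating along $\hc$, evaluating $\mathrm{d}\rd$ and $\mathrm{d}c_{\rvf,J\rvf}^{J\rvf}$ on $\hc'=\cos\theta\,\rvf+\sin\theta\,J\rvf$ with Lemma~\ref{propgraddelta}(iii)--(iv), and converting $\eta(\rvf),\iota(\rvf)$ into $\eta(\hc'),\iota(\hc')$ via Lemma~\ref{etaiota}. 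The approach is correct and matches the paper; the only (harmless) addition is your preliminary remark on the smoothness of $\theta$ away from $t=0$.
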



\begin{proof}
By Lemma~\ref{lemma10}, using the frame $\{X_{1},X_{2}\}=\{\rvf,J\rvf\}$ we have  for any $t\neq 0$,
 \begin{align*}
  h_{\hc}(t)&= \theta' ( t ) - \cos( \theta(t) ) c_{\rvf,J\rvf}^{\rvf} ( \hc(t) ) - \sin( \theta(t) ) c_{\rvf,J\rvf}^{J\rvf}( \hc(t) ) \\
  &=\theta' ( t ) +\cos( \theta(t) )\rd(\hc (t)) - \sin( \theta(t) ) c_{\rvf,J\rvf}^{J\rvf}( \hc(t) ). 
 \end{align*}
 This proves \eqref{equalinkhtheta}.
 To obtain \eqref{equalinkktheta}, we start from Definition \ref{defgeocurv}:
 \begin{align}\label{kzetathetabegincomputation}
 k_{\hc}&= -\eta ( \hc'(t) ) +h_{\hc}'(t) \nonumber \\
  &=-\eta ( \hc'(t) ) +\theta''(t) - {\theta}'_{\hc}(t) ( \sin( \theta(t) )\rd( \hc(t) )  +\cos( \theta(t) )c_{\rvf,J\rvf}^{J\rvf}( \hc(t) )   ) \\
  & \quad+\cos( \theta(t) )(\text{d}\rd)(\hc' (t)) - \sin( \theta(t) ) ( \text{d}c_{\rvf,J\rvf}^{J\rvf})( \hc'(t) ). \nonumber
 \end{align}
We now focus on the two terms $(\text{d} c_{\rvf,J\rvf}^{J\rvf} )(\hc' (t))$ and $(\text{d}\rd)(\hc' (t))$.
We replace the vector $\hc'(t)$ by its expression in the frame $( \rvf,J\rvf )$ in terms of $\theta$. We obtain
\begin{align}
 (\text{d} c_{\rvf,J\rvf}^{J\rvf} )(\hc' (t))&= \cos( \theta(t) ) ( \rvf c_{\rvf,J\rvf}^{J\rvf} )( \hc(t) )
+\sin( \theta(t) ) ( J\rvf c_{\rvf,J\rvf}^{J\rvf} )( \hc(t) ), \label{kzetathetaI}
\end{align}
hence
\begin{align*}
 - \sin( \theta(t) ) (\text{d} c_{\rvf,J\rvf}^{J\rvf} )(\hc' (t))&= -\frac{\sin( 2\theta(t) )}{2} ( \rvf c_{\rvf,J\rvf}^{J\rvf} )( \hc(t) )
-\sin^{2}( \theta(t) ) ( J\rvf c_{\rvf,J\rvf}^{J\rvf} )( \hc(t) ).
\end{align*}
Moreover
\begin{align}
 \nonumber(\text{d}\rd)(\hc' (t)) &= \cos( \theta(t) )(\rvf \rd)( \hc(t) ) +\sin( \theta(t) )(J\rvf \rd)( \hc(t) )\\
 &\nonumber=\cos( \theta(t) )\eta( \rvf( \hc(t) ) )  +\sin( \theta(t) ) ( 2\iota ( \rvf ( \hc(t) ) ) -c_{\rvf,X_0}^{J\rvf} ( \hc(t) )  ),
\end{align}
where we used Lemma~\ref{propgraddelta}. Using \eqref{deftheta} and Lemma~\ref{etaiota}, we deduce that 
\begin{align*}
 \cos( \theta(t) ) (\text{d}\rd)(\hc' (t))&=
 \cos^2( \theta(t) ) ( \cos ( 2\theta(t) ) \eta ( \hc'(t) )+ \sin ( 2\theta(t) ) \iota ( \hc'(t) ) ) \label{kzetathetaII}\\
 & +\sin( 2\theta(t) )  (  -\sin ( 2\theta(t) ) \eta ( \hc'(t) )
+\cos ( 2\theta(t) ) \iota ( \hc'(t) ) -\frac{1}{2}c_{\rvf,X_0}^{J\rvf} ).
\end{align*}
and the proof is completed by combining the last identity with \eqref{kzetathetabegincomputation} and  \eqref{kzetathetaI}.
\end{proof}

%
%

\subsection{Continuity of the radial deviation coordinate} \label{s:crd}
Let us start by rewriting identity \eqref{equalinkhtheta} as follows
\begin{align}\
{\theta}'_{\hc}(t)&=h_{\hc}(t)-\cos(\theta(t))\rd ( \hc(t) ) +\sin( \theta(t) )c_{\rvf,J\rvf}^{J\rvf} ( \hc(t) ).
\end{align}

We  prove here the continuity of the radial deviation coordinate along a smooth horizontal curve parametrized by arc length leaving from $p$. More precisely 
\begin{equation} \label{eq:hh0}
\lim_{t\to 0}\left(h_{\hc}(t)-\rd ( \hc(t) )\right)=0.
\end{equation}
We need the following lemma, whose proof follows from the standard Taylor-Lagrange formula. 
\begin{lem} \label{LemmaRegularity}
Let 
$
  F : \mathbb{R}^d \times \mathbb{R} \to \mathbb{R}
$
be a $C^\infty$ function such that for any $x$ in $\mathbb{R}^d $,
\begin{align*}
 F ( x,0 ) =\frac{\partial F}{\partial y}( x,0)   = \cdots =\frac{\partial ^{n-1} F}{\partial y ^{n-1}}( x,0)  =0.
\end{align*}
Then the function $\mathbb{R}^d \times \mathbb{R} \to \mathbb{R}$ defined by $(x,y)\mapsto \frac{F ( x,y ) }{y^n}$ is of class $C^\infty$.
\end{lem}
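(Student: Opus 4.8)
The plan is to prove Lemma~\ref{LemmaRegularity} by an application of Taylor's theorem with integral remainder, applied in the $y$-variable with $x$ held as a parameter. The key point is that the vanishing of $F$ and its first $n-1$ partial $y$-derivatives at $y=0$ means that $F(x,y)$ factors as $y^n$ times something smooth, and I must verify that this ``something'' depends smoothly on \emph{both} $x$ and $y$, including at $y=0$.

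First I would write, by Taylor's theorem with integral remainder in the variable $y$ (with $x$ fixed), the expansion
\begin{equation*}
F(x,y)=\sum_{j=0}^{n-1}\frac{1}{j!}\frac{\partial^j F}{\partial y^j}(x,0)\,y^j+\frac{y^n}{(n-1)!}\int_0^1(1-s)^{n-1}\frac{\partial^n F}{\partial y^n}(x,sy)\,ds.
\end{equation*}
By hypothesis the entire sum vanishes, so
\begin{equation*}
\frac{F(x,y)}{y^n}=\frac{1}{(n-1)!}\int_0^1(1-s)^{n-1}\frac{\partial^n F}{\partial y^n}(x,sy)\,ds=:G(x,y)
\end{equation*}
for $y\neq 0$. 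The right-hand side $G(x,y)$ is manifestly defined for all $(x,y)$, including $y=0$, where it equals $\frac{1}{n!}\frac{\partial^n F}{\partial y^n}(x,0)$.

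The core of the argument is then to show that $G$ is $C^\infty$. Since $F$ is $C^\infty$, the map $(x,y,s)\mapsto \frac{\partial^n F}{\partial y^n}(x,sy)$ is $C^\infty$ on $\mathbb{R}^d\times\mathbb{R}\times[0,1]$, and so is its product with the smooth weight $(1-s)^{n-1}$. Integrating out the parameter $s$ over the compact interval $[0,1]$ preserves smoothness: differentiation under the integral sign is justified because all partial derivatives in $(x,y)$ of the integrand are continuous on the compact slab $\mathbb{R}^d\times K\times[0,1]$ for any compact $K$, hence locally uniformly bounded, so one may differentiate $G$ to any order by passing the derivative inside the integral. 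This gives that $G\in C^\infty(\mathbb{R}^d\times\mathbb{R})$, and since $G$ agrees with $(x,y)\mapsto F(x,y)/y^n$ for $y\neq 0$ and both are continuous, they coincide everywhere the latter is defined; thus the extended function is $C^\infty$.

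I expect the only genuine subtlety to be the justification of differentiation under the integral sign, i.e.\ confirming that the dominating bounds hold locally uniformly so that the integral representation transfers smoothness in all orders and in all variables simultaneously. This is routine but is the step where the argument actually uses the full force of $F$ being $C^\infty$ rather than merely $C^n$; everything else is the standard Taylor--Lagrange factorization.
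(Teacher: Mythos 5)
Your proof is correct and follows exactly the route the paper indicates: the paper gives no detailed argument, stating only that the lemma ``follows from the standard Taylor-Lagrange formula,'' and your Taylor expansion with integral remainder plus differentiation under the integral sign is precisely that argument, carried out carefully (including the correct value $\frac{1}{n!}\partial_y^n F(x,0)$ of the extension at $y=0$).
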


To prove \eqref{eq:hh0} we use normal coordinate. We need the following lemma (cf.\ also with Proposition~\ref{propcharacteristicdevGauthier}).
\begin{lem} \label{LemmahGauthier}In normal coordinates $\left(x,y,z\right)$ around $p$ the map :
\begin{align*}
\begin{array}{rccc}
\Psi:S_p \M \times \mathbb{R} \times \mathbb{R}\to  \mathbb{R}\qquad
\Psi\left( v,h,t \right) = \frac{12 z\left( \exp\left( v,h,t  \right) \right)}{\left( x^2\left( \exp\left( v,h,t  \right) \right) +y^2\left( \exp\left( v,h,t  \right) \right) \right)^{3/2}}
\end{array}
\end{align*}
is smooth and $\Psi( v,h,0 )=h$.
\end{lem}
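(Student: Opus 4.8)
The plan is to reduce the statement to the one-variable regularity criterion of Lemma~\ref{LemmaRegularity}, using that for each fixed $(v,h)$ the curve $t\mapsto\exp(v,h,t)$ is a geodesic of constant characteristic deviation $h$, so that its $z$-coordinate is controlled up to order three by Proposition~\ref{propcharacteristicdevGauthier}. First I would set $x(v,h,t):=x(\exp(v,h,t))$ and likewise $y(v,h,t),z(v,h,t)$; these are smooth in $(v,h,t)$ because $\exp$ is smooth and the normal coordinates are smooth. Since smoothness is local and $S_p\M$ is a smooth one-dimensional manifold, I work in a chart so that $(v,h)$ varies in an open subset of $\mathbb{R}^2$, which is exactly the setting of Lemma~\ref{LemmaRegularity} with $d=2$ and with $t$ playing the role of the distinguished variable $y$.

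For the numerator, I would apply Proposition~\ref{propcharacteristicdevGauthier} to the geodesic $\exp(v,h,\cdot)$, whose characteristic deviation is the constant $h$: this gives $z(v,h,0)=\partial_t z(v,h,0)=\partial_t^2 z(v,h,0)=0$ and $\partial_t^3 z(v,h,0)=h/2$. Lemma~\ref{LemmaRegularity} with $n=3$ then shows that $Z:=z/t^3$ is smooth, with $Z(v,h,0)=\tfrac{1}{3!}\partial_t^3 z(v,h,0)=h/12$. For the denominator, since $x(v,h,0)=y(v,h,0)=0$ and hence $\partial_t(x^2+y^2)(v,h,0)=0$, Lemma~\ref{LemmaRegularity} with $n=2$ shows that $s:=(x^2+y^2)/t^2$ is smooth; its value at $t=0$ equals $(\partial_t x)^2+(\partial_t y)^2$ at $t=0$, the squared norm of the coordinate representation of $v$. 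Because the normal-coordinate frame satisfies $X_1(p)=\partial_x$ and $X_2(p)=\partial_y$ by \eqref{Reebz} and Proposition~\ref{defGauthier}, the unit vector $v$ is represented in coordinates by a Euclidean unit vector; hence $s(v,h,0)=1$ and $s>0$ for $t$ near $0$.

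Finally I would combine the two computations. Writing $x^2+y^2=t^2 s$ gives $(x^2+y^2)^{3/2}=|t|^3 s^{3/2}$ with $s^{3/2}$ smooth and positive near $t=0$, so on the range of times where $\exp$ is used, namely $t\ge 0$, the factor $t^3$ in $z=t^3 Z$ cancels $|t|^3$ and one obtains $\Psi=12Z/s^{3/2}$, which is smooth near $t=0$ with $\Psi(v,h,0)=12\cdot(h/12)/1=h$ (away from $t=0$, on the open set where $x^2+y^2>0$, smoothness is immediate). The point that requires care, and is the only real obstacle, is exactly this cancellation: the denominator $(x^2+y^2)^{3/2}=|t|^3 s^{3/2}$ is merely $C^2$ at $t=0$, so the smoothness of $\Psi$ hinges on the numerator vanishing to order three, which is precisely the content of Proposition~\ref{propcharacteristicdevGauthier}; the appearance of $|t|^3$ rather than $t^3$ is also what confines the argument to nonnegative times.
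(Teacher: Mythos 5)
Your proof is correct and follows essentially the same route as the paper: both factor the numerator as $t^3$ times a smooth function via Proposition~\ref{propcharacteristicdevGauthier} and Lemma~\ref{LemmaRegularity}, and the denominator as $t^3$ (resp.\ $|t|^3$) times a smooth positive function, the only cosmetic difference being that you work with $(x^2+y^2)/t^2$ where the paper uses $(x^2+y^2)^{1/2}/t$. Your observation that the cancellation only yields smoothness for $t\geq 0$ is a point the paper glosses over, but it is harmless since the lemma is only applied along minimizers, i.e.\ for nonnegative times.
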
 
\begin{proof}
The exponential map is smooth. Thanks to Proposition \ref{propcharacteristicdevGauthier} combined with Lemma \ref{LemmaRegularity} we have that 
 \begin{align*}
\begin{array}{rccc}
\Psi_{1}:S_p \M \times \mathbb{R} \times \mathbb{R}\to  \mathbb{R}\qquad
\Psi_{1}\left( v,h,t \right) = \frac{12z\left( \exp\left( v,h,t  \right) \right)}{t^3}
\end{array}
\end{align*}
is smooth and  $\Psi_{1}\left( v,h,0 \right)=h$.
Moreover, for any $\left( v,h \right)\in S_p \M \times \mathbb{R}$, we have that $t\mapsto\exp\left( v,h,t  \right)$ is parametrized by arc length, equal to $p$ at
$t=0$. By the expression of the orthonormal frame of the distribution in normal coordinates around $p$ given in Proposition~\ref{defGauthier}, we deduce that for $ (v,h )$ fixed
\begin{align*}
 \lim_{t\to 0}\frac{1}{t}\left(x^2\left( \exp\left( v,h,t  \right) \right) +y^2\left( \exp\left( v,h,t  \right) \right) \right)^{1/2}=1.
\end{align*}
Therefore, by using once more Lemma \ref{LemmaRegularity}, we obtain that
\begin{align*}
\begin{array}{rccc}
\Psi_{2}:S_p \M \times \mathbb{R} \times \mathbb{R}\to  \mathbb{R}\qquad
\Psi_{2}\left( v,h,t \right) =\frac{ (x^2\left( \exp\left( v,h,t  \right) \right) +y^2\left( \exp\left( v,h,t  \right) \right))^{1/2} }{t}\end{array}
\end{align*}
is smooth and $\Psi_{2}\left( v,h,0 \right)=1$, which concludes the proof.
\end{proof}

\subsection{Horizontal curves do not intersect the cut locus for small times}
 We prove now that the characteristic deviation of geodesics joining $\hc(0)$ with $\hc(t)$ converge to the characteristic deviation of $\hc$ when $t\to 0$. 
 
 As a byproduct we also prove that every horizontal curve does not intersect the cut locus for small times, namely satisfies assumption (H).
 \begin{prop}\label{Proplimith0}
  Let $\hc :I\to M$ be a smooth horizontal curve parametrized by arc length. Then we have
  \begin{itemize}
  \item[(i)] $\hc(t)\notin \mathrm{cut}(\hc(0))$   for $t\neq 0$ small enough, 
\item[(ii)] 
$\lim_{t\to 0}   \rd\left( \hc(t) \right) = h_{\hc}(0)$.
  \end{itemize}
 \end{prop}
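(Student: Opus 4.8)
The plan is to read everything through the smooth function $\Psi$ of Lemma~\ref{LemmahGauthier} and the normal–coordinate invariant of Proposition~\ref{propcharacteristicdevGauthier}. I would work in normal coordinates $(x,y,z)$ around $p=\hc(0)$ (Proposition~\ref{defGauthier}), introduce the function $\psi:=12z/(x^2+y^2)^{3/2}$, defined off the axis $\{x=y=0\}$, and set $\Phi:=\psi\circ\hc$. Since $\hc$ has unit speed one has $x^2+y^2\sim t^2$, so $\Phi$ is defined for small $t\neq0$ and $\Phi(t)\to h_{\hc}(0)$ by Proposition~\ref{propcharacteristicdevGauthier}. The crucial observation is that, by the very definition of $\Psi$, we have $\Psi=\psi\circ\exp$; hence whenever $\exp(v,h,s)=\hc(t)$ for $(v,h,s)\in S_{p}M\times\mathbb{R}\times\mathbb{R}^{+}$ we get $\Psi(v,h,s)=\Phi(t)$. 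In other words, \emph{every} geodesic reaching $\hc(t)$ carries the same value $\Phi(t)$ of $\Psi$, which stays bounded as $t\to0$.

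The key lemma I would isolate is a uniform bound on the characteristic deviation: there exist $H,s_0>0$ such that every minimizing geodesic from $p$ to $\hc(t)$ has $|h|\le H$ for all small $t$. Indeed such a geodesic satisfies $\Psi(v,h,s)=\Phi(t)$, which is bounded, whereas $\Psi(v,h,s)\to+\infty$ as $|h|\to\infty$ with $s\to0$ along the minimizing range $0<|h|s\le 2\pi$. This last fact is the heart of the matter, and I expect it to be \emph{the main obstacle}: it is exactly the manifestation of $p\in\mathrm{cut}(p)$, namely that strongly twisting geodesics return near the Reeb axis, where the horizontal radius $\sqrt{x^2+y^2}$ is small compared with the length and inflates $\psi$. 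I would establish it through the nilpotent (Heisenberg) approximation at $p$: with the anisotropic dilations $\mathcal{D}_\lambda(x,y,z)=(\lambda x,\lambda y,\lambda^{2}z)$ one checks $\psi\circ\mathcal{D}_\lambda=\lambda^{-1}\psi$, while $\exp(v,H/s,s)$ is, after rescaling by $\mathcal{D}_{1/s}$, asymptotic as $s\to0$ to the Heisenberg geodesic with initial velocity $v$ and twist $H$, whose $\psi$–value is a nonzero constant for $0<H<2\pi$. Hence $\Psi(v,H/s,s)\sim s^{-1}\,(\mathrm{const})\to+\infty$, giving the claimed blow-up and thus the bound.

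Granting this bound, I would prove (i) as follows. For $|h|\le H$ the first conjugate time along $\exp(v,h,\cdot)$ is bounded below by some $c(H)>0$, so for $s<\min(s_0,c(H))$ every such geodesic is conjugate-point free. Moreover the rescaled endpoint maps $(\alpha,h)\mapsto\bigl(x/s,y/s,z/s^{3}\bigr)$, writing $v=(\cos\alpha,\sin\alpha)$, converge in $C^{1}$ as $s\to0$ (by Lemma~\ref{LemmaRegularity}) to the cylinder embedding $(\alpha,h)\mapsto(\cos\alpha,\sin\alpha,h/12)$, a proper embedding of $S_{p}M\times[-H,H]$; hence for $s$ small these maps are themselves injective on $\{|h|\le H\}$. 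Consequently, for small $t\neq0$, any two minimizing geodesics to $\hc(t)$ have the same length $s_t=\delta(\hc(t))$ and $|h|\le H$, so by injectivity they coincide, and being short with bounded $h$ they are non-conjugate. Since a contact structure carries no nontrivial abnormals, the characterization of $\Sigma_p$ recalled in Section~\ref{s:distcut} yields $\hc(t)\in\Sigma_p=\M\setminus\mathrm{cut}(p)$, which is assumption (H).

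Finally, (ii) follows by a soft continuity argument. For small $t\neq0$ let $(v_t,\rd(\hc(t)),s_t)$ be the data of the unique minimizing geodesic $\geo_{\hc(t)}$, so that $s_t=\delta(\hc(t))\to0$, $v_t\in S_{p}M$ (compact), and $|\rd(\hc(t))|\le H$ by the bound above; here $\rd(\hc(t))$ is precisely its constant characteristic deviation. From $\exp(v_t,\rd(\hc(t)),s_t)=\hc(t)$ we get $\Psi(v_t,\rd(\hc(t)),s_t)=\Phi(t)$, while $\Psi(v_t,\rd(\hc(t)),0)=\rd(\hc(t))$ by Lemma~\ref{LemmahGauthier}. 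As $\Psi$ is uniformly continuous on the compact set $S_{p}M\times[-H,H]\times[0,s_0]$ and $s_t\to0$, this gives $|\rd(\hc(t))-\Phi(t)|\to0$, whence $\rd(\hc(t))\to\lim_{t\to0}\Phi(t)=h_{\hc}(0)$, proving (ii).
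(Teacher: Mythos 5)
Your architecture is sound and genuinely different from the paper's. You reduce everything to the normal--coordinate invariant $\Psi$ of Lemma~\ref{LemmahGauthier} and try to extract a quantitative a priori bound $|h|\le H$ on the characteristic deviation of all short minimizers reaching $\hc(t)$; granting that bound, your derivation of (i) (injectivity of the rescaled endpoint map on $S_{p}M\times[-H,H]$, conjugate times bounded below, no abnormals) and of (ii) (uniform continuity of $\Psi$ on a compact set) is convincing. The paper never bounds $h$ at all: it argues by contradiction that if $h_{\geo_{t_n}}(0)\ge h_{\hc}(0)+3\eps$ then $\geo_{t_n}$ starts in the superlevel set $\{\Phi\ge h_{\hc}(0)+3\eps\}$ and ends in $\{\Phi\le h_{\hc}(0)+\eps\}$, hence must cross the barrier $\mathcal{S}_{2\eps}$ swept out by the optimal geodesics with characteristic deviation exactly $h_{\hc}(0)+2\eps$; continuity of the cut time plus compactness then forces $\delta(\hc(t_n))\ge\tau_0>0$, a contradiction. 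That soft argument needs no control on how large $h$ can be and no nilpotent approximation.

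The genuine gap is in your key lemma, which you rightly identify as the heart of the matter. To contradict boundedness of $\Psi$ along a sequence of minimizers with $|h_n|\to\infty$ you must handle every subsequential limit of $|h_n|s_n$, but your scaling argument only treats $|h_n|s_n\to H$ for a fixed $H\in(0,2\pi)$. In the regime $|h_n|s_n\to 0$ the rescaling gives $\Psi\sim s_n^{-1}\psi(\widehat q_n)$ with $\psi(\widehat q_n)\to 0$, an indeterminate form; to conclude you would need something like $\Psi(v,h,s)=h\bigl(1+O(s)\bigr)$ uniformly in $h$, which does not follow from smoothness of $\Psi$ on the non-compact domain $S_{p}M\times\mathbb{R}\times\mathbb{R}$. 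Moreover, the two inputs you lean on --- that minimizers satisfy $0<|h|s\le 2\pi$, and that $\mathcal{D}_{1/s}\circ\exp(v,H/s,\cdot)$ converges to the Heisenberg geodesic uniformly for covectors blowing up in the fiber --- are nontrivial small-time asymptotics of the exponential map that are neither proved in your sketch nor available among the paper's tools (Proposition~\ref{asymptoticsLieprojected} and the Jacobi-field appendix give asymptotics along a fixed radial frame, not this). Finally, a smaller slip: despite the wording in the paper's definition of $\exp$, the characteristic deviation is not constant along a geodesic ($\tfrac{d}{ds}h_{\geo}(s)=\eta(\geo'(s))$ by the geodesic equation), so $\rd(\hc(t))=h_{\geo_t}(\delta(\hc(t)))$ is not the initial datum $h_{\geo_t}(0)$ that parametrizes $\exp$; you need the extra estimate $|\rd(\hc(t))-h_{\geo_t}(0)|\le\sup_{B}|\eta|\cdot\delta(\hc(t))\to 0$, which is precisely the first step of the paper's proof.
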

\begin{proof}
 For $t\in I\setminus \{0\}$, let us denote $\geo_{t}$ the length-minimizing geodesic joining $p$ and $\hc(t)$ in time $\delta(\hc(t))$. For $t$ small enough these minimizers are all contained in a closed ball $B$. Writing
 \begin{equation}\label{eq:new}
 |\rd(\hc(t) )-h_{\hc}(0)|\leq |\rd(\hc(t) )-h_{\geo_{t}}(0)|+|h_{\geo_{t}}(0)-h_{\hc}(0)|
 \end{equation}
Since by definition of $\rd$ we have that $\rd(\hc(t) )=h_{\geo_{t}}(\delta(\hc(t)))$, the first term in \eqref{eq:new} tends to zero when $t\to 0$ as
$$|h_{\geo_{t}}(\delta(\hc(t)))-h_{\geo_{t}}(0)|\leq \int_{0}^{\delta(\hc(t))}\left|\frac{d}{ds}h_{\geo_{t}}(s)\right|ds \leq \sup_{B} |\eta|\delta(\hc(t))\to 0$$
where we used that along a geodesic $\frac{d}{ds} h_{\geo}(s)=\eta( \geo' (s))$ thanks to the geodesic equation and $\delta(\hc(t))\to 0$ for $t\to 0$ by continuity of $\delta$. It is sufficient then to prove that
   \begin{align}\label{eq:limith0no} 
\lim_{t\to 0}  h_{\geo_{t}}(0) = h_{\hc}(0)=: h_{0}.
  \end{align}
 Let us now assume that \eqref{eq:limith0no} is not true.
 Without loss of generality, we can assume that there exist $\varepsilon >0$ and a sequence $t_{n}\to 0$ such that 
 \begin{align}\label{eq:3eps}
  h_{\geo_{n}}(0) \geqslant  h_{0}+  3\varepsilon.
 \end{align}
where we denoted for simplicity by $\geo_{n}$ the curve $\geo_{t_n}$.

Let us now fix normal coordinates near $p$ and set $\Phi(x,y,z)=12z/( x^2 +y^2 )^{\frac{3}{2}}$. Define the super- and sub-level set
\begin{align*}
\Phi^{+}_{\lambda_{1}}=\{ \Phi \geq \lambda_{1}  \},\qquad 
\Phi^{-}_{\lambda_{2}}=\{ \Phi \leq \lambda_{2}  \},\qquad \Phi_{\lambda_{1},\lambda_{2}}=\{\lambda_{1}\leq  \Phi \leq \lambda_{2}  \}.
\end{align*}
By definition of characteristic deviation and Proposition~\ref{propcharacteristicdevGauthier}, one has $\hc(t_n)\in \Phi^{-}_{ h_{0} +\varepsilon}$ for $n$ large enough.
On the other hand, thanks to \eqref{eq:3eps}, for  $s$ small enough one has $\geo_{n}(s)\in \Phi^{+}_{ h_{0} +3\varepsilon}$.

Since $\geo_{n}$ reaches the point $\hc( t_n )$ (at time $\delta( \hc( t_n ) )$), this means that for $n$ large enough, $\geo_{n}$ passes from the set $\Phi^{+}_{ h_{0} +3\varepsilon}$ to the set $\Phi^{-}_{ h_{0} +\varepsilon}$. We are now going to show that this gives a contradiction. We need the following:  

\smallskip
\emph{Claim}: there exists $\tau_{0}>0$ such that for every $v\in S_{p}M$ the curve $t\mapsto\exp(v,h_{0}+2\eps,t)$ is optimal and belongs to $\Phi_{h_{0}+\eps,h_{0}+3\eps}$ on $0<t\leq \tau_{0}$.

\smallskip
Assume that the Claim is proved and let us conclude the proof. On one hand $\geo_{t}$ is the length-minimizing geodesic joining $p$ and $\hc(t)$ in time $\delta(\hc(t))$, hence $\geo_{t}(s)\notin \mathrm{cut}(p)$ for $s<\delta(\hc(t))$.
 On the other hand, for $t$ sufficiently small, $\geo_{t}$ passes from the set $\Phi^{+}_{ h_{0} +3\varepsilon}$ to the set $\Phi^{-}_{ h_{0} +\varepsilon}$, hence by continuity must cross the set
 \begin{align} \label{eq:s2e}
  \mathcal{S}_{2\eps}:=\exp\left( S_p \M \times \left\lbrace h_0 +2\varepsilon \right\rbrace \times [0,\tau_{0}] \right).
 \end{align}
This implies $\delta( \hc(t_n) ) \geq \tau_{0}$ for every $n$ large enough, which is clearly a contradiction since $\delta( \hc(t_n) )\to 0$. Notice that $\mathcal{S}_{2\eps}$ is contained in $\Phi_{h_{0}+\eps,h_{0}+3\eps}$ for $\tau_{0}$ small enough.

The proof is concluded by the existence of $\tau_{0}>0$ in the Claim. This is a consequence of the continuity of the cut time with respect to initial conditions in absence of abnormal minimizers (cf.\ \cite[Proposition~8.76]{ABB}), and the compactness of the set of initial data in \eqref{eq:s2e}.
\end{proof}

\subsection{Asymptotics of Lie brackets}
Now we give a statement about the asymptotics of the coefficients of the Lie brackets of the elements of the frame $\left( \rvf,J\rvf \right)$ along a horizontal curve parametrized by arc length leaving from $p$. 
\begin{prop}\label{asymptoticsLieprojected}
 Let  $\hc:I\to M$ be a smooth horizontal curve parametrized by arc length such that $\hc(0)=p$. Then for $t\to 0$
\begin{itemize}
\item[(a)] $\delta\left( \hc(t) \right)c_{\rvf,J\rvf}^{J\rvf}\left( \hc(t) \right)\longrightarrow -4$,
\item[(b)] $\delta^2\left( \hc(t) \right)c_{\rvf,X_0}^{J\rvf}\left( \hc(t) \right)\longrightarrow -6$,
\item[(c)] $\delta^2\left( \hc(t) \right)\rvf c_{\rvf,J\rvf}^{J\rvf}\left( \hc(t) \right)\longrightarrow 4$, 
\item[(d)] $\delta^2\left( \hc(t) \right)J\rvf c_{\rvf,J\rvf}^{J\rvf}\left( \hc(t) \right) =O(1)$.
\end{itemize}
\end{prop}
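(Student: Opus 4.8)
The plan is to reduce everything to computations with the exponential map in normal coordinates, exploiting the fact that along a short horizontal curve the radial frame $(\rvf, J\rvf)$ is determined by the geodesic $\geo_t$ reaching $\hc(t)$, whose behaviour as $t\to 0$ is controlled by Proposition~\ref{Proplimith0} and the normal-coordinate expressions of Proposition~\ref{defGauthier}. The key heuristic is that as $t\to 0$ the sub-Riemannian ball of radius $\delta(\hc(t))$ shrinks, and after rescaling by $\delta$ the structure converges to its nilpotent approximation, namely the Heisenberg group. Since all four quantities in (a)--(d) are geometric invariants built from the radial frame, I expect their rescaled limits to coincide with the corresponding explicit Heisenberg values, which accounts for the universal constants $-4$, $-6$, $4$ appearing on the right-hand sides.

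\textbf{Carrying out the reduction.} First I would express each coefficient $c_{\rvf,J\rvf}^{J\rvf}$, $c_{\rvf,X_0}^{J\rvf}$, and the derivatives $\rvf c_{\rvf,J\rvf}^{J\rvf}$, $J\rvf c_{\rvf,J\rvf}^{J\rvf}$ in terms of the distance function $\delta$ and the vector field $\rvf = \grad_{\SR}\delta$, using the identities already established in Lemma~\ref{propgraddelta}; in particular claim (iv) there already rewrites $c_{\rvf,X_0}^{J\rvf}$ in terms of $\iota(\rvf)$ and $J\rvf\rd$, so these are not independent quantities. Next I would evaluate $\delta$ and $\rvf$ along the optimal geodesics in the normal coordinates of Proposition~\ref{defGauthier}, where by \eqref{Reebz} the bracket $[X_1,X_2](0)=X_0(0)=\partial/\partial z$ and the metric is Euclidean to leading order at $p$. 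Because the exponential map is smooth and the curve $t\mapsto\exp(v,h,t)$ is parametrized by arc length with $(x^2+y^2)^{1/2}/t\to 1$ (as used in Lemma~\ref{LemmahGauthier}), all the required limits become limits of smooth ratios in the variables $(v,h,t)$, and one reads off the constants by computing the flat Heisenberg model explicitly.

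\textbf{Main obstacle.} The delicate point is that $\rvf$ and $J\rvf$ are singular at $p=\hc(0)$, so one cannot simply differentiate at $t=0$; the correct powers of $\delta(\hc(t))$ in the normalizations (one power in (a), two in (b)--(d)) are precisely those that cancel this singularity. The hard part will therefore be to justify rigorously that, after multiplying by $\delta$ or $\delta^2$, the singular contributions combine into quantities with finite limits, and to extract the sharp constants rather than merely boundedness — this is exactly why (d), a mixed transverse derivative, is only claimed to be $O(1)$ rather than convergent, reflecting that its limit would depend on higher-order structure not controlled by the nilpotent approximation alone. I would handle (a)--(c) by an explicit Heisenberg computation and treat (d) by the same rescaling argument, where uniform smoothness of the exponential map on the compact set $S_pM\times\{h\}\times[0,\tau_0]$ furnishes the needed bound.
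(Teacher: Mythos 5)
Your overall intuition --- that the constants $-4$, $-6$, $4$ are the values one would compute in the Heisenberg nilpotent approximation --- is reasonable, but the proposal has a genuine gap at its central step: nothing in it controls the \emph{derivatives} of the distance function under rescaling. The quantities in (a)--(d) are structure functions of the radial frame $(\rvf,J\rvf)$, hence involve second and third derivatives of $\delta$, and these blow up like $\delta^{-1}$ and $\delta^{-2}$ near $p$. The smoothness of the exponential map, which you invoke as the source of ``limits of smooth ratios in $(v,h,t)$'', is not sufficient: to express $c_{\rvf,J\rvf}^{J\rvf}(\hc(t))$ as a function of $(v,h,t)$ one must \emph{invert} the exponential map, and its differential degenerates precisely at $t=0$ --- this is exactly why $\rvf$ and $J\rvf$ are singular at $p$, and the inverse function theorem does not apply there. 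Likewise, convergence of the rescaled structure to the Heisenberg group is a statement about the $C^{0}$ behaviour of $\delta$; it does not by itself give convergence of $\delta\, c_{\rvf,J\rvf}^{J\rvf}$ or $\delta^{2}\, \rvf c_{\rvf,J\rvf}^{J\rvf}$. One would need $C^{2}$/$C^{3}$ convergence of the blown-up distance away from the cut locus, uniformly over the relevant compact set of initial data $(v,h)$, and establishing that is essentially the whole content of the proposition rather than a reduction of it. The same objection applies to your treatment of (d): ``uniform smoothness of the exponential map'' on a compact set bounds the map itself, not the structure functions of the frame obtained from its (degenerate) inverse.

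The paper fills exactly this gap with sub-Riemannian Jacobi fields (Appendix~\ref{s:appjac}). It constructs two Jacobi fields $\mathcal{J}^{\perp},\mathcal{J}^{0}$ along the lifted radial geodesics, vertical over $S^{*}_{p}M$, whose components $\sigma^{\perp}\sim\overline{\delta}^{2}/2$ and $\sigma^{0}\sim-\overline{\delta}^{3}/6$ quantify precisely the order of degeneracy of the exponential map. Restricting the Jacobi ODE system to the frame $(\rvf,J\rvf)$ produces the $2\times 2$ linear system \eqref{SystemdeltaLiebrackets} whose unknowns are exactly $\overline{\delta}\,\overline{c}_{\rvf,J\rvf}^{J\rvf}$ and $\overline{\delta}^{2}\,\overline{c}_{X_0,\rvf}^{J\rvf}$; since the limiting coefficient matrix is invertible, this yields smooth extensions to $\overline{\delta}=0$ and the values $-4$ and $-6$, after which (c) follows by differentiating along $\vec H$ and (d) by expressing $\overline{J\rvf}$ through the same Jacobi fields. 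If you wish to pursue the rescaling route, you would need an analogous quantitative ingredient --- e.g.\ smooth convergence of the blow-up of $\delta$ away from the center --- and proving it would amount to redoing this Jacobi-field analysis.
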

The proof of Proposition~\ref{asymptoticsLieprojected} relies on properties of sub-Riemannian Jacobi fields. We give a self-contained proof in  Appendix~\ref{s:appjac}.
\section{Regularity along a smooth horizontal curve: proof of Theorem~\ref{t:main}}
We now go back to the regularity properties of
the function $\theta$, which satisfies
\begin{equation}\label{deftheta2}
\hc'(t)=\cos( \theta(t) )\rvf( {\hc}(t) )  + \sin( \theta(t) ) J\rvf ( {\hc}(t) ).
\end{equation}
We first prove a technical lemma.
\begin{lem}\label{thetanotnegative} For every $\eps>0$ there exists $\bar t\in (0,\eps)$ such that $\cos\left(\theta(\bar t)\right)\geq 0$.
\end{lem}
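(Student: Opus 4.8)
The plan is to exploit the identity, already used in the proof of Proposition~\ref{PropTaylorTheta}, that the derivative of the distance along $\hc$ equals $\cos\theta$. Since $\rvf=\grad_{\SR}\delta$ and $\{\rvf,J\rvf\}$ is a direct orthonormal frame with $\|\rvf\|=1$ by Lemma~\ref{propgraddelta}(i), for every $t$ such that $\hc(t)\in\Sigma_p$ the decomposition \eqref{deftheta2} together with $g(\rvf,J\rvf)=0$ gives
\[
\frac{d}{dt}\delta(\hc(t))=g\big(\grad_{\SR}\delta(\hc(t)),\hc'(t)\big)=\cos(\theta(t)).
\]
This is the only structural input needed; the rest of the argument is soft.

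First I would fix $\eps>0$ and, invoking Proposition~\ref{Proplimith0}(i), choose $0<\eps'\le\eps$ small enough that $\hc(t)\notin\mathrm{cut}(p)=M\setminus\Sigma_p$ for all $t\in(0,\eps')$. On this interval $\theta$ is well defined and $\cos\theta$ is continuous, so integrating the identity above from $0$ and using $\delta(\hc(0))=0$ yields
\[
\delta(\hc(t))=\int_0^t\cos(\theta(s))\,\emph{d}s,\qquad t\in(0,\eps').
\]
I would then argue by contradiction: if $\cos(\theta(t))<0$ for every $t\in(0,\eps')$, then for any fixed $t\in(0,\eps')$ the integral on the right-hand side is strictly negative, contradicting $\delta(\hc(t))=d_{\SR}(p,\hc(t))\ge 0$. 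Hence there exists $\bar t\in(0,\eps')\subseteq(0,\eps)$ with $\cos(\theta(\bar t))\ge 0$, which is exactly the claim.

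The only point requiring care is that $\theta$ is \emph{a priori} defined only away from the cut locus, so one must restrict to the subinterval $(0,\eps')$ supplied by Proposition~\ref{Proplimith0}(i) before integrating; since the statement asks for $\bar t$ in $(0,\eps)$, replacing $\eps$ by $\eps'$ is harmless. Beyond this, I do not expect any genuine analytic obstacle: the argument reduces to the nonnegativity of $\delta$ and the mean-value observation that an integral of a continuous function cannot be negative unless the integrand is negative somewhere. The main thing to get right is the sign bookkeeping in \eqref{deftheta2} and in Lemma~\ref{propgraddelta}.
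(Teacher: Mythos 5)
Your proof is correct and follows essentially the same route as the paper: both arguments use the identity $\tfrac{d}{dt}\delta(\hc(t))=\cos(\theta(t))$ from Lemma~\ref{propgraddelta} together with $\delta(\hc(0))=0$ and $\delta\geq 0$ to derive a contradiction if $\cos\theta$ were negative on a whole interval $(0,\eps')$. Your extra care in restricting to a subinterval avoiding the cut locus via Proposition~\ref{Proplimith0}(i) is a sensible precaution that the paper leaves implicit.
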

\begin{proof}
Assume by contradiction that there exists $\eps_0>0$ such that $\cos(\theta(t))<0$ for every $t$ in $(0,\eps_0)$.
 By combining Lemma \ref{propgraddelta} and Definition \ref{deftheta}, for $t\in (0,\eps_{0})$
\begin{align*}
\frac{\text{d}}{\text{d}t} \delta( \hc(t) ) = \cos( \theta(t) )<0.
\end{align*}
Since $\delta( \hc(0) )=0$ (recall $\hc(0)=p$) and $\delta$ is continuous, for $t$ positive and small enough one has
$\delta( \hc(t) )<0$,
which is a contradiction.
\end{proof}

\subsection{First order} The goal of this section is to prove that the function $\theta$ can be extended continuously at zero in such a way that $\theta(0)=0$, i.e., we have to show the existence of the limit
$$\lim_{t\to 0}\theta(t)=0.$$
Geometrically, this is saying that the tangent vector at zero of the geodesics joining $\hc(t)$ to $\hc(0)$ converges to $ \hc'(0)$ when $t\to 0$.

\smallskip
We define the following quantity, for every $t>0$:
$$M(t):=\sup_{s\in]0,t]} \left\lvert h_{\hc}(s) -\cos( \theta(s) ) \rd( \hc(s) ) \right\rvert. $$
Notice that $M(t)$ is bounded for $t\to 0$ thanks to Proposition~\ref{Proplimith0}.
\begin{lem}\label{lemmadiffequation}
Let $\hc:I\rightarrow M$ be a smooth horizontal curve parametrized by arc length such that $\hc(0)=p$. Then
for any $t\neq 0 $ small enough 
\begin{align*}
 \left\lvert \sin\left( \theta(t) \right) \right\rvert \leqslant t \frac{M(t)}{3}.
\end{align*}
\end{lem}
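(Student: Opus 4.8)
The plan is to convert relation \eqref{equalinkhtheta} into a scalar first-order ODE for $\theta$ and integrate it with a well-chosen integrating factor. Writing $R(s):=h_{\hc}(s)-\cos(\theta(s))\rd(\hc(s))$ and $C(s):=c_{\rvf,J\rvf}^{J\rvf}(\hc(s))$, identity \eqref{equalinkhtheta} reads $\theta'(s)=R(s)+\sin(\theta(s))\,C(s)$, where by definition of $M$ one has $|R(s)|\le M(t)$ for all $s\in\,]0,t]$. The key analytic input is Proposition~\ref{asymptoticsLieprojected}(a), giving $\delta(\hc(s))\,C(s)\to -4$ as $s\to 0$; this singles out $\delta(\hc(s))^{4}$ as the natural integrating factor, since $-4/\delta$ is the leading coefficient of the $\sin\theta$ term.

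Next I would differentiate $P(s):=\delta(\hc(s))^{4}\sin(\theta(s))$. Using $\tfrac{d}{ds}\delta(\hc(s))=\cos(\theta(s))$ (established in the proof of Lemma~\ref{thetanotnegative}) together with the ODE, and abbreviating $\delta=\delta(\hc(s))$, one finds
\[
P'(s)=\delta^{4}\cos\theta\,R+\cos\theta\,\sin\theta\,\delta^{3}\bigl(\delta\,C+4\bigr),
\]
where the last factor $\psi(s):=\delta C+4$ is $o(1)$ by Proposition~\ref{asymptoticsLieprojected}(a). Since $\delta(\hc(s))\to 0$ and $\sin\theta$ stays bounded, $P(s)\to 0$ as $s\to 0$, so I may integrate from $0$ with vanishing boundary term:
\[
\delta(\hc(t))^{4}\sin(\theta(t))=\int_{0}^{t}\delta^{4}\cos\theta\,R\,ds+\int_{0}^{t}\cos\theta\,\sin\theta\,\delta^{3}\psi\,ds.
\]

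Then I would estimate crudely. From $\tfrac{d}{ds}\delta(\hc(s))=\cos\theta\le 1$ and $\delta(\hc(0))=0$ we get $\delta(\hc(s))\le s$, so with $|R|\le M(t)$ and $|\psi|\le\eps_{1}$ (valid for $t$ small) the right-hand side is bounded by $\tfrac{1}{5}M(t)t^{5}+\tfrac{1}{4}\eps_{1}N(t)t^{4}$, where $N(t):=\sup_{]0,t]}|\sin\theta|$. To isolate $\sin(\theta(t))$ I need the matching lower bound $\delta(\hc(t))\ge c\,t$. This follows from positivity of $\cos\theta$: as long as $N(t)<1$ one has $|\cos\theta|\ge\sqrt{1-N(t)^{2}}>0$ on $]0,t]$, so $\cos\theta$ is continuous and nonvanishing there, hence of constant sign, and Lemma~\ref{thetanotnegative} forces that sign to be positive, whence $\delta(\hc(t))\ge t\sqrt{1-N(t)^{2}}$. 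Dividing and taking the supremum over $]0,t]$ yields the self-improving inequality $N(t)\le\frac{M(t)t}{5(1-N(t)^{2})^{2}}+\frac{\eps_{1}N(t)}{4(1-N(t)^{2})^{2}}$, which for $t$ small (so that $\eps_{1}$ and $N(t)$ are small and the prefactor is near $1/5$) closes to give $N(t)\le tM(t)/3$; the slack between $1/5$ and $1/3$ absorbs all the inequalities.

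The hard part is precisely this last step: controlling the sign of $\cos\theta$, equivalently keeping $N(t)<1$ on a full right-neighborhood of $0$, since the behavior of $\theta$ at $0$ is a priori unknown. I would resolve it by a bootstrap on the maximal interval $]0,t^{*}]$ on which $N\le\tfrac12$: there the estimate applies and gives $N(t)\le tM(t)/3$ (recall $M$ is bounded near $0$ by Proposition~\ref{Proplimith0}), which for $t^{*}$ small forces $N(t^{*})$ strictly below $\tfrac12$; by continuity of $\theta$ the interval cannot be maximal unless it already covers a fixed neighborhood of $0$, establishing the bound for all sufficiently small $t\neq 0$.
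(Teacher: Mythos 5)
Your integrating-factor computation is correct as far as it goes: with $P(s)=\delta(\hc(s))^{4}\sin(\theta(s))$ one does get $P'=\delta^{4}\cos\theta\,R+\cos\theta\sin\theta\,\delta^{3}(\delta C+4)$, and \emph{conditionally on} $N(t)=\sup_{]0,t]}|\sin\theta|$ being, say, at most $1/2$, your chain of estimates (after the second pass needed to push the constant from $16/45$ down towards $1/5$) would close to $N(t)\le tM(t)/3$. The genuine gap is the base case of your bootstrap. You invoke ``the maximal interval $]0,t^{*}]$ on which $N\le\tfrac12$'', but nothing established so far guarantees that any such interval exists: $N$ is nondecreasing, so either $N\le\tfrac12$ on a full right-neighborhood of $0$ or $N(t)>\tfrac12$ for \emph{every} $t>0$ (e.g.\ if $\limsup_{s\to0}|\sin(\theta(s))|>\tfrac12$, which is not excluded at this stage: Lemma~\ref{thetanotnegative} only provides a sequence of times where $\cos\theta\ge0$, and the fact that $d_{\SR}(\hc(t),p)/t\to1$ is derived \emph{after} the present lemma). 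In the second case your continuity argument has no anchor, and the crucial lower bound $\delta(\hc(t))\ge t\sqrt{1-N(t)^{2}}$ --- which requires $\cos\theta>0$ on \emph{all} of $]0,t]$ in order to integrate $\delta'=\cos\theta$ from $0$ --- is unavailable. You also cannot localize the argument to $[a,t]$ with $a>0$, since then the boundary term $P(a)$ no longer vanishes and you would need control of $\sin(\theta(a))$, which is the unknown. Ruling out this scenario is exactly the hard content of the lemma, and your proof is circular on precisely the point you flagged as difficult.

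The paper resolves it by contradiction with a sign analysis of the ODE on the circle: assuming $|\sin(\theta(t_{0}))|>t_{0}M(t_{0})/3$, on the region $\mathcal{Z}_{1}=\{|\sin\theta|>t_{0}M(t_{0})/3\}$ the term $\sin\theta\cdot c_{\rvf,J\rvf}^{J\rvf}$ dominates $R$ in $\theta'=R+\sin\theta\,C$; since $\delta C<-3$, its sign is opposite to that of $\sin\theta$, hence $\tfrac{d}{dt}\cos(\theta(t))>0$ there. Tracing the trajectory down to $t=0^{+}$, either it stays in $\mathcal{Z}_{1}$, forcing $\sin\theta\to\beta\neq0$ and making $\int_{0}^{t_{0}}\sin\theta\,C\,ds$ divergent (as $|C|\gtrsim 3/s$) while $\theta$ remains bounded, or it enters $\{\cos\theta<0\}$ on an initial interval, contradicting Lemma~\ref{thetanotnegative}. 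To salvage your quantitative scheme you would need to prepend an argument of this kind securing $N(t)\le\tfrac12$ for small $t$; without it the proposal does not prove the lemma.
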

\begin{proof}
Let us prove the lemma by contradiction. Let $t_{0}>0$ (which can be chosen arbitrarily small) be such that 
\begin{align*}
 \left\lvert \sin\left( \theta(t_0) \right) \right\rvert > t_0 \frac{M(t_{0})}{3}=:\overline M.
\end{align*}
We decompose the circle $S^{1}$ into three disjoint sets (depending on $t_{0}$).
 \begin{align*}
 \mathcal{Z}_1&=\{\theta\in S^{1}: |\sin \theta|>\overline M\},\\
  \mathcal{Z}_2&=  \mathcal{Z}_1^{c}\cap \{\theta\in S^{1}: \cos \theta>0\},\\
  \mathcal{Z}_3&= \mathcal{Z}_1^{c }\cap \{\theta\in S^{1}: \cos \theta<0\}.
 \end{align*}
where $A^{c}$ denotes the complementary of a subset $A$ of the circle.
By Proposition \ref{asymptoticsLieprojected}, we can assume that for $t\leq t_{0}$ we have:
\begin{align}\label{deltacGammaJGammaJGammanearlimit}
 {\delta}\left( \hc(t) \right) c_{\rvf,J\rvf}^{J\rvf}\left( \hc(t) \right) < -3.
\end{align}
Assume for a moment that $\theta(\hc(t))\in \mathcal Z_{1}$ for every $0\leq t\leq t_{0}$. Then
\begin{align} \label{eq:equequ}
\left\lvert \sin\left( \theta(t) \right)c_{\rvf,J\rvf}^{J\rvf} \left( \hc(t) \right) \right\rvert&=
\left\lvert \frac{\sin\left( \theta(t) \right)}{\delta\left( \hc(t) \right)}\delta\left( \hc(t) \right)c_{\rvf,J\rvf}^{J\rvf} \left( \hc(t) \right) \right\rvert \nonumber\\
&> \left\lvert 3\frac{\sin\left( \theta(t) \right)}{\delta\left( \hc(t) \right)} \right\rvert \text{ (by \eqref{deltacGammaJGammaJGammanearlimit}),}\\
&\geqslant \frac{t_0}{\delta\left( \hc(t) \right)} M(t_{0}) \geqslant M(t_{0}),\nonumber
\end{align}
where we used that $0\leqslant \delta\left( \hc(t) \right)\leq t\leq  t_0$ (thanks to the fact that $\hc$ is arc length parametrized).
Thanks to Proposition \ref{proplinkhktheta}, we have
\begin{align}\label{Thediffequation}
{\theta}'_{\hc}(t)&=h_{\hc}(t)-\cos\left(\theta(t)\right)\rd \left( \hc(t) \right) +\sin\left( \theta(t) \right)c_{\rvf,J\rvf}^{J\rvf} \left( \hc(t) \right).
\end{align}
This implies that, if $\theta(\hc(t))\in \mathcal Z_{1}$ for every $0\leq t\leq t_{0}$, then
 ${\theta}'_{\hc}(t)\neq 0$ and has the same sign as the quantity $\sin\left( \theta(t) \right)c_{\rvf,J\rvf}^{J\rvf} \left( \hc(t) \right)$. By \eqref{deltacGammaJGammaJGammanearlimit}, this means that ${\theta}'_{\hc}(t)$ and $\sin\left( \theta(t) \right)$ have opposite signs,
 which means that
 \begin{align*}
 \frac{\text{d}}{\text{d}t}\cos\left( \theta(t) \right) = -\sin\left( \theta(t) \right){\theta}'_{\hc}(t)> 0.
 \end{align*}
%
%

Let us now go back to the proof. Our assumption says that $\theta(t_0)\in \mathcal Z_{1}$.
By the previous considerations on the orientation of the time-dependent vector field associated to the differential equation \eqref{Thediffequation} on the set $\mathcal Z_{1}$, there are two cases:
 \begin{itemize}
 \item[(i)] there exists $0\leq \bar t \leq t_{0}$  such that $\theta\left( \bar t \right)\in \mathcal{Z}_3$,
  \item[(ii)] for all $0\leq  t \leq t_{0}$, we have $\theta(t)\in \mathcal{Z}_1$.
 \end{itemize}
 
In case (i) we have that $\theta\left( \bar t \right)\in \mathcal{Z}_3$ for every $0\leq t\leq \bar t $. 
By definition of $\mathcal{Z}_3$, this implies that for all $0\leq t\leq \bar t $ 
one has $\cos\left( \theta(t) \right)<0$,
which is impossible, thanks  to Lemma \ref{thetanotnegative}.

In case (ii) we have that $t\mapsto \cos(\theta(t))$ is increasing on $(0,t_0]$, hence $\cos\left( \theta(t) \right)\to \alpha$ for $t\to 0$. Notice that by construction $\alpha\neq \pm1$, hence there exists $\sin \left( \theta(t) \right)\to \beta$, with $\beta\neq 0$.

Let us then rewrite \eqref{Thediffequation} as
\begin{align*}
\theta(t)-\theta(t_0)=\int_{t}^{t_0} h_{\hc}(s)-\cos\left(\theta(s)\right)\rd \left( \hc(s) \right) \text{d}s+\int_{t}^{t_0}\sin\left( \theta(s) \right)c_{\rvf,J\rvf}^{J\rvf} \left( \hc(s) \right)  \text{d}s
\end{align*}
According to Proposition \ref{Proplimith0}, the first integrand is bounded for $s$ small, but the second one explodes for small times since (cf.\ \eqref{eq:equequ})
 \begin{align*}
\left\lvert \sin\left( \theta(s) \right)c_{\rvf,J\rvf}^{J\rvf} \left( \hc(s) \right)  \right\rvert &\geqslant \left\lvert  3\frac{\sin\left( \theta(s) \right)}{\delta\left( \hc(s) \right)} \right\rvert \geqslant  \left\lvert  3\frac{\sin\left( \theta(s) \right)}{s} \right\rvert
\end{align*}
and  $\sin\left( \theta(s) \right)$ converges to a non-zero limit $\beta$ for $s\to 0$.
In both cases we find a contradiction and the statement is proved.
\end{proof}

Next we are ready to prove the regularity up to order one:
\begin{prop}\label{regularitythetazetaorder12}
The function $\theta:I\to S^{1}$, extended continuously by $\theta(0)=0$, is of class $C^{1}$ and $\theta'(0)=0$. 
 \end{prop}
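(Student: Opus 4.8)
The goal is to establish $C^1$ regularity of $\theta$ at $0$, that is, the existence of $\theta'(0)$ with value $0$, given that we already know $\theta(0)=0$ (from the continuous extension, whose existence is the content of the first-order argument culminating in Lemma~\ref{lemmadiffequation}). My plan is to run a refined version of the same bootstrap used in Lemma~\ref{lemmadiffequation}: there the estimate $|\sin\theta(t)|\le t\,M(t)/3$ controlled $\sin\theta$, and hence $\theta$ itself since $\theta(0)=0$, at \emph{linear} order in $t$. Combined with the asymptotics of Proposition~\ref{asymptoticsLieprojected}, this already forces $\theta(t)=O(t)$, which gives the candidate bound $\theta'(0)=\lim_{t\to 0}\theta(t)/t=0$ once we rule out oscillation.

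The concrete steps I would carry out are as follows. First I would quote Lemma~\ref{lemmadiffequation} together with the boundedness of $M(t)$ as $t\to 0$ (which follows from Proposition~\ref{Proplimith0}(ii), since $h_\hc(s)-\cos(\theta(s))\rd(\hc(s))\to h_\hc(0)-h_\hc(0)=0$). This yields $|\sin\theta(t)|\le t\,M(t)/3$, and since $\theta(t)\to 0$ we may invert $\sin$ for small $t$ to get $|\theta(t)|\le C t$ for some constant $C$, hence
\begin{align*}
\limsup_{t\to 0}\frac{|\theta(t)|}{t}\le \lim_{t\to 0}\frac{M(t)}{3}=0,
\end{align*}
giving $\theta(t)=o(t)$ and therefore $\theta'(0)=0$ \emph{as a one-sided difference quotient at $0$}. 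Second, to upgrade this to genuine $C^1$-ness I would examine the differential equation \eqref{Thediffequation} directly. Using $\theta(t)=o(t)$, Lemma~\ref{etaiota}, and the asymptotics $\delta(\hc(t))c^{J\rvf}_{\rvf,J\rvf}\to -4$ and $\delta\sim t$, I would show that each of the three terms on the right-hand side of \eqref{Thediffequation} converges to $0$ as $t\to 0$: the term $h_\hc(t)-\cos(\theta(t))\rd(\hc(t))\to 0$ by Proposition~\ref{Proplimith0}, while $\sin(\theta(t))c^{J\rvf}_{\rvf,J\rvf}(\hc(t))$ behaves like $(\theta(t)/\delta(\hc(t)))\cdot(\delta c^{J\rvf}_{\rvf,J\rvf})\to 0\cdot(-4)=0$ precisely because $\theta(t)=o(t)$ kills the $1/\delta\sim 1/t$ singularity. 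Hence $\theta'(t)\to 0=\theta'(0)$, so $\theta'$ extends continuously across $0$, which is exactly $C^1$ regularity with $\theta'(0)=0$.

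\textbf{Main obstacle.} The delicate point is the interplay between the two small quantities $\theta(t)$ and $\delta(\hc(t))\sim t$ in the singular term $\sin(\theta(t))c^{J\rvf}_{\rvf,J\rvf}(\hc(t))$: the coefficient $c^{J\rvf}_{\rvf,J\rvf}$ blows up like $1/t$, so merely knowing $\theta(t)\to 0$ is not enough — one genuinely needs the \emph{rate} $\theta(t)=o(t)$ to conclude that this term vanishes in the limit rather than contributing a nonzero constant. This is why the quantitative Lemma~\ref{lemmadiffequation}, and not just the continuity statement, is the essential input; the whole argument hinges on the self-improving estimate $|\sin\theta(t)|\le tM(t)/3$ with $M(t)\to 0$. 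A secondary subtlety is that $\theta$ takes values in $S^1$, so I must be careful that the continuous extension and the inversion of $\sin$ take place in a fixed neighborhood of $0\in S^1$ where $\theta$ may legitimately be regarded as real-valued; this is guaranteed by $\theta(t)\to 0$ together with the one-sided nature of the limit for $t>0$, with the $t<0$ side handled symmetrically.
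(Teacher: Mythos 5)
Your proposal is correct and follows essentially the same route as the paper: Lemma~\ref{lemmadiffequation} together with $M(t)\to 0$ gives $\sin(\theta(t))/t\to 0$, hence $\theta'(0)=0$, and then the ODE \eqref{Thediffequation} combined with the asymptotics $\delta\,c^{J\rvf}_{\rvf,J\rvf}\to -4$ and $t/\delta(\hc(t))\to 1$ yields $\theta'(t)\to 0$, i.e.\ $C^1$ regularity. The only difference is presentational: you take the continuity $\theta(t)\to 0$ as already established, whereas the paper re-derives it inside this proof via Lemma~\ref{thetanotnegative} before concluding $M(t)\to 0$; since the statement itself posits the continuous extension, this is immaterial.
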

\begin{proof} We study what happens for positive times. The result for negative times can be obtained similarly by reversing time. By Lemma \ref{lemmadiffequation}, for $t>0$ small enough
\begin{align}\label{Secondappeardiffequation}
 \left\lvert \sin\left( \theta(t) \right) \right\rvert \leqslant t \frac{M(t) }{3}.
\end{align}
Since $M(t)$ is bounded for $t\to 0$ by Proposition \ref{Proplimith0}, \eqref{Secondappeardiffequation} implies when $t\rightarrow 0$
\begin{align*}
\sin\left( \theta(t) \right)\longrightarrow0.
\end{align*}
We deduce that $\cos\left( \theta(t) \right)\to \alpha$ for $t\to 0$, with $\alpha=\pm 1$. Applying Lemma \ref{thetanotnegative}, we have $\alpha=1$. Hence when $t\rightarrow 0$
 \begin{align}\label{thetazetatendszero}
\theta(t)\longrightarrow 0.
\end{align}
Let us then extend $\theta$ by continuity defining 
$\theta(0)=0$, and let us prove that $\theta$ is indeed $C^{1}$. Notice that this implies
\begin{align}\label{deltaequivt}
\delta\left( \hc(t) \right)=\int_0^t \cos\left( \theta(s) \right)\text{d}s.
\end{align}
Combining Proposition~\ref{Proplimith0} and \eqref{thetazetatendszero}, one obtains that $M(t)\to 0$ for $t\to 0$. Hence for $t\to 0$ 
\begin{align}\label{sintovertgoeszero}
\left\lvert \frac{\sin( \theta(t) )}{t} \right\rvert \leqslant \frac{M(t) }{3}\longrightarrow 0.
\end{align}
which means that the function $\theta$ is differentiable at time zero and that
$\theta'(0)=0.$

To show that $\theta$ is $C^1$, we rewrite the differential equation satisfied by $\theta$ as :
\begin{align*}
 {\theta}'_{\hc}(t)&=\big(h_{\hc}(t)-\cos(\theta(t))\rd ( \hc(t) ) \big) +\frac{\sin( \theta(t) )}{t}\frac{t}{\delta( \hc(t) )}\delta( \hc(t) )c_{\rvf,J\rvf}^{J\rvf} ( \hc(t) ).
\end{align*}
By applying Propositions \ref{Proplimith0} - \ref{asymptoticsLieprojected}, and by using \eqref{thetazetatendszero}, \eqref{deltaequivt} and \eqref{sintovertgoeszero}, we deduce that for $t\to 0$ one has 
$ \theta'(t)\to 0 =\theta'(0)$, hence $\theta$ is $C^{1}$.
\end{proof}
Notice that along the lines of the proof we obtained the following well known fact about the metric speed of a horizontal curve. 
\begin{cor} For a smooth horizontal curve $\hc:I\to M$  with unit speed we have 
 \begin{align*}
 \lim_{t\to 0}\frac{d_{\SR}\left( \hc(t), \hc(0) \right)}{|t|}=1.
 \end{align*}
\end{cor}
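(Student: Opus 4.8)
The plan is to deduce the statement directly from the integral representation of the distance obtained in the course of proving Proposition~\ref{regularitythetazetaorder12}. Since $\hc(0)=p$, by definition $d_{\SR}(\hc(t),\hc(0))=\delta(\hc(t))$, so it suffices to understand the behavior of the ratio $\delta(\hc(t))/|t|$ as $t\to 0$.

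First I would recall the identity \eqref{deltaequivt}, valid for $t>0$ small, namely
\[
\delta(\hc(t))=\int_0^t \cos(\theta(s))\,\mathrm{d}s,
\]
which follows from the decomposition \eqref{deftheta2} of $\hc'$ together with Lemma~\ref{propgraddelta}(i), once the continuous extension of $\theta$ with $\theta(0)=0$ is available. Because $\theta$ is continuous at $0$ with $\theta(0)=0$, the integrand $s\mapsto\cos(\theta(s))$ is continuous near $0$ and tends to $\cos(0)=1$ as $s\to 0$. Hence the average of a continuous integrand converges to its value at the endpoint, giving
\[
\lim_{t\to 0^+}\frac{\delta(\hc(t))}{t}=\lim_{t\to 0^+}\frac{1}{t}\int_0^t \cos(\theta(s))\,\mathrm{d}s=1.
\]

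For negative times I would repeat the argument after reversing time, exactly as in Proposition~\ref{regularitythetazetaorder12}, obtaining the same one-sided limit; combining the two yields $\lim_{t\to 0}d_{\SR}(\hc(t),\hc(0))/|t|=1$. There is essentially no remaining obstacle here: the genuinely delicate point, namely the continuity of $\theta$ at $t=0$ with $\theta(0)=0$ (equivalently, that the initial directions of the minimizing geodesics joining $\hc(t)$ to $p$ converge to $\hc'(0)$), has already been secured in Proposition~\ref{regularitythetazetaorder12}. Once this is in hand, the corollary reduces to the elementary mean-value behavior of the average displayed above, which is why it can be read off \emph{along the lines} of the preceding proof.
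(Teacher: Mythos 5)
Your proposal is correct and follows essentially the same route as the paper: the corollary is stated there as a byproduct of the proof of Proposition~\ref{regularitythetazetaorder12}, precisely via the identity $\delta(\hc(t))=\int_0^t\cos(\theta(s))\,\mathrm{d}s$ together with the continuity of $\theta$ at $0$ with $\theta(0)=0$. Nothing further is needed.
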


\subsection{Second order} To prove second order regularity for $\theta$ we need to reformulate the identity characterizing $ k_{\hc}$ given in Proposition \ref{proplinkhktheta}.
\begin{lem}
We have for every $t\neq0$  
\begin{align}\label{kzetathetasimplifiedversion}
 k_{\hc}(t)=\theta''(t) + \frac{4\theta'(t)}{t} + \frac{2\theta(t)}{t^2} +r(t)
 \end{align}
 where $r(t)\to 0$ for $t\to 0$.

\end{lem}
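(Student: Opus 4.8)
The plan is to start from the exact identity \eqref{equalinkktheta} of Proposition~\ref{proplinkhktheta}, which expresses $k_{\hc}(t)$ for $t\neq 0$ as $\theta''(t)$ plus a long list of terms, and to set $r(t):=k_{\hc}(t)-\theta''(t)-\tfrac{4\theta'(t)}{t}-\tfrac{2\theta(t)}{t^2}$, so that the claim becomes $r(t)\to 0$. The two copies of $\theta''(t)$ cancel, and I would sort the remaining terms of \eqref{equalinkktheta} into two families: (A) those carrying one of the \emph{bounded} geometric tensors $\eta(\hc'(t))$, $\iota(\hc'(t))$, $\rd(\hc(t))$, each multiplied by a trigonometric coefficient in $\theta$; and (B) those carrying one of the \emph{singular} structure functions $c_{\rvf,J\rvf}^{J\rvf}$, $c_{\rvf,X_0}^{J\rvf}$, $\rvf c_{\rvf,J\rvf}^{J\rvf}$, $J\rvf c_{\rvf,J\rvf}^{J\rvf}$, whose behaviour is governed by Proposition~\ref{asymptoticsLieprojected}.

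For family (A) I would show every contribution tends to $0$. Collecting the three appearances of $\eta(\hc'(t))$ in \eqref{equalinkktheta}, its total coefficient is $-1+\cos^2\theta\cos 2\theta-\sin^2 2\theta$, which vanishes as $\theta\to0$; likewise the coefficient $\sin 2\theta(\cos^2\theta+\cos2\theta)$ of $\iota(\hc'(t))$ vanishes, and the term $-\theta'\sin\theta\,\rd$ vanishes. Since $\eta,\iota$ are locally bounded on $SM$ (Remark~\ref{rem:chichi}), $\rd$ is continuous at $p$ with $\rd(\hc(t))\to h_{\hc}(0)$ (Proposition~\ref{Proplimith0}), and $\theta(t),\theta'(t)\to0$ (Proposition~\ref{regularitythetazetaorder12}), every such product tends to $0$ and is absorbed into $r(t)$. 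I would invoke Lemma~\ref{etaiota} only to keep $\eta(\hc'(t)),\iota(\hc'(t))$ written against the moving frame exactly as in \eqref{equalinkktheta}.

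Family (B) produces the three surviving terms. Writing $\delta:=\delta(\hc(t))$ and using $\delta/t\to1$ (from \eqref{deltaequivt} and $\cos\theta\to1$), asymptotic (a) gives $-\theta'\cos\theta\,c_{\rvf,J\rvf}^{J\rvf}=\tfrac{\theta'}{t}\big[-\cos\theta\,\tfrac{t}{\delta}(\delta c_{\rvf,J\rvf}^{J\rvf})\big]$ with the bracket tending to $4$, hence equal to $\tfrac{4\theta'}{t}$ plus an error. Combining the two second-order contributions, asymptotics (b)--(c) give $\delta^2\big(c_{\rvf,X_0}^{J\rvf}+\rvf c_{\rvf,J\rvf}^{J\rvf}\big)\to -6+4=-2$, so $-\tfrac{\sin2\theta}{2}\big(c_{\rvf,X_0}^{J\rvf}+\rvf c_{\rvf,J\rvf}^{J\rvf}\big)$ equals $\tfrac{\sin2\theta}{t^2}=\tfrac{2\theta}{t^2}+o(1)$ plus an error, using $\sin2\theta-2\theta=O(\theta^3)=o(t^2)$. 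Finally $-\sin^2\theta\,J\rvf c_{\rvf,J\rvf}^{J\rvf}$ is $O(t^{-2})$ by (d) against $\sin^2\theta=O(\theta^2)=o(t^2)$, hence tends to $0$. Summing, family (B) contributes exactly $\tfrac{4\theta'}{t}+\tfrac{2\theta}{t^2}$ up to errors.

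The main obstacle is the control of these errors, because the coefficients in (B) genuinely blow up: $c_{\rvf,J\rvf}^{J\rvf}\sim-4/t$ and $c_{\rvf,X_0}^{J\rvf},\rvf c_{\rvf,J\rvf}^{J\rvf}\sim t^{-2}$. Replacing each by its leading profile leaves errors of the shape $\theta'\cdot\big(-\cos\theta\,c_{\rvf,J\rvf}^{J\rvf}-\tfrac{4}{t}\big)$ and $\sin2\theta\cdot\big(\text{(2nd order)}-\tfrac{2}{t^2}\big)$, and the only a priori information on $\theta$ at this stage is the quantitative bound $|\theta(t)|\lesssim t\,M(t)$, $|\theta'(t)|\lesssim M(t)$ with $M(t)\to0$ (from Lemma~\ref{lemmadiffequation}, \eqref{sintovertgoeszero} and Proposition~\ref{regularitythetazetaorder12}), but \emph{no rate} for $M(t)$. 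To close the estimate I would use the quantitative form of Proposition~\ref{asymptoticsLieprojected} --- namely that its limits are attained at rate $O(\delta)$, which is exactly what the Jacobi-field expansions of the Appendix yield --- together with $t-\delta=O(t^3)$ (a consequence of $1-\cos\theta=O(\theta^2)$). With rate $O(\delta)=O(t)$ the factor $-\cos\theta\,c_{\rvf,J\rvf}^{J\rvf}-\tfrac{4}{t}$ is $O(1)$, while the second-order factor against $\tfrac{\sin2\theta}{2}$ is $O(M(t)/t)\cdot O(t)=O(M(t))$, so every error is $O(M(t))\to0$, yielding $r(t)\to0$.
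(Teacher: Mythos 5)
Your proof is correct and follows essentially the same route as the paper: both start from the identity of Proposition~\ref{proplinkhktheta}, isolate the three singular terms carrying $c_{\rvf,J\rvf}^{J\rvf}$, $c_{\rvf,X_0}^{J\rvf}+\rvf c_{\rvf,J\rvf}^{J\rvf}$ and $J\rvf c_{\rvf,J\rvf}^{J\rvf}$, extract $\tfrac{4\theta'(t)}{t}+\tfrac{2\theta(t)}{t^2}$ via the asymptotics of Proposition~\ref{asymptoticsLieprojected}, and absorb the bounded $\eta$, $\iota$, $\rd$ contributions into $r(t)$. The only place you go beyond the paper is in making explicit that, since at this stage one only knows $\theta'(t)\to0$ and $\theta(t)/t\to0$ with no rate, one needs the $O(\delta)$ rate of convergence in Proposition~\ref{asymptoticsLieprojected} (which the smoothness statements of the Appendix do provide) to control the error terms of the form $\tfrac{\theta'(t)}{t}\cdot o(1)$ and $\tfrac{\theta(t)}{t^2}\cdot o(1)$ --- a point the paper's concluding sentence leaves implicit.
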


\begin{proof} For the sake of simplicity, we focus on $t>0$. 
 We recall that from Proposition \ref{proplinkhktheta} the curvature $k_{\hc}(t)$ can be expressed as:
 \begin{align*}\label{ktheta}
  k_{\hc}(t)&=-\eta \left( \hc'(t) \right) +\theta''(t) - {\theta}'_{\hc}(t) \left( \sin\left( \theta(t) \right)\rd\left( \hc(t) \right)  +\cos\left( \theta(t) \right)c_{\rvf,J\rvf}^{J\rvf}\left( \hc(t) \right)   \right)\\
&\quad +\cos^2\left( \theta(t) \right) \left(  \cos \left( 2\theta(t) \right) \eta \left( \hc'(t) \right) +\sin \left( 2\theta(t) \right) \iota \left( \hc'(t) \right) \right) \\
& \quad +\sin\left( 2\theta(t) \right)  \left( - \sin \left( 2\theta(t) \right) \eta \left( \hc'(t) \right)
+\cos \left( 2\theta(t) \right) \iota \left( \hc'(t) \right) -\frac{1}{2}c_{\rvf,X_0}^{J\rvf} \right)\\
& \quad -\frac{\sin\left( 2\theta(t) \right)}{2} \left( \rvf c_{\rvf,J\rvf}^{J\rvf} \right)\left( \hc(t) \right)
-\sin^2\left( \theta(t) \right) \left( J\rvf c_{\rvf,J\rvf}^{J\rvf} \right)\left( \hc(t) \right).
 \end{align*}
Let us rewrite the three quantities
\begin{gather}
 -\theta'(t)\cos\left( \theta(t) \right)c_{\rvf,J\rvf}^{J\rvf}\left( \hc(t) \right), \\ 
  -\frac{\sin\left( 2\theta(t) \right)}{2}\left( c_{\rvf,X_0}^{J\rvf}\left( \theta(t) \right) +\left( \rvf c_{\rvf,J\rvf}^{J\rvf} \right)\left( \hc(t) \right) \right),\\
   -\sin^2\left( \theta(t) \right) \left( J\rvf c_{\rvf,J\rvf}^{J\rvf} \right)\left( \hc(t) \right),
\end{gather}
 as follows, respectively
\begin{gather}
 \frac{4\theta'(t)}{t}  + \frac{\theta'(t)}{t}\left( \frac{-t}{\delta\left( \hc(t) \right)}\cos\left( \theta(t) \right)\delta\left( \hc(t) \right)c_{\rvf,J\rvf}^{J\rvf}\left( \hc(t) \right) -4 \right),\\
 \frac{2\theta(t)}{t^2} + \frac{\theta(t)}{t^2}\left(\frac{-t^2}{\delta^2\left( \hc(t) \right)}\frac{\sin\left( 2\theta(t) \right)}{2\theta(t)}\delta^2\left( \hc(t) \right)\left( c_{\rvf,X_0}^{J\rvf} +\left( \rvf c_{\rvf,J\rvf}^{J\rvf} \right) \right)\left( \hc(t) \right) -2\right),\\
 -\left(\frac{\sin\left( \theta(t) \right)}{t}\right)^2\frac{t^2}{\delta^2\left( \hc(t) \right)}\delta^2\left( \hc(t) \right) \left( J\rvf c_{\rvf,J\rvf}^{J\rvf} \right)\left( \hc(t) \right),
\end{gather}
%
%
%
%
Using the Taylor expansion, together with the asymptotics of Propositions \ref{asymptoticsLieprojected} and \ref{regularitythetazetaorder12}, we obtain \eqref{kzetathetasimplifiedversion} with $r(t)$ which tends to zero when $t\to 0$.
\end{proof}
\begin{prop}\label{thetaC2}
The function $\theta:I\to S^{1}$, extended continuously by $\theta(0)=0$ is of class $C^{2}$, with  $\theta'(0)=0$ and 
$
   \theta''(0) =k_{\hc}(0)/6
$.
\end{prop}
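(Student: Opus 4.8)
The plan is to remove the singular coefficients in the relation \eqref{kzetathetasimplifiedversion} by the Euler-type substitution $u(t):=t^{2}\theta(t)$. A direct computation gives the operator identity
\[
(t^{2}\theta)''=t^{2}\theta''+4t\theta'+2\theta,
\]
so that, multiplying \eqref{kzetathetasimplifiedversion} by $t^{2}$, the relation turns into the much more benign equation
\[
u''(t)=t^{2}\bigl(k_{\hc}(t)-r(t)\bigr),\qquad t\neq 0.
\]
By Proposition~\ref{regularitythetazetaorder12} the function $\theta$ is $C^{1}$ with $\theta(0)=\theta'(0)=0$, and it is smooth for $t\neq0$ small (since $\rvf,J\rvf$ are smooth on $\Sigma_{p}$ and $\hc(t)\in\Sigma_{p}$ by Proposition~\ref{Proplimith0}); hence $u$ is $C^{1}$ on $I$ with $u(0)=u'(0)=0$ and smooth for $t\neq0$. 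Since $k_{\hc}$ is continuous and $r(t)\to0$, the right-hand side $t^{2}(k_{\hc}-r)$ is bounded near $0$ and tends to $0$, so $u''$ is continuous on $I\setminus\{0\}$ with $\lim_{t\to0}u''(t)=0$.

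Next I would integrate twice. Writing $k_{\hc}(t)-r(t)=k_{\hc}(0)+o(1)$ and using the fundamental theorem of calculus for the $C^{1}$ function $u'$ (whose derivative $u''$ is bounded, hence integrable near $0$), I obtain
\[
u'(t)=\int_{0}^{t}s^{2}\bigl(k_{\hc}(s)-r(s)\bigr)\,\mathrm{d}s=\frac{k_{\hc}(0)}{3}t^{3}+o(t^{3}),
\]
and a further integration gives $u(t)=\frac{k_{\hc}(0)}{12}t^{4}+o(t^{4})$. Reverting to $\theta$ through $\theta=u/t^{2}$ and $\theta'=u'/t^{2}-2\theta/t$, these two expansions yield
\[
\frac{\theta(t)}{t^{2}}\longrightarrow\frac{k_{\hc}(0)}{12},\qquad \frac{\theta'(t)}{t}=\frac{u'(t)}{t^{3}}-\frac{2\theta(t)}{t^{2}}\longrightarrow\frac{k_{\hc}(0)}{3}-\frac{k_{\hc}(0)}{6}=\frac{k_{\hc}(0)}{6}.
\]
Since $\theta'(0)=0$, the second limit is exactly $\theta''(0)=\lim_{t\to0}\theta'(t)/t=k_{\hc}(0)/6$, establishing that $\theta$ is twice differentiable at the origin with the claimed value.

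Finally I would verify the continuity of $\theta''$ at $0$, to upgrade pointwise twice differentiability to $C^{2}$. Solving \eqref{kzetathetasimplifiedversion} for $\theta''(t)$ and inserting the limits $4\theta'(t)/t\to\tfrac{2}{3}k_{\hc}(0)$ and $2\theta(t)/t^{2}\to\tfrac{1}{6}k_{\hc}(0)$ just obtained, together with $k_{\hc}(t)-r(t)\to k_{\hc}(0)$, gives $\theta''(t)\to k_{\hc}(0)\bigl(1-\tfrac{2}{3}-\tfrac{1}{6}\bigr)=k_{\hc}(0)/6=\theta''(0)$. As $\theta$ is smooth away from $0$, this continuity at the origin yields $\theta\in C^{2}$.

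The main obstacle is precisely the step where the singular terms must be tamed: the individual quantities $\theta'(t)/t$ and $\theta(t)/t^{2}$ appearing in \eqref{kzetathetasimplifiedversion} are a priori indeterminate given only the $C^{1}$ information, so one cannot pass to the limit term by term. The substitution $u=t^{2}\theta$ is what regularizes the problem, and the delicate point is that the two asymptotics produced by the double integration recombine through the exact cancellations $\tfrac{1}{3}-\tfrac{1}{6}=\tfrac{1}{6}$ and $1-\tfrac{2}{3}-\tfrac{1}{6}=\tfrac{1}{6}$ to deliver the coefficient $k_{\hc}(0)/6$; keeping track of the $o(\cdot)$ remainders through these manipulations is where the care is required.
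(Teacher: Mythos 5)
Your argument is correct and rests on the same two inputs as the paper's proof, namely the first-order regularity from Proposition~\ref{regularitythetazetaorder12} and the asymptotic relation \eqref{kzetathetasimplifiedversion}; where you diverge is in how you integrate the resulting Euler-type relation. The paper introduces the auxiliary functions $f(t)=\theta'(t)+2\theta(t)/t-k_{\hc}(0)t/3$ and $g(t)=\theta'(t)-k_{\hc}(0)t/6$ and applies twice a monotonicity lemma (Lemma~\ref{l:ultimo}, proved by a sign argument on $(\varphi^{2})'$) to the first-order operators $\varphi'+\alpha\varphi/t$ with $\alpha=2,3$. You instead observe the factorization $(t^{2}\theta)''=t^{2}\bigl(\theta''+4\theta'/t+2\theta/t^{2}\bigr)$, so that $u=t^{2}\theta$ satisfies $u''=t^{2}(k_{\hc}-r)$, and you integrate twice from $0$; the justification of the fundamental theorem of calculus at the endpoint $0$ is legitimate since $u\in C^{1}(I)$ with $u(0)=u'(0)=0$ (from the $C^{1}$ data on $\theta$) and $u''$ is bounded near $0$. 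The two routes are essentially the same integrating-factor idea in different clothing — indeed the paper's $f$ is $u'/t^{2}$ up to a polynomial — but your version is more direct, bypasses Lemma~\ref{l:ultimo} entirely, and the bookkeeping of the remainders ($\int_{0}^{t}s^{2}o(1)\,\mathrm{d}s=o(t^{3})$, etc.) is carried out correctly, as are the final recombinations $\tfrac13-\tfrac16=\tfrac16$ and $1-\tfrac23-\tfrac16=\tfrac16$ and the continuity of $\theta''$ at $0$ obtained by solving \eqref{kzetathetasimplifiedversion} for $\theta''$. The only presentational point is that you tacitly work with $t>0$; the identical computation for $t<0$ (where $\int_{0}^{t}s^{2}\,\mathrm{d}s=t^{3}/3$ still holds) should be mentioned for completeness, as the two-sided limits are what give differentiability at $0$.
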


\begin{proof}
Let us define, $f:I\setminus\{ 0 \}\to \mathbb R$ by
\begin{align}\label{notationmathfrakF}
f(t):=\theta'(t)+2\frac{\theta(t)}{t}-\frac{k_{\hc}(0)t}{3}.
\end{align}
According to Proposition \ref{regularitythetazetaorder12},  $f(t)\to 0$ for $t\to 0$.
We can rewrite \eqref{kzetathetasimplifiedversion} as follows 
\begin{align}\label{diffmathfrak}
\lim_{t\to 0} \left(f'(t)+2\frac{f(t)}{t}\right)= 0
\end{align}
Thanks to Lemma~\ref{l:ultimo} this implies that $f$ is of class $C^{1}$ on $I$ and  
$f'(0)=0$. 
Differentiating \eqref{notationmathfrakF}, this means that
\begin{align}\label{eq:thislim}
\lim_{t\to 0} \left(\theta''(t) +2\frac{\theta'(t)}{t}-2\frac{\theta(t)}{t^2}-\frac{k_{\hc}(0)}{3}\right)=0.
\end{align}
But if we sum \eqref{eq:thislim} with \eqref{kzetathetasimplifiedversion} (recall also Proposition~\ref{regularitythetazetaorder12}), we deduce that the function $g:I\setminus\{ 0 \}\to \mathbb R$ defined by
 $$g(t)=\theta'( t ) -\frac{k_{\hc}(0)}{6} t$$
satisfies the relations
\begin{align*}
\lim_{t\to 0} g(t) =0,\qquad \lim_{t\to 0} \left( g'(t) +3\frac{g(t)}{t}\right)=0,
\end{align*}
Applying now Lemma~\ref{l:ultimo} to $g$, we have that $g$ is of class $C^{1}$ on $I$ and  
$g'(0)=0$.
This proves that $\theta$ is $\mathcal{C}^2$ on $I$ and that 
$\theta''(0)=k_{\hc}(0)/6$, as required.
\end{proof}
In the previous proof we used twice the following lemma.
\begin{lem}\label{l:ultimo} Let $\varphi:I\setminus\{0\}\to \mathbb{R}$ be a $C^{1}$ function such that for some $\alpha>0$
\begin{align}\label{eq:rrr}
\lim_{t\to 0}\varphi(t) =0,\qquad \lim_{t\to 0} \left(\varphi'(t) +\alpha \frac{\varphi(t)}{t}\right)=0,
\end{align}
Then  $\varphi$ is of class $C^{1}$ on $I$ and $\varphi'(0) =0$.
\end{lem}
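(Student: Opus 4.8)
The plan is to solve the first-order linear ODE that the hypothesis \eqref{eq:rrr} encodes, and then read off the boundary behaviour at $0$. Write $\psi(t):=\varphi'(t)+\alpha\varphi(t)/t$, so that by assumption $\psi(t)\to 0$ as $t\to 0$ and $\psi$ is continuous on $I\setminus\{0\}$. Multiplying the defining relation by the integrating factor $t^{\alpha}$ (working separately on $t>0$ and $t<0$, using $|t|^{\alpha}$ to be safe on the negative side), one gets $\tfrac{d}{dt}\bigl(t^{\alpha}\varphi(t)\bigr)=t^{\alpha}\psi(t)$ for $t>0$. The point of this manipulation is that the right-hand side is now a continuous function vanishing at $0$, so it has a clean antiderivative.

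Next I would integrate this identity from $0$ to $t$. Since $\varphi(t)\to 0$, the product $t^{\alpha}\varphi(t)\to 0$ as $t\to 0^{+}$, so the boundary term at the lower endpoint vanishes and we obtain
\begin{equation*}
t^{\alpha}\varphi(t)=\int_{0}^{t}s^{\alpha}\psi(s)\,\mathrm{d}s,\qquad
\text{hence}\qquad
\varphi(t)=\frac{1}{t^{\alpha}}\int_{0}^{t}s^{\alpha}\psi(s)\,\mathrm{d}s.
\end{equation*}
This is the representation formula on which everything rests. From here the derivative at $0$ is computed directly: the difference quotient is
\begin{equation*}
\frac{\varphi(t)-0}{t}=\frac{1}{t^{\alpha+1}}\int_{0}^{t}s^{\alpha}\psi(s)\,\mathrm{d}s,
\end{equation*}
and since $\psi(s)\to 0$, for any $\eta>0$ we have $|\psi(s)|\le\eta$ for $s$ small, whence the integral is bounded by $\eta\,t^{\alpha+1}/(\alpha+1)$ in absolute value. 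Therefore the difference quotient is bounded by $\eta/(\alpha+1)$, which shows $\varphi'(0)$ exists and equals $0$.

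Finally, to upgrade differentiability at $0$ to $C^{1}$ regularity, I would check continuity of $\varphi'$ at $0$. From $\varphi'(t)=\psi(t)-\alpha\varphi(t)/t$ and the representation formula, $\varphi(t)/t=t^{-(\alpha+1)}\int_{0}^{t}s^{\alpha}\psi(s)\,\mathrm{d}s\to 0$ by the same estimate (its limit is $\varphi'(0)=0$), while $\psi(t)\to 0$ by hypothesis; hence $\varphi'(t)\to 0=\varphi'(0)$. The negative-time side is handled identically with $|t|^{\alpha}$. The only mild obstacle is bookkeeping the two signs of $t$ and ensuring the integrating-factor computation is valid across them, but no genuine difficulty arises since the hypotheses are symmetric in the relevant sense.
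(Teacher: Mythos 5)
Your proof is correct, but it follows a genuinely different route from the paper's. You solve the ODE explicitly: setting $\psi=\varphi'+\alpha\varphi/t$ and multiplying by the integrating factor $|t|^{\alpha}$, you obtain the representation $\varphi(t)=t^{-\alpha}\int_{0}^{t}s^{\alpha}\psi(s)\,\mathrm{d}s$ (for $t>0$; the boundary term at $0$ vanishes because $\alpha>0$ and $\varphi\to 0$), from which the estimate $|\varphi(t)/t|\le \sup_{0<s\le t}|\psi(s)|/(\alpha+1)$ and hence $\varphi'(0)=0$ and continuity of $\varphi'$ follow directly. The paper instead argues by contradiction without any integration: assuming $|\alpha\varphi(t_{0})/t_{0}|>\varepsilon$ at some small $t_{0}$ while $|\varphi'+\alpha\varphi/t|\le\varepsilon$ on $(0,t_{0}]$, it deduces that $\varphi'(t_{0})$ and $\varphi(t_{0})/t_{0}$ have opposite signs, so $(\varphi^{2})'(t_{0})\le 0$; this sign condition propagates down to $0$, forcing $\varphi^{2}$ to be non-increasing on $(0,t_{0}]$ with limit $0$ at the origin, hence identically zero there, a contradiction. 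Your argument is quantitative (it is essentially the elementary Tauberian fact that $\varphi'+\alpha\varphi/t\to L$ implies $\varphi/t\to L/(\alpha+1)$) and arguably more transparent; the paper's is purely qualitative and avoids the representation formula at the cost of a slightly delicate monotonicity-propagation step. Both handle the two signs of $t$ symmetrically, and both yield the same conclusion.
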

\begin{proof} It is sufficient to prove that  
\begin{equation}\label{eq:lim0}
\lim_{t\to 0}\frac{ \varphi(t)}{t}=0
\end{equation}
Indeed this implies that $\varphi$ is differentiable at zero and $\varphi'(0)=0$. Moreover from \eqref{eq:rrr} we get $\lim_{t\to 0}\varphi'(t) =0$, i.e., $\varphi$ is of class $C^{1}$ on $I$. 
If \eqref{eq:lim0} is not true, then there exists $t_0\neq 0$ arbitrarily small such that
\begin{align}\label{eq:SIE}
\left\lvert \alpha\frac{ \varphi(t_0)}{t_0}\right\rvert > \varepsilon.
\end{align}
We can assume $t_{0}>0$, the case $t_{0}<0$ being similar. By the second identity in \eqref{eq:rrr} we can choose $t_{0}$ such that for $0< t\leq t_{0}$
\begin{align}\label{eq:SIE1}
\left\lvert \varphi'(t)+\alpha \frac{\varphi(t)}{t}\right\rvert \leqslant \varepsilon.
\end{align}
By combining \eqref{eq:SIE} and \eqref{eq:SIE1}, we obtain that $\varphi(t_0)/t_0$ and $\varphi'(t_0)$ have opposite signs which implies, since $t_0>0$, that $\left(\varphi^2\right)'(t_0)\leqslant 0$. As a consequence
\begin{align*}
\frac{\text{d}}{\text{d}t}\bigg|_{t=t_0} \left\lvert \alpha\frac{\varphi(t)}{t}\right\rvert \leqslant 0.
\end{align*}
Therefore \eqref{eq:SIE} holds for every $t\in(0,t_0]$, and $\left(\varphi^2\right)'(t)\leqslant 0$ for every $t\in(0,t_0]$. But $\lim_{t\to 0}\varphi(t)=0$, hence $\varphi$ vanishes identically. This is in contradiction with \eqref{eq:SIE}. 
\end{proof}
\subsection{Proof of Theorem \ref{t:main}}
Thanks to Proposition~\ref{thetaC2}, the assumptions of Proposition \ref{PropTaylorTheta} are satisfied with the additional property that 
 \begin{align*}
   \theta''(0) &=\frac{k_{\hc}(0)}{6}.
  \end{align*}
%


\appendix

\section{Jacobi fields and asymptotics of the Lie brackets} \label{s:appjac}
In what follows we discuss asymptotics of sub-Riemannian Jacobi fields. In this appendix we give a self-contained presentation to prove Proposition~\ref{asymptoticsLieprojected}, but we refer \cite{agrachev2018curvature} (see also \cite{BRconnection} for a survey) for more general results, which contain in particular Lemma~\ref{LJ0P} presented below.

We denote   $S_{p}^{*}M:= H^{-1}(1/2) \cap T^*_{p}\M $, and we set
 \begin{align}
  \mathfrak{S}:=S_{p}^{*}M\bigcup \left\lbrace \mathrm{d}_{q}\delta \mid q\in \Sigma_p \right\rbrace.
 \end{align}
 where we recall $\mathrm{d}_{q}\delta$ denotes the differential of $\delta$ at $q$.
We can interpret $\mathfrak{S}$ as the union of the integral lines of the Hamiltonian flow that are the lifts to $T^*\M$ of geodesics leaving from $p$ that are parametrized by arc length and that have not yet reached their cut time. 
\begin{prop}\label{mapmathfrakS} Let $\Phi_{\vec H}^{t}$ denote the flow of $\vec H$. The map
\begin{align*}
 F :  \mathfrak{S}\to F( \mathfrak{S} ) \subset S^*_{p}\M  \times \mathbb{R},\qquad 
  F(\lambda)= \left( \Phi_{-\vec{H}}^{\delta(\pi(\lambda))}( \lambda ), \delta( \pi( \lambda ) ) \right),
\end{align*}
is a diffeomorphism whose inverse is :
\begin{align*}
F^{-1} : F( \mathfrak{S} ) \to \mathfrak{S},\qquad 
F^{-1}( \xi, \overline{\delta} )= \Phi_{\vec{H}}^{\overline{\delta}}\left( \xi \right).
\end{align*}
\end{prop}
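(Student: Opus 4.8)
The plan is to verify that $F$ and the stated $F^{-1}$ are mutually inverse bijections, and then to obtain the diffeomorphism property by recognising $F^{-1}$ as the restriction of the globally smooth geodesic flow map and applying the embedding criterion. I will use three facts from the main text: $H$ is constant along the flow of $\vec{H}$ and equals $1/2$ on lifts of arc length geodesics; for $q\in\Sigma_p$ one has $\mathrm{d}_q\delta=\overline{\geo}(\delta(q))$ for the unique minimizing lift $\overline{\geo}$ reaching $q$, see \eqref{eq:ddelta}; and $M\setminus\mathrm{cut}(p)=\Sigma_p$, with $\delta$ smooth there.

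I would first check that $F$ is well defined and that $F^{-1}\circ F=\mathrm{id}$. If $\lambda\in S^*_pM$ then $\pi(\lambda)=p$, so $\delta(\pi(\lambda))=0$ and $F(\lambda)=(\lambda,0)$. If $\lambda=\mathrm{d}_q\delta$ with $q\in\Sigma_p$, then by \eqref{eq:ddelta} we may write $\lambda=\Phi_{\vec{H}}^{\delta(q)}(\xi)$ with $\xi:=\overline{\geo}(0)\in S^*_pM$; since $\pi(\lambda)=q$, flowing back by $\delta(q)$ yields $\Phi_{-\vec{H}}^{\delta(q)}(\lambda)=\xi\in S^*_pM$, so $F(\lambda)=(\xi,\delta(q))$ indeed lands in $S^*_pM\times\mathbb{R}$. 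In both cases the group law of the flow gives $F^{-1}(F(\lambda))=\Phi_{\vec{H}}^{\delta(\pi(\lambda))}(\Phi_{-\vec{H}}^{\delta(\pi(\lambda))}(\lambda))=\lambda$. The reverse composition $F\circ F^{-1}=\mathrm{id}$ is the delicate point: for $(\xi,\bar\delta)\in F(\mathfrak{S})$ I need that $\geo_\xi$ is minimizing up to time $\bar\delta$, so that $\delta(\pi(\Phi_{\vec{H}}^{\bar\delta}(\xi)))=\delta(\geo_\xi(\bar\delta))=\bar\delta$ with $\geo_\xi(\bar\delta)\in\Sigma_p$, and then $\Phi_{-\vec{H}}^{\bar\delta}(\Phi_{\vec{H}}^{\bar\delta}(\xi))=\xi$ closes the loop. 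This uses the identification $F(\mathfrak{S})=\{(\xi,s):\xi\in S^*_pM,\ 0\le s<t_{\mathrm{cut}}(\geo_\xi)\}$, a consequence of $M\setminus\mathrm{cut}(p)=\Sigma_p$ and of the absence of conjugate points strictly before the cut time.

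For the smooth structure I would turn the problem around and study $G(\xi,s):=\Phi_{\vec{H}}^{s}(\xi)$, which is smooth on all of $S^*_pM\times\mathbb{R}$ and satisfies $G=F^{-1}$ on $F(\mathfrak{S})$. The map $G$ is an immersion everywhere, including along the boundary $s=0$: the horizontal part $\pi_*\vec{H}$ is the nonzero geodesic velocity, so $\vec{H}$ is transverse to the fibre $T^*_pM\supset S^*_pM$ at $s=0$, and this transversality to the flowed surface $\Phi_{\vec{H}}^{s}(S^*_pM)$ persists for all $s$ because $\mathrm{d}\Phi_{\vec{H}}^{s}\,\vec{H}=\vec{H}$. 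On $\Omega:=\{(\xi,s):0\le s<t_{\mathrm{cut}}(\geo_\xi)\}$ the map $G$ is injective: if $G(\xi_1,s_1)=G(\xi_2,s_2)$ then the common base point $q=\geo_{\xi_1}(s_1)=\geo_{\xi_2}(s_2)$ lies in $\Sigma_p$ and is reached by two arc length minimizers, each minimizing since $s_i<t_{\mathrm{cut}}$, forcing $\xi_1=\xi_2$ and $s_1=s_2=\delta(q)$ by uniqueness of the minimizer (part (b) of the theorem on $\delta_p$). Finally $F$ is continuous on $\mathfrak{S}$, being a composition of the continuous maps $\delta$, $\pi$ and the flow; hence $G|_\Omega$ is a continuous injective immersion that is a homeomorphism onto its image $\mathfrak{S}$, therefore an embedding. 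Consequently $\mathfrak{S}$ is an embedded submanifold (with boundary $S^*_pM$) and $F=(G|_\Omega)^{-1}$ is smooth.

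The step I expect to be the main obstacle is the behaviour at the boundary component $S^*_pM$ over $p$: there $\delta$ is only Lipschitz, the radial fields $\rvf,J\rvf$ degenerate, and $F(\mathfrak{S})$ is a manifold with boundary rather than an open set. This is exactly why I favour the embedding route over differentiating $F=(\Phi_{-\vec{H}}^{\delta\circ\pi},\delta\circ\pi)$ directly: it deduces the smoothness of $F$ from that of $G$ together with the bijection, and the only additional input required is that $\Omega$ is a bona fide manifold with boundary, which rests on the continuity of the cut time in the contact setting (no abnormal minimizers, cf.\ \cite[Proposition~8.76]{ABB}).
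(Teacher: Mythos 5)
Your proposal is correct, but there is nothing in the paper to measure it against: Proposition~\ref{mapmathfrakS} is stated without proof and is immediately used to declare $(\xi,\overline{\delta})$ coordinates on $\mathfrak{S}$, so your argument supplies a justification the authors omit, and it is the natural one. Two remarks on economy and on where the real content lies. First, the set-theoretic part is lighter than you suggest: once $G\circ F=\mathrm{id}_{\mathfrak{S}}$ is established for $G(\xi,s)=\Phi_{\vec H}^{s}(\xi)$ (which follows from \eqref{eq:ddelta} and the group law of the flow), $F$ is automatically a bijection onto its image with inverse $G|_{F(\mathfrak{S})}$, so the ``delicate'' composition $F\circ F^{-1}=\mathrm{id}$ costs nothing; the genuinely useful extra step is your identification $F(\mathfrak{S})=\Omega=\{(\xi,s):\ 0\le s<t_{\mathrm{cut}}(\gamma_\xi)\}$, which is what gives the domain of $F^{-1}$ a manifold-with-boundary structure. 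Second, your smoothness argument correctly isolates the two nontrivial inputs: (i) $G$ is an immersion on all of $S^*_pM\times\mathbb{R}$ because $\pi_*\vec H\neq 0$ on $H^{-1}(1/2)$ makes $\vec H$ transverse to the vertical tangent space of $S^*_pM$, and $d\Phi_{\vec H}^{s}\vec H=\vec H$ propagates this transversality along the flow; (ii) $\Omega$ is open in $S^*_pM\times[0,+\infty)$ by the continuity of the cut time in the absence of abnormal minimizers, \cite[Proposition~8.76]{ABB}, the same fact the paper invokes in the proof of Proposition~\ref{Proplimith0}. With $F$ continuous (a composition of $\delta$, $\pi$ and the flow) and $G|_{\Omega}$ an injective immersion, $G|_{\Omega}$ is an embedding, $\mathfrak{S}$ is an embedded submanifold with boundary $S^*_pM$, and $F=(G|_{\Omega})^{-1}$ is smooth. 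I see no gap.
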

We can see $( \xi,\overline{\delta})$ as coordinates on the set $\mathfrak{S}$. The function $\delta$ is thereby transported from its initial domain $\Sigma_p$ to   $\mathfrak{S}$, since $\overline{\delta}=\delta\circ\pi$.
In the coordinates $( \xi,\overline{\delta})$ 
 \begin{align}\label{partialdeltabar}
 \frac{\partial}{\partial \overline{\delta}} = \vec{H},\qquad 
\pi_* \circ \vec{H} = \rvf\circ\pi.
\end{align}
where $\pi_{*}$ denotes the differential of $\pi:TM\to M$.

\begin{rem}
As a consequence of  definitions 
 \eqref{defliftedfield} and \eqref{defpartialhXi}, for every orthonormal frame of the distribution $\{ X_1,X_2 \}$ and $ i,j,k = 0,1,2$ we have
\begin{align}
\left[ \overline{X}_i,\overline{X}_j \right]=\overline{\left[ X_i, X_j \right]},
\end{align}
which in turn implies
\begin{align}
c_{\overline{X}_i,\overline{X}_j}^{\overline{X}_k}= \overline{c}_{X_i,X_j}^{X_k},\qquad 
\left[ \frac{\partial}{\partial h_{X_i}} ,\overline{X}_j \right] = \left[ \frac{\partial}{\partial h_{X_i}} , \frac{\partial}{\partial h_{X_j}} \right]=0.
\end{align}
\end{rem}

We can now introduce Jacobi fields.\footnote{This is a vector field on $T^{*}M$, along the lift of a geodesic. To recover the classical notion of Jacobi field on $M$, one should consider the projection $J=\pi_{*}(\mathcal{J})$ onto $M$.} 
\begin{de}
A vector field $\mathcal{J}$ defined along  an integral line $\overline{\geo}:I \rightarrow T^*\M$ of the Hamiltonian field is said to be a \emph{Jacobi field} if the Lie derivative $\mathcal{L}_{\vec{H}}{\mathcal{J}}=0$ along $\overline{\geo}$.
\end{de}
We need the following result.
\begin{lem} \label{LJ0P}
 Let us consider $\{ X_1,X_2 \}$  an orthonormal frame of the distribution. 
 There exist two smooth vector fields $\mathcal{J}^{\perp} : \mathfrak{S}\rightarrow T\mathfrak{S}$ and $\mathcal{J}^{0} : \mathfrak{S}\rightarrow T\mathfrak{S}$
\begin{align}\label{eq:LJ0P}
 \mathcal{J}^i = \alpha^i \overline{X}_1 +\beta^i \overline{X}_2 +\sigma^i \overline{X}_0 + j^i_1 \frac{\partial}{\partial h_{X_1}} + j^i_2 \frac{\partial}{\partial h_{X_2}} +  j^i_0 \frac{\partial}{\partial h_{X_0}},\quad i\in\left\{ \perp, 0 \right\},
\end{align}
 that satisfy for $i\in\left\{ \perp, 0 \right\}$ the following conditions
\begin{itemize} 
\item[(i)]  $\mathcal{J}^i$ is a Jacobi field, i.e., $[\mathcal{J}^i,\vec{H}]=0$.
\item[(ii)] for every $\xi\in S^*_p\M$ we have
 $\pi_{*}\left(\mathcal{J}^i(\xi)\right) =0$ ($\mathcal{J}^i$ is vertical at zero) and
\begin{align}
 \sigma^{\perp}\circ F^{-1}\left( \xi,\overline{\delta} \right)\sim \frac{\overline{\delta}^2}{2}\quad \text{ and }\quad \sigma^{0}\circ F^{-1}\left( \xi,\overline{\delta} \right)\sim -\frac{\overline{\delta}^3}{6}.
\end{align}
\end{itemize}
Moreover, the functions $\sigma^i$ are smooth and are independent of the choice of $\{ X_1 , X_2 \}$.
 \end{lem}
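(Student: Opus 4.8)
The plan is to construct $\mathcal{J}^{\perp}$ and $\mathcal{J}^{0}$ as the flow-invariant extensions of two intrinsic vertical vectors attached to each $\xi\in S^*_p\M$, to derive the linear ODE satisfied by their components in the lifted frame, and finally to read off the asymptotics of the $\overline X_0$-component by comparison with the nilpotent (Heisenberg) model.

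\textbf{Construction.} A vector field along an integral line $\overline\geo$ of $\vec H$ is a Jacobi field if and only if it is invariant under the flow $\Phi^{t}_{\vec H}$; it is therefore uniquely determined by its value at $\overline\delta=0$, and its smooth dependence on $(\xi,\overline\delta)$ follows from the smooth dependence of the Hamiltonian flow on initial data. I would take $\mathcal{J}^{0}$ to be the Jacobi field whose value at $\xi$ is $\partial/\partial h_{X_0}$, and $\mathcal{J}^{\perp}$ the one whose value at $\xi$ is the fibre-rotation generator $-h_{X_2}\,\partial/\partial h_{X_1}+h_{X_1}\,\partial/\partial h_{X_2}$ (with the orientation fixed so as to produce the sign required in (ii)). Both initial vectors are vertical and tangent to the two-dimensional manifold $S^*_p\M=\{h_{X_1}^2+h_{X_2}^2=1\}$, and they span $T_\xi S^*_p\M$; since $\mathfrak S$ is a union of integral lines of $\vec H$, their flow-extensions stay tangent to $\mathfrak S$ and are vertical at $\overline\delta=0$, so the conditions in (ii), namely $\pi_*\mathcal{J}^i(\xi)=0$, hold by construction.

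\textbf{The variational ODE.} In the coordinates $(\xi,\overline\delta)$ one has $\vec H=\partial/\partial\overline\delta$ by \eqref{partialdeltabar}; writing $\mathcal{J}^i$ in the lifted frame as in \eqref{eq:LJ0P} and imposing $\mathcal L_{\vec H}\mathcal{J}^i=0$ yields $\sum_a \dot{(\,\cdot\,)}E_a=-\sum_a (\,\cdot\,)\,[\vec H,E_a]$ along each geodesic. Using the expression \eqref{expressionHamiltonian} of $\vec H$ together with the bracket relations $[\overline X_i,\overline X_j]=\overline{[X_i,X_j]}$ and $[\partial/\partial h_{X_i},\overline X_j]=[\partial/\partial h_{X_i},\partial/\partial h_{X_j}]=0$, I would compute each $[\vec H,E_a]$ and so obtain a first order linear system for the six coefficients $(\alpha^i,\beta^i,\sigma^i,j^i_0,j^i_1,j^i_2)$. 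The equation for the $\overline X_0$-coefficient takes the schematic form $\dot\sigma^i=\alpha^i h_{X_2}-\beta^i h_{X_1}+(\text{terms that vanish as }\overline\delta\to0)$; it is driven by the lower components $\alpha^i,\beta^i$, which are in turn governed by $j^i_1,j^i_2$, and the vertical initial data force $\alpha^i(0)=\beta^i(0)=\sigma^i(0)=0$.

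\textbf{Asymptotics via the flat model.} As $\overline\delta\to0$ the geodesic tends to $p$, so the structure functions evaluated along it tend to their values at $p$; by the normalization one has $c_{12}^0\equiv1$, $c_{0i}^0\equiv0$, and in normal coordinates $c_{12}^1(p)=c_{12}^2(p)=0$, so the coefficient matrix of the system converges to that of the left-invariant Heisenberg system, in which $h_{X_1}=\cos(h_{X_0}\overline\delta)$, $h_{X_2}=\sin(h_{X_0}\overline\delta)$ with $h_{X_0}$ constant. Solving this constant-coefficient system explicitly with the prescribed initial data (integrating $j^i_1,j^i_2$, then $\alpha^i,\beta^i$, then $\sigma^i$) gives $\sigma^{\perp}=\tfrac{\overline\delta^2}{2}+o(\overline\delta^2)$ and $\sigma^{0}=-\tfrac{\overline\delta^3}{6}+o(\overline\delta^3)$. \textbf{The main obstacle is here}: one must verify that the true geometry perturbs these leading terms only at higher order. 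Since $\sigma^i$ is smooth in $\overline\delta$ (by smoothness of the flow) and the deviation of the coefficient matrix from its Heisenberg limit is $O(\overline\delta)$, a short order-tracking of the Taylor coefficients (or a Gr\"onwall estimate) shows that the torsion and curvature terms affect $\sigma^{\perp}$ only from order $\overline\delta^3$ and $\sigma^{0}$ only from order $\overline\delta^4$ on, leaving the universal coefficients $\tfrac12$ and $-\tfrac16$ unchanged.

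\textbf{Intrinsicness and smoothness.} Finally, an orthonormal change of frame fixes $X_0$ and rotates $X_1,X_2$, hence preserves each summand of the splitting $T(T^*\M)=\langle\overline X_0\rangle\oplus\langle\overline X_1,\overline X_2\rangle\oplus\mathrm{Vert}$, and it also fixes the two initial vectors used above (the fibre-rotation generator is the canonical generator of the $S^1$-action on the fibres, while $\partial/\partial h_{X_0}$ is intrinsic because $h_{X_0}$ is canonical and the induced change of fibre coordinates is block-diagonal). Therefore $\mathcal{J}^{\perp},\mathcal{J}^{0}$ are intrinsic, and $\sigma^i$, being the component of an intrinsic smooth vector field along the canonical line $\langle\overline X_0\rangle$, is a smooth function independent of the choice of $\{X_1,X_2\}$.
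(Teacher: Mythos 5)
Your construction coincides with the paper's: both $\mathcal{J}^{0}$ and $\mathcal{J}^{\perp}$ are defined by prescribing vertical initial data on $S^*_p\M$ and extending by the flow of $\vec{H}$ (the paper takes $\mathcal{J}^{\perp}|_{S^*_p\M}=h_{X_2}\frac{\partial}{\partial h_{X_1}}-h_{X_1}\frac{\partial}{\partial h_{X_2}}$, the opposite sign to yours; with your orientation one would get $\sigma^{\perp}\sim-\overline{\delta}^2/2$, so your hedge about fixing the orientation is genuinely needed). Where you diverge is in extracting the asymptotics of $\sigma^i$. The paper writes the full linear system for the six coefficients coming from $[\mathcal{J}^i,\vec{H}]=0$ and simply evaluates $\vec{H}^n\sigma^i$ at $\xi\in S^*_p\M$ for $n\le 3$: since $\alpha^i=\beta^i=\sigma^i=0$ there, every structure function except the normalized $c_{12}^{0}=1$ drops out of the first nonvanishing derivative, giving $\vec{H}^2\sigma^{\perp}(\xi)=1$ and $\vec{H}^3\sigma^{0}(\xi)=-1$ exactly, with no model comparison. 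Your route --- pass to the Heisenberg limit of the coefficient matrix and argue that the true structure functions perturb only at higher order --- reaches the same answer, but the perturbation claim you flag as ``the main obstacle'' (that torsion and curvature enter $\sigma^{\perp}$ only from order $\overline{\delta}^3$ and $\sigma^{0}$ only from order $\overline{\delta}^4$) is precisely the content of that low-order Taylor computation, and you assert it rather than carry it out; note also that invoking $c_{12}^1(p)=c_{12}^2(p)=0$ in normal coordinates does not quite fit a lemma stated for an arbitrary orthonormal frame --- what actually makes the claim true is that these terms multiply quantities such as $\vec{H}\sigma^i$, which already vanish to sufficiently high order at $\overline{\delta}=0$. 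Finally, for frame-independence the paper has a one-line argument, $\sigma^i=\omega\circ\mathrm{d}\pi(\mathcal{J}^i)$, which is cleaner than tracking the behaviour of the splitting under frame rotations; your argument is correct but more laborious. In short: same construction, a less direct and not fully executed (though repairable) derivation of the key asymptotics.
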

\begin{proof}
 By combining the expression of $\vec{H}$ given by \eqref{expressionHamiltonian} and that of $\mathcal{J}^i$, we can reformulate the condition $[\mathcal{J}^i,\vec{H}]=0$ by decomposing it on the frame $$\left( \overline{X}_0, \overline{X}_1,\overline{X}_2,\frac{\partial}{\partial h_{X_0}}, \frac{\partial}{\partial h_{X_1}},\frac{\partial}{\partial h_{X_2}} \right).$$ The corresponding system of differential equation is given by:
 \begin{align}\label{systemJacobi}
\left\lbrace
\begin{array}{rcl}
\vec{H}\sigma^i &=& h_{X_2}\alpha^i -h_{X_1}\beta^i\\
\vec{H}\alpha^i &=& \left( h_{X_2}\alpha^i -h_{X_1}\beta^i \right) \overline{c}_{1,2}^{1} +h_{X_1} \sigma^i \eta\left( X_1 \right) +h_{X_2} \sigma^i \overline{c}_{0,2}^{1} +j_1^i\\
\vec{H}\beta^i &=& \left( h_{X_2}\alpha^i -h_{X_1}\beta^i \right) \overline{c}_{1,2}^{2} +h_{X_2} \sigma^i \eta\left( X_2 \right) +h_{X_1} \sigma^i \overline{c}_{0,1}^{2} +j_2^i\\
\vec{H}j_1^i &=&-\sum_{k}\left( h_{X_2} h_{X_k}\left(\alpha^i X_1 + \beta^i X_2 +\sigma^i X_0 \right)\overline{c}_{1,2}^{k}\right.\\
&&\qquad \qquad\qquad\qquad\quad\left. +\overline{c}_{1,2}^{k} \left(j_2^i h_{X_k} +h_{X_2} j_k^i \right)\right)\\
\vec{H}j_2^i &=&-\sum_{k}\left( h_{X_1} h_{X_k}\left(\alpha^i X_1 + \beta^i X_2 +\sigma^i X_0 \right)\overline{c}_{2,1}^{k}\right.\\
&&\qquad \qquad\qquad\qquad\quad\left. +\overline{c}_{2,1}^{k} \left(j_1^i h_{X_k} +h_{X_1} j_k^i \right)\right)\\
\vec{H} j_0^i &=&\sum_{k, \ell\neq 0}  \left( h_{X_\ell} h_{X_k}\left(\alpha^i X_1 + \beta^i X_2 +\sigma^i X_0 \right)\overline{c}_{0,\ell}^{k}\right.\\
&&\qquad \qquad\qquad\qquad\qquad\left. +\overline{c}_{0,j}^{k} \left(j_\ell^i h_{X_k} +h_{X_\ell} j_k^i \right)\right).
\end{array}\right.
\end{align}

In order to define the vector fields $\mathcal{J}^{\perp}$ and $\mathcal{J}^{0}$, it is then sufficient to define their values on $S^*_{p}\M$, the values on the whole space $F(\mathfrak{S})$ then following from the differential equation \eqref{systemJacobi}.
We define on $S^*_{p}\M$
\begin{align*}
\mathcal{J}^{\perp}\big|_{S^*_{p}\M}=h_{X_2}\frac{\partial}{\partial h_{X_1}}-h_{X_1}\frac{\partial}{\partial h_{X_2}},\qquad \mathcal{J}^{0}\big|_{S^*_{p}\M}=\frac{\partial}{\partial h_{X_0}}.
\end{align*}

We now use \eqref{systemJacobi} to establish the asymptotics of $\sigma^0$ and $\sigma^\perp$ by computing the derivatives and evaluating  at zero. We find out that for any $\xi$ in $S^*_{p}\M$
\begin{align*}
\sigma^{i}\left( \xi \right)&=0, \\
\vec{H}( \sigma^{i} )\left( \xi \right) &=  h_{X_2}(\xi)\alpha^{i}(\xi) -h_{X_1}(\xi)\beta^{i}(\xi) =0, \\
\vec{H}^2( \sigma^{i} )\left( \xi \right) &=  h_{X_2}(\xi) \vec{H}( \alpha^{i} )(\xi) - h_{X_1}(\xi) \vec{H}( \beta^{i} )(\xi)\\
&=h_{X_2}(\xi) j_1^i(\xi) - h_{X_1}(\xi) j_2^i(\xi),
\end{align*}
where we used $\alpha^{i}(\xi)=\beta^{i}(\xi)=\sigma^{i}(\xi)=0$ at $S^*_{p}\M$. Using $\xi\in H^{-1}\left( 1/2 \right)$, we have
\begin{align*}
\vec{H}^2( \sigma^{\perp} )\left( \xi \right)&= h_{X_2}^2(\xi) +h_{X_1}^2(\xi)= 1,\qquad  \vec{H}^2( \sigma^{0} )\left( \xi \right)=0.
\end{align*}
Furthermore, using again $\xi\in H^{-1}\left( 1/2 \right)$
\begin{align*}
\vec{H}^3( \sigma^{0} )\left( \xi \right)&= h_{X_2}(\xi) \vec{H}( j_1^i )(\xi) - h_{X_1}(\xi) \vec{H} ( j_2^i )(\xi)\\
&= - h_{X_2}^2(\xi) -h_{X_1}^2(\xi)= - 1.
\end{align*}
Now, thanks to the first identity in \eqref{partialdeltabar}
\begin{align*}
\vec{H}^n ( \sigma^i )\left( \xi \right) ={\frac{\partial^n}{\partial \overline{\delta}^n}}\bigg|_{\overline{\delta} =0}\sigma^i \circ F^{-1} \left( \xi,\overline{\delta} \right),
\end{align*}
which proves the asymptotics in (ii). The fact that the functions $\sigma^i$ are smooth and independent of the choice of $\left( X_1,X_2 \right)$ is due to the identity $
 \sigma^i=\omega\circ\text{d}\pi\left( \mathcal{J}^i \right)$.
\end{proof}

\begin{lem}\label{LFGJG}
We have
$  \mathfrak{S} \setminus S^*_p\M  \subset h_{\rvf}^{-1}\left( 1 \right) \cap h_{J\rvf}^{-1}\left( 0 \right)$.
\end{lem}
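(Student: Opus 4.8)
The plan is to unwind the definitions and reduce everything to the two identities $\rvf\delta=1$ and $J\rvf\delta=0$ recorded in Lemma~\ref{propgraddelta}(i). First I would identify the left-hand set concretely: the two families constituting $\mathfrak{S}$ are the fiber $S^*_p\M$, all of whose covectors are based at $p$, and the family $\{\mathrm{d}_q\delta \mid q\in\Sigma_p\}$, whose covectors are based at the points $q\in\Sigma_p$. Since $p\in\mathrm{cut}(p)$, we have $p\notin\Sigma_p$, so these two families sit over disjoint base points; consequently they are disjoint subsets of $T^*\M$, and removing $S^*_p\M$ leaves exactly
$$\mathfrak{S}\setminus S^*_p\M=\{\mathrm{d}_q\delta\mid q\in\Sigma_p\}.$$
In particular every $\lambda$ in this set is based at a point $q=\pi(\lambda)\in\Sigma_p$, where the radial field $\rvf$ and hence $J\rvf$ are smooth, so the fiber-linear functions $h_{\rvf}$ and $h_{J\rvf}$ are well defined at $\lambda$.

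Next I would simply evaluate. By the definition \eqref{defcoordinateh} of the fiber-linear functions, for $\lambda=\mathrm{d}_q\delta$ with $q=\pi(\lambda)\in\Sigma_p$ one has
$$h_{\rvf}(\lambda)=\langle \mathrm{d}_q\delta,\rvf(q)\rangle=(\rvf\delta)(q),\qquad h_{J\rvf}(\lambda)=\langle \mathrm{d}_q\delta,J\rvf(q)\rangle=(J\rvf\delta)(q),$$
the right-hand sides being the directional derivatives of $\delta$ along $\rvf$ and $J\rvf$ at $q$. These are precisely the quantities computed in Lemma~\ref{propgraddelta}(i), which asserts $\rvf\delta=\|\rvf\|^2=1$ and $J\rvf\delta=g(J\rvf,\rvf)=0$ on all of $\Sigma_p$. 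Hence $h_{\rvf}(\lambda)=1$ and $h_{J\rvf}(\lambda)=0$, that is $\lambda\in h_{\rvf}^{-1}(1)\cap h_{J\rvf}^{-1}(0)$, which gives the claimed inclusion.

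Since the argument is a direct substitution, there is no genuine analytic difficulty. The only point requiring care is the bookkeeping in the first step: one must check that the set difference $\mathfrak{S}\setminus S^*_p\M$ is clean, i.e.\ that the two pieces of $\mathfrak{S}$ really do live over disjoint base points, so that what remains consists exactly of differentials $\mathrm{d}_q\delta$ based at points of $\Sigma_p$, where $\rvf$ and $J\rvf$ are defined and smooth and where Lemma~\ref{propgraddelta}(i) applies.
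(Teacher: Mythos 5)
Your proof is correct and follows essentially the same route as the paper: identify each element of $\mathfrak{S}\setminus S^*_p\M$ as $\mathrm{d}_q\delta$ for some $q\in\Sigma_p$, then evaluate $h_{\rvf}$ and $h_{J\rvf}$ via the definition \eqref{defcoordinateh} and Lemma~\ref{propgraddelta}(i). The extra bookkeeping about the two pieces of $\mathfrak{S}$ lying over disjoint base points is a harmless refinement of what the paper leaves implicit.
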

\begin{proof}
 Let us consider any covector in $\mathfrak{S}\setminus S^*_p\M$. It can be written as
$
 \text{d}\delta_q
$
for a certain $q$ in $\Sigma_p$ by definition of $\mathfrak{S}$.
Now if we choose $\left( \rvf,J\rvf\right)$ as a frame of the distribution, by using \eqref{defcoordinateh},
we can write thanks to Lemma \ref{propgraddelta}
\begin{equation*} 
h_{\rvf}\left( \text{d}\delta_q \right) =\text{d}\delta_q \left( \rvf \right)=1,
\qquad
h_{J\rvf}\left( \text{d}\delta_q \right) =\text{d}\delta_q \left( J\rvf \right)=0.  \qedhere
\end{equation*}
\end{proof}

We are now able to compute the asymptotics of the Lie brackets of the elements of the  frame $\left( \rvf,J\rvf, X_0 \right)$.

\begin{prop} \label{1aLie} The quantities $\overline{\delta} c_{\rvf,J\rvf}^{J\rvf}$ and $\overline{\delta}^2 c_{\rvf,X_0}^{J\rvf}$ (a priori defined on $\mathfrak{S}\setminus S^*_p\M $)  can be smoothly extended to $\mathfrak{S}$ and are respectively equal to $-4$ and $-6$ over $S^*_p\M$.
\end{prop}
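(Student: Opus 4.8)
The plan is to transport the Jacobi identity $[\mathcal{J}^i,\vec{H}]=0$ of Lemma~\ref{LJ0P} down to $M$ through $\pi$, and then to read the two bracket coefficients off the $X_0$- and $J\rvf$-components of the projected fields. The decisive structural point I would use is that, by Proposition~\ref{mapmathfrakS}, the projection $\pi$ restricts to a diffeomorphism of $\mathfrak{S}\setminus S^*_p\M$ onto $\Sigma_p$. Hence each $\mathcal{J}^i$ (which is tangent to $\mathfrak{S}$) is $\pi$-related to a genuine vector field $V^i:=\pi_*\mathcal{J}^i$ on $\Sigma_p$, while $\vec{H}$ is $\pi$-related to $\rvf$ by \eqref{partialdeltabar}. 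By naturality of the Lie bracket under $\pi$-related fields, $[\mathcal{J}^i,\vec{H}]=0$ forces $[V^i,\rvf]=0$ on $\Sigma_p$ for $i\in\{\perp,0\}$. From \eqref{eq:LJ0P} and $\pi_*\frac{\partial}{\partial h_{X_j}}=0$ I would write $V^i=a^i\rvf+b^i J\rvf+\sigma^i X_0$, observing that the $X_0$-component is exactly the frame-independent function $\sigma^i$, since $\sigma^i=\omega\circ\mathrm{d}\pi(\mathcal{J}^i)=\omega(V^i)$.

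Next I would expand $[V^i,\rvf]=0$ in the frame $(\rvf,J\rvf,X_0)$ and isolate two scalar identities. Using the normalization $c_{\rvf,J\rvf}^{X_0}=1$ together with $c_{\rvf,X_0}^{X_0}=0$, the $X_0$-component gives $b^i=-\rvf\sigma^i$, and, writing $A:=c_{\rvf,J\rvf}^{J\rvf}$ and $B:=c_{\rvf,X_0}^{J\rvf}$, the $J\rvf$-component gives $\rvf b^i+A\,b^i+B\,\sigma^i=0$. Since $\vec{H}=\partial/\partial\overline{\delta}$ is $\pi$-related to $\rvf$, the operator $\rvf$ acts on these functions as $\partial_{\overline{\delta}}$; thus $b^i=-\partial_{\overline{\delta}}\sigma^i$ and the second identity becomes $\partial_{\overline{\delta}}b^i+A\,b^i+B\,\sigma^i=0$. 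Taking $i=\perp$ and $i=0$ produces a pair of linear equations in the two unknowns $(A,B)$.

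To solve this system and simultaneously obtain the smooth extension, I would feed in the asymptotics of Lemma~\ref{LJ0P}. The vanishing of $\partial_{\overline{\delta}}^k\sigma^\perp$ for $k\le 1$ with $\partial_{\overline{\delta}}^2\sigma^\perp|_{0}=1$, and of $\partial_{\overline{\delta}}^k\sigma^0$ for $k\le 2$ with $\partial_{\overline{\delta}}^3\sigma^0|_{0}=-1$, let me apply Lemma~\ref{LemmaRegularity} to factor $\sigma^\perp=\overline{\delta}^2 s^\perp$, $\sigma^0=\overline{\delta}^3 s^0$, $b^\perp=\overline{\delta}\,\widehat b^\perp$, $b^0=\overline{\delta}^2\widehat b^0$ with smooth factors whose values at $\overline{\delta}=0$ are $\tfrac12,-\tfrac16,-1,\tfrac12$. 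Setting $P:=\overline{\delta}\,A$ and $Q:=\overline{\delta}^2 B$, all powers of $\overline{\delta}$ cancel and the system becomes
\[
\begin{pmatrix}\widehat b^\perp & s^\perp\\ \widehat b^0 & s^0\end{pmatrix}\begin{pmatrix}P\\ Q\end{pmatrix}=\begin{pmatrix}-\partial_{\overline{\delta}}b^\perp\\ -\bigl(2\widehat b^0+\overline{\delta}\,\partial_{\overline{\delta}}\widehat b^0\bigr)\end{pmatrix},
\]
with smooth coefficient matrix and right-hand side. At $\overline{\delta}=0$ the coefficient matrix is $\begin{pmatrix}-1 & \tfrac12\\ \tfrac12 & -\tfrac16\end{pmatrix}$, whose determinant $-\tfrac{1}{12}$ is nonzero, so the system is uniquely and smoothly solvable near $S^*_p\M$; this yields the smooth extensions of $P=\overline{\delta}\,c_{\rvf,J\rvf}^{J\rvf}$ and $Q=\overline{\delta}^2 c_{\rvf,X_0}^{J\rvf}$ across $S^*_p\M$ (away from $S^*_p\M$ both are already smooth). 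Evaluating at $\overline{\delta}=0$, where the right-hand side equals $(1,-1)$, gives $(P,Q)=(-4,-6)$, which is the assertion.

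The hard part will not be the algebra but the two regularity issues. First, I must justify cleanly that $\mathcal{J}^i$ descends to a bona fide vector field on $\Sigma_p$ and that the bracket relation passes to $M$, which rests entirely on the diffeomorphism of Proposition~\ref{mapmathfrakS}. Second, I must upgrade the leading-order asymptotics of $\sigma^\perp,\sigma^0$ to genuine smooth factorizations, so that $\overline{\delta}\,c_{\rvf,J\rvf}^{J\rvf}$ and $\overline{\delta}^2 c_{\rvf,X_0}^{J\rvf}$ extend smoothly and not merely admit limits; this is exactly what Lemma~\ref{LemmaRegularity} provides, while the non-degeneracy of the $2\times 2$ determinant guarantees the extension is unique and smooth.
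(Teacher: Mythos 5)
Your proposal is correct and is essentially the paper's own argument: the paper also reduces the Jacobi identity to the single scalar relation $\vec H^2\sigma^i+c_{\rvf,J\rvf}^{J\rvf}\vec H\sigma^i-c_{\rvf,X_0}^{J\rvf}\sigma^i=0$ (there obtained by specializing the system \eqref{systemJacobi} to the frame $(\rvf,J\rvf)$ and using Lemma~\ref{LFGJG} to kill the vertical components, rather than by pushing the fields down to $\Sigma_p$ as you do), then solves the same $2\times2$ system via Lemma~\ref{LemmaRegularity} and the asymptotics of $\sigma^{\perp},\sigma^{0}$, with the same invertible limit matrix and the same values $(-4,-6)$. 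The only cosmetic difference is where the bracket computation is performed (on the base versus on $T^{*}M$), and your numerics check out.
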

\begin{proof}
Let write the vector fields $\mathcal{J}^0$ and $\mathcal{J}^{\perp}$ introduced in Lemma \ref{LJ0P}over $\mathfrak{S}\setminus T^*_p\M$ with respect to the orthonormal frame $\Gamma,J\Gamma$: 
\begin{align*}
 \mathcal{J}^i = \alpha^i \overline{\rvf} +\beta^i \overline{J\rvf} +\sigma^i \overline{X}_0 + j^i_1 \frac{\partial}{\partial h_{\rvf}} + j^i_2 \frac{\partial}{\partial h_{J\rvf}} +  j^i_0 \frac{\partial}{\partial h_{X_0}},
\end{align*}
Since $\mathfrak{S}\setminus T^*_p\M$ is contained in $h_{\rvf}^{-1}\left( 1 \right) \cap h_{J\rvf}^{-1}\left( 0 \right)$ (by Lemma \ref{LFGJG}) we have
$ j^i_1 =j^i_2=0$,
and the first and the third equation of \eqref{systemJacobi} can be combined as
\begin{align*}
 0&=\vec{H}^2 \sigma^i + \overline{c}_{\rvf,J\rvf}^{J\rvf}\vec{H} \sigma^i +\overline{c}_{X_0,\rvf}^{J\rvf}\sigma^i.
\end{align*}
Since this last equation is satisfied by $\sigma^{0}$ and $\sigma^{\perp}$ we find out that 
\begin{align}\label{SystemdeltaLiebrackets}
\begin{pmatrix}
 -\vec{H}^2 \sigma^{\perp}\\
 -\frac{\vec{H}^2 \sigma^{0}}{\overline{\delta}} 
\end{pmatrix}
=\begin{pmatrix}
 \frac{\vec{H} \sigma^{\perp}}{\overline{\delta}} & \frac{\sigma^{\perp}}{\overline{\delta}^2}\\
 \frac{\vec{H} \sigma^{0}}{\overline{\delta}^2} & \frac{\sigma^{0}}{\overline{\delta}^3}
\end{pmatrix}
\begin{pmatrix}
 \overline{\delta} \overline{c}_{\rvf,J\rvf}^{J\rvf} \\
 \overline{\delta}^2 \overline{c}_{X_0,\rvf}^{J\rvf}
\end{pmatrix}.
\end{align}
 The matrix of this system as well as its left hand side are smooth over $\mathfrak{S}$ when $\overline{\delta}$ goes to zero  by applying  Lemma \ref{LemmaRegularity} to the asymptotics given in Lemma \ref{LJ0P}  (we use the first identity in \eqref{partialdeltabar}), and
 \begin{align*}
\begin{pmatrix}
 -\vec{H}^2 \sigma^{\perp}\\
 -\frac{\vec{H}^2 \sigma^{0}}{\overline{\delta}} 
\end{pmatrix} \to   \begin{pmatrix}
   -1\\
   1
  \end{pmatrix} ,\qquad \begin{pmatrix}
 \frac{\vec{H} \sigma^{\perp}}{\overline{\delta}} & \frac{\sigma^{\perp}}{\overline{\delta}^2}\\
 \frac{\vec{H} \sigma^{0}}{\overline{\delta}^2} & \frac{\sigma^{0}}{\overline{\delta}^3}
\end{pmatrix} \to
\begin{pmatrix}
 1 & \frac{1}{2}\\
 -\frac{1}{2} & -\frac{1}{6}
\end{pmatrix}. 
 \end{align*}
Inverting  \eqref{SystemdeltaLiebrackets} we obtain that the functions $\overline{\delta} \overline{c}_{\rvf,J\rvf}^{J\rvf}$ and $\overline{\delta}^2 \overline{c}_{X_0,\rvf}^{J\rvf}$ that were a priori defined on $\mathfrak{S}\setminus S^*_p\M $, can in fact be smoothly extended to the domain $\mathfrak{S}$. Taking then the limit as $\overline{\delta}$ goes to zero, we find the values of $\overline{\delta} \overline{c}_{\rvf,J\rvf}^{J\rvf}$ and $\overline{\delta}^2 \overline{c}_{X_0,\rvf}^{J\rvf}$ on the set $\overline{\delta}^{-1}( 0 )=S^*_p\M$.
\end{proof}
 
 We  obtain similar results for $\overline{\delta}^2 \vec{H} \overline{c}_{\rvf,J\rvf}^{J\rvf}$ and $\overline{\delta}^2 \overline{J\rvf} \overline{c}_{\rvf,J\rvf}^{J\rvf}$. 
\begin{prop}\label{2aLie}
 The function $\overline{\delta}^2 \vec{H} \overline{c}_{\rvf,J\rvf}^{J\rvf}$, a priori defined on $\mathfrak{S}\setminus S^*_p\M $,  can be extended to a smooth function on  $\mathfrak{S}$ and its evaluation is equal to $4$ on $S^*_p\M$.
\end{prop}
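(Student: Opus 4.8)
The plan is to deduce this at once from Proposition~\ref{1aLie} by differentiating along the Hamiltonian flow, using the fact recorded in \eqref{partialdeltabar} that $\vec{H}$ coincides with $\partial/\partial\overline{\delta}$ in the coordinates $(\xi,\overline{\delta})$ on $\mathfrak{S}$. Set $f:=\overline{\delta}\,\overline{c}_{\rvf,J\rvf}^{J\rvf}$. By Proposition~\ref{1aLie} the function $f$, a priori defined on $\mathfrak{S}\setminus S^*_p\M$, extends to a smooth function on all of $\mathfrak{S}$, equal to $-4$ on $S^*_p\M$. Since $\mathfrak{S}$ is a manifold with coordinates $(\xi,\overline{\delta})$ and $\vec{H}=\partial/\partial\overline{\delta}$ is tangent to it, the derivative $\vec{H}f$ is again smooth on $\mathfrak{S}$, and in particular $\vec{H}\,\overline{\delta}=1$.

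First I would apply the Leibniz rule to the product $f=\overline{\delta}\,\overline{c}_{\rvf,J\rvf}^{J\rvf}$, which on $\mathfrak{S}\setminus S^*_p\M$ (where $\overline{c}_{\rvf,J\rvf}^{J\rvf}$ is defined) gives
\begin{equation*}
\vec{H}f=(\vec{H}\,\overline{\delta})\,\overline{c}_{\rvf,J\rvf}^{J\rvf}+\overline{\delta}\,\vec{H}\,\overline{c}_{\rvf,J\rvf}^{J\rvf}=\overline{c}_{\rvf,J\rvf}^{J\rvf}+\overline{\delta}\,\vec{H}\,\overline{c}_{\rvf,J\rvf}^{J\rvf}.
\end{equation*}
Multiplying by $\overline{\delta}$ and using $\overline{\delta}\,\overline{c}_{\rvf,J\rvf}^{J\rvf}=f$ yields the key relation
\begin{equation*}
\overline{\delta}^2\,\vec{H}\,\overline{c}_{\rvf,J\rvf}^{J\rvf}=\overline{\delta}\,\vec{H}f-f,
\end{equation*}
valid wherever the left-hand side is a priori defined.

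The conclusion then follows immediately. The right-hand side $\overline{\delta}\,\vec{H}f-f$ is manifestly smooth on all of $\mathfrak{S}$, since $f$ and $\vec{H}f$ are smooth there and $\overline{\delta}$ is smooth; hence $\overline{\delta}^2\,\vec{H}\,\overline{c}_{\rvf,J\rvf}^{J\rvf}$ extends smoothly to $\mathfrak{S}$. To obtain its value I would evaluate on $S^*_p\M=\overline{\delta}^{-1}(0)$: there the factor $\overline{\delta}$ forces $\overline{\delta}\,\vec{H}f=0$, while $f=-4$ by Proposition~\ref{1aLie}, so the extended function equals $0-(-4)=4$ on $S^*_p\M$, as asserted.

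I do not expect a genuine obstacle here: the entire content is already carried by the smoothness statement of Proposition~\ref{1aLie} together with the identification $\vec{H}=\partial/\partial\overline{\delta}$. The only point requiring a little care is the bookkeeping of the Leibniz rule, and the observation that multiplying the smooth quantity $\vec{H}f$ by $\overline{\delta}$ produces a factor vanishing on $S^*_p\M$ — precisely the mechanism that turns the boundary value $-4$ of $f$ into the value $4$ of the target function. The same scheme, applied to the remaining bracket $\overline{\delta}^2\,\overline{J\rvf}\,\overline{c}_{\rvf,J\rvf}^{J\rvf}$, will handle the last asymptotic of Proposition~\ref{asymptoticsLieprojected}.
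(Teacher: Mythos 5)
Your proposal is correct and follows essentially the same route as the paper: both set $f=\overline{\delta}\,\overline{c}_{\rvf,J\rvf}^{J\rvf}$, apply the Leibniz rule with $\vec{H}\overline{\delta}=1$ to get $\overline{\delta}^2\vec{H}\,\overline{c}_{\rvf,J\rvf}^{J\rvf}=\overline{\delta}\,\vec{H}f-f$, and evaluate on $\overline{\delta}^{-1}(0)$ using the value $-4$ from Proposition~\ref{1aLie}. No issues.
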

\begin{proof}
 We know from Proposition \ref{1aLie} that $\overline{\delta} \overline{c}_{\rvf,J\rvf}^{J\rvf}$ can be extended to a smooth function on $\mathfrak{S}$ that is equal to $-4$ on $\overline{\delta}^{-1}\left( 0 \right)$. Since $\vec{H}$ is also smooth, we can write
\begin{align*}
 \overline{\delta} \vec{H} \left(\overline{\delta} \overline{c}_{\rvf,J\rvf}^{J\rvf}  \right)&=\delta\left( \vec{H}\overline{\delta} \right) \overline{c}_{\rvf,J\rvf}^{J\rvf} +\overline{\delta}^2 \left( \vec{H}\overline{c}_{\rvf,J\rvf}^{J\rvf} \right).
\end{align*}
So, recalling that $\vec{H}\overline{\delta} $, we have that
\begin{align*}
 \overline{\delta}^2 \left( \vec{H}\overline{c}_{\rvf,J\rvf}^{J\rvf} \right) =\overline{\delta} \vec{H} \left(\overline{\delta} \overline{c}_{\rvf,J\rvf}^{J\rvf}  \right) -\overline{\delta} \overline{c}_{\rvf,J\rvf}^{J\rvf}
\end{align*}
has a smooth extension on $\mathfrak{S}$ that is equal to $4$ on $\overline{\delta}^{-1}\left( 0 \right)=S^*_p \M$.
\end{proof}

\begin{prop}\label{3aLie}
 The function $\overline{\delta}^2 \overline{J\rvf} \overline{c}_{\rvf,J\rvf}^{J\rvf}$ that is a priori defined on $\mathfrak{S}\setminus S^*_p\M$ can be extended to a smooth function on the domain $\mathfrak{S}$.
\end{prop}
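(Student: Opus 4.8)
The plan is to extract the quantity $B:=\overline{\delta}^{2}\,\overline{J\rvf}\,\overline{c}_{\rvf,J\rvf}^{J\rvf}$ by differentiating the \emph{already smooth} function $g:=\overline{\delta}\,\overline{c}_{\rvf,J\rvf}^{J\rvf}$ (smooth on $\mathfrak S$ by Proposition~\ref{1aLie}, with $g\equiv-4$ on $S^*_p\M$) along the two Jacobi fields $\mathcal J^{\perp},\mathcal J^{0}$ of Lemma~\ref{LJ0P}, which are smooth and tangent to $\mathfrak S$. First I would record that, writing $\pi_*\mathcal J^{i}=\alpha^{i}\rvf+\beta^{i}J\rvf+\sigma^{i}X_0$ and using $\rvf\delta=1$, $J\rvf\delta=0$, $X_0\delta=\rd$ from Lemma~\ref{propgraddelta}, any vector field $W$ on $T^*\M$ acts on $g=(\delta\,c_{\rvf,J\rvf}^{J\rvf})\circ\pi$ by $W(g)=\big((\pi_*W)\delta\big)\,\overline{c}_{\rvf,J\rvf}^{J\rvf}+\overline{\delta}\,(\pi_*W)(c_{\rvf,J\rvf}^{J\rvf})$. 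Taking $W=\mathcal J^{i}$ and multiplying by $\overline{\delta}$ produces, for $i\in\{\perp,0\}$,
\begin{equation*}
\overline{\delta}\,\mathcal J^{i}(g)=(\alpha^i+\sigma^i\rd)\,g+\alpha^i A+\beta^i B+\sigma^i E,
\end{equation*}
where $A:=\overline{\delta}^{2}\,\vec H\,\overline{c}_{\rvf,J\rvf}^{J\rvf}$ (smooth by Proposition~\ref{2aLie}) and $E:=\overline{\delta}^{2}\,\overline{X}_0\,\overline{c}_{\rvf,J\rvf}^{J\rvf}$. Since $g$ is smooth and the $\mathcal J^{i}$ are smooth fields tangent to $\mathfrak S$, each $\mathcal J^{i}(g)$ is smooth, so these two identities form a linear system for $(B,E)^{\top}$ with smooth right-hand side $Q^{i}:=\overline{\delta}\,\mathcal J^{i}(g)-(\alpha^i+\sigma^i\rd)g-\alpha^iA$ and coefficient matrix $M$ with rows $(\beta^{\perp},\sigma^{\perp})$ and $(\beta^{0},\sigma^{0})$.

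The delicate point, and the main obstacle, is that unlike the invertible system of Proposition~\ref{1aLie} the matrix $M$ is \emph{singular} at $\overline{\delta}=0$. Indeed, the first equation of \eqref{systemJacobi} written in the frame $(\rvf,J\rvf)$ reads $\beta^{i}=-\vec H\sigma^{i}$ on $\mathfrak S\setminus S^*_p\M$ (using $h_{\rvf}=1$, $h_{J\rvf}=0$ from Lemma~\ref{LFGJG}), so by the asymptotics $\sigma^{\perp}\sim\overline{\delta}^{2}/2$, $\sigma^{0}\sim-\overline{\delta}^{3}/6$ of Lemma~\ref{LJ0P} the four entries of $M$ all vanish at $\overline{\delta}=0$, with $\alpha^{i},\beta^{i},\sigma^{i}$ smooth and vanishing to orders $(2,1,2)$ for $i=\perp$ and $(3,2,3)$ for $i=0$. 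To overcome this I would factor $M=L\,\hat M\,R$ with $L=\mathrm{diag}(\overline{\delta},\overline{\delta}^{2})$ and $R=\mathrm{diag}(1,\overline{\delta})$, so that
\begin{equation*}
\hat M=\begin{pmatrix}\beta^{\perp}/\overline{\delta} & \sigma^{\perp}/\overline{\delta}^{2}\\ \beta^{0}/\overline{\delta}^{2} & \sigma^{0}/\overline{\delta}^{3}\end{pmatrix}
\end{equation*}
is smooth on $\mathfrak S$ by Lemma~\ref{LemmaRegularity} and converges, as $\overline{\delta}\to0$, to the matrix with rows $(-1,1/2)$ and $(1/2,-1/6)$, whose determinant $-1/12$ is nonzero. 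Hence $\hat M$ is invertible near $S^*_p\M$ with smooth inverse.

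To conclude, it remains to check that $L^{-1}(Q^{\perp},Q^{0})^{\top}=(Q^{\perp}/\overline{\delta},\,Q^{0}/\overline{\delta}^{2})^{\top}$ is smooth, i.e. that $Q^{\perp}$ vanishes to order $\ge1$ and $Q^{0}$ to order $\ge2$ at $\overline{\delta}=0$. This is a routine order count from the vanishing orders above together with the observation that $\mathcal J^{\perp}(g)$ and $\mathcal J^{0}(g)$ vanish at $\overline{\delta}=0$: on $S^*_p\M$ the fields $\mathcal J^{\perp}=h_{X_2}\partial_{h_{X_1}}-h_{X_1}\partial_{h_{X_2}}$ and $\mathcal J^{0}=\partial_{h_{X_0}}$ are tangent to $S^*_p\M$, while $g\equiv-4$ there. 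Solving $(B,\overline{\delta}E)^{\top}=\hat M^{-1}L^{-1}(Q^{\perp},Q^{0})^{\top}$ then exhibits $B=\overline{\delta}^{2}\,\overline{J\rvf}\,\overline{c}_{\rvf,J\rvf}^{J\rvf}$ as the first component of a smooth $\mathfrak S$-valued function, which is the desired smooth extension. Consistently with the statement, this degenerate system determines the \emph{smoothness} of $B$ but not a preferred value on $S^*_p\M$, the missing $O(\overline{\delta})$ datum corresponding precisely to the direction along which $M$ degenerates.
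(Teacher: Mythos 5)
Your argument is correct and follows essentially the same route as the paper: both proofs extract $\overline{\delta}^2\,\overline{J\rvf}\,\overline{c}_{\rvf,J\rvf}^{J\rvf}$ by applying the two Jacobi fields $\mathcal{J}^{\perp},\mathcal{J}^{0}$ to the already-smooth function $\overline{\delta}\,\overline{c}_{\rvf,J\rvf}^{J\rvf}$, using $\beta^{i}=-\vec{H}\sigma^{i}$, the asymptotics $\sigma^{\perp}\sim\overline{\delta}^{2}/2$, $\sigma^{0}\sim-\overline{\delta}^{3}/6$, and Lemma~\ref{LemmaRegularity}. The only difference is presentational: the paper eliminates the unwanted directions geometrically, noting that $\text{d}\pi\left(\sigma^{\perp}\mathcal{J}^{0}-\sigma^{0}\mathcal{J}^{\perp}\right)$ is colinear to $J\rvf$ and dividing by $b=\sigma^{0}\vec{H}\sigma^{\perp}-\sigma^{\perp}\vec{H}\sigma^{0}\sim\overline{\delta}^{4}/12$, which is exactly the formula your rescaled $2\times 2$ system produces via Cramer's rule.
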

\begin{proof} Let us consider the fields $\mathcal{J}^0$ and $\mathcal{J}^{\perp}$ over $\mathfrak{S}$ that we introduced in Lemma \ref{LJ0P}.
We start by proving the following claim: 
for every $\xi$ in $\mathfrak{S}$ the vector $V(\xi)$ defined by \eqref{definitionxi} is colinear to $J\rvf$.
\begin{align}\label{definitionxi}
 V(\xi):=\text{d}\pi \left( \sigma^{\perp}(\xi) \mathcal{J}^{0}(\xi) - \sigma^{0}(\xi) \mathcal{J}^{\perp}(\xi)   \right). 
\end{align}
First notice that $V(\xi)$ belongs to the distribution,  since its component along the Reeb vector field $X_0$ is zero thanks to \eqref{eq:LJ0P}.
Let us then prove that it is orthogonal to the gradient of $\delta$.
Indeed for $i\in \{0,\perp\}$, by definition of Jacobi field one has
 \begin{align*}
  0&=\left[ \vec{H},\mathcal{J}^i \right] \overline{\delta}= \vec{H}\mathcal{J}^i \overline{\delta} - \mathcal{J}^i (\vec{H}\overline{\delta})=\vec{H}\mathcal{J}^i \overline{\delta},
 \end{align*}
where we used that $\vec{H}\overline{\delta}=1$. So $\mathcal{J}^i \overline{\delta} $ is constant on the integral lines of $\vec{H}$. But on $S^*_p\M$, the function $\mathcal{J}^i \overline{\delta} $ is equal to zero.
Therefore 
 $\mathcal{J}^i \overline{\delta} =0$
on $\mathfrak{S}$. 
Thus d$\pi\left( \mathcal{J}^i\left( \xi \right) \right)$ belongs to the kernel of d$\delta$.
The vector $V(\xi)$ is a linear combination of vectors in the kernel of d$\delta$, hence it is also in the kernel of d$\delta$. By Lemma \ref{propgraddelta}, this means that $V(\xi)$ is colinear to $J\rvf$ and the claim is proved.

\medskip

Let us define
$  b:\mathfrak{S} \longrightarrow \mathbb{R}$
such that for any $\xi$ in $\mathfrak{S}$,
\begin{align}\label{xiprojectJGamma}
 V(\xi)=b(\xi)J\rvf\left( \pi(\xi) \right),
\end{align}
in such a way that for all $\xi$ in $\mathfrak{S}\setminus T^*_p\M$
\begin{align*}
 \text{d}\pi \left( \frac{ \sigma^{\perp}(\xi) \mathcal{J}^{0}(\xi) - \sigma^{0}(\xi) \mathcal{J}^{\perp}(\xi) }{b(\xi)}  \right)=J\rvf\left( \pi(\xi) \right).
\end{align*}
Since $\overline{c}_{\rvf,J\rvf}^{J\rvf}$ is
constant on the fiber of $T^*\M$, we can replace $\overline{J\rvf}$ in the expression with a vector field that projects over $J\rvf$. Then we have
\begin{align} \label{FFJG}
\overline{\delta}^2 \overline{J\rvf}\overline{c}_{\rvf,J\rvf}^{J\rvf} &= \overline{\delta}^2 \left( \frac{ \sigma^{\perp} \mathcal{J}^{0} \overline{c}_{\rvf,J\rvf}^{J\rvf}  - \sigma^{0} \mathcal{J}^{\perp} \overline{c}_{\rvf,J\rvf}^{J\rvf} }{b}\right).
\end{align}
To prove that the right hand side of \eqref{FFJG} is smooth, we write the fields $\mathcal{J}^i$ over $\mathfrak{S}\setminus T^*_p\M $ as :
\begin{align*}
 \mathcal{J}^i = \alpha^i \overline{\rvf} +\beta^i \overline{J\rvf} +\sigma^i \overline{X}_0 + j^i_1 \frac{\partial}{\partial h_{\rvf}} + j^i_2 \frac{\partial}{\partial h_{J\rvf}} +  j^i_0 \frac{\partial}{\partial h_{X_0}}.
\end{align*}
Combining with \eqref{definitionxi} and \eqref{xiprojectJGamma}, we obtain that for any $\xi$ in $\mathfrak{S}\setminus T^*_p\M $,
\begin{align*}
b(\xi)=\sigma^{\perp}(\xi)\beta^0(\xi) - \sigma^{0}(\xi)\beta^{\perp}(\xi).
\end{align*}
Now thanks to Lemma \ref{LFGJG}, $h_\rvf=1$  and $h_{J\rvf}=0$  on $\mathfrak{S}\setminus T^*_p\M $, so the first equation of
\eqref{systemJacobi} becomes
$\vec{H}\sigma^i = -\beta^i$.
As a consequence, for every $\xi$ in $\mathfrak{S}\setminus  T^*_p\M $,
\begin{align}
 b(\xi) =\sigma^{0}(\xi)\vec{H}\sigma^{\perp}(\xi)-\sigma^{\perp}(\xi)\vec{H}\sigma^{0}(\xi).
\end{align}
Hence
\begin{align*}
\overline{\delta}^2 \overline{J\rvf}\overline{c}_{\rvf,J\rvf}^{J\rvf} &= \overline{\delta}^2   \left(\frac{ \sigma^{\perp} \mathcal{J}^{0} \overline{c}_{\rvf,J\rvf}^{J\rvf}  - \sigma^{0} \mathcal{J}^{\perp} \overline{c}_{\rvf,J\rvf}^{J\rvf} }{b}\right) \\
 &= \overline{\delta}  \left(\frac{ \sigma^{\perp} \mathcal{J}^{0} \left( \overline{\delta} \overline{c}_{\rvf,J\rvf}^{J\rvf} \right)  - \sigma^{0} \mathcal{J}^{\perp}\left( \overline{\delta} \overline{c}_{\rvf,J\rvf}^{J\rvf} \right) }{\sigma^{0}\vec{H}\sigma^{\perp}-\sigma^{\perp}\vec{H}\sigma^{0}} \right)
 \end{align*}
where we used that $\mathcal{J}^{i}\overline\delta=0$ for $i\in\{0,\perp\}$. By applying Proposition \ref{1aLie}, $\overline{\delta} \overline{c}_{\rvf,J\rvf}^{J\rvf}$ can be extended to a smooth function defined on $\mathfrak{S}$
and its value on $S^*_p \M$ is constant. Therefore, the functions $\mathcal{J}^{i} ( \overline{\delta} \overline{c}_{\rvf,J\rvf}^{J\rvf} )$ can be extended to smooth functions on $\mathfrak{S}$
that vanish at every point of $S^*_p \M$. We combine this with the smoothness and the asymptotics of the functions $\sigma^i$ that come from Lemma \ref{LJ0P}, and thanks to Lemma \ref{LemmaRegularity}, the function $\overline{\delta}^2 \overline{J\rvf}c_{\rvf,J\rvf}^{J\rvf}$ can be
extended to a smooth function on  $\mathfrak{S}$.
\end{proof}

\subsection{Proof of Proposition~\ref{asymptoticsLieprojected}}

Let us start by proving the first identity.
We consider the  lift of $\hc$ defined by
$\overline{\hc} : I\setminus\{0\} \to  \mathfrak{S}$, where $\overline \hc (t)= \text{d}\delta_{\hc(t)}$.
Recall that $\overline{\delta} =\delta\circ\pi$ and that $\overline{c}_{i,j}^{k}={c}_{i,j}^{k}\circ \pi$. Therefore,
\begin{align}\label{ASPR}
\delta\left( \hc(t) \right)c_{\rvf,J\rvf}^{J\rvf}\left( \hc(t) \right)=\overline{\delta}\left( \overline{\hc}(t) \right)\overline{c}_{\rvf,J\rvf}^{J\rvf}\left( \overline{\hc}(t) \right).
\end{align}
Recall moreover that $\overline{\hc}(t)=\text{d}\delta_{\hc(t)}$ is the evaluation at time $\delta(\hc(t))$ of the integral line of the Hamiltonian
flow $\overline{\geo}_{\hc(t)}$, that is a lift of the minimizing geodesic $\geo_{\hc(t)}$ parametrized by arc length joining $p$ to $\hc(t)$.
In particular, by  Proposition \ref{mapmathfrakS},
\begin{align}\label{zetaHF}
 \overline{\hc}(t)=F^{-1}\left( \overline{\geo}_{\hc(t)}(0) ,\delta\left( \hc(t) \right) \right).
\end{align}
Combining  Remark~\ref{ProphX0geod} with \eqref{eq:limith0no} (cf.\ proof of Proposition~\ref{Proplimith0}), one has
\begin{align}\label{limithX0gammabar}
  h_{X_0}\left( \overline{\geo}_{\hc(t)}(0) \right) \stackrel{t\rightarrow 0}{\longrightarrow}h_{\hc}(0).
\end{align}
Since $\geo_{\hc(t)}$ is parametrized by arc length, then  $\overline{\geo}_{\hc(t)}$ is contained in $H^{-1}(1/2)$, which implies for $\left( X_1,X_2 \right)$ any choice of 
orthonormal frame of the distribution
\begin{align}\label{gammabarinE1}
 h^{2}_{X_1}\left( \overline{\geo}_{\hc(t)}(0) \right) +h^{2}_{X_2}\left( \overline{\geo}_{\hc(t)}(0) \right) =1.
\end{align}
By combining \eqref{zetaHF}, \eqref{limithX0gammabar}, \eqref{gammabarinE1}  we obtain that for $t$ small enough (recall $
 \delta\left( \hc(t)\right)\to 0
$), $ \overline{\hc}(t)$ belongs to a compact subset $K$
of $\mathfrak{S}$. Thanks to Proposition \ref{1aLie}, the function $\overline{\delta}\overline{c}_{\rvf,J\rvf}^{J\rvf}$ is uniformly continuous on $K$.
Now since $\overline{\delta}( \overline{\hc}(t) )=\delta( \hc(t) )\to 0$ for $t\to 0$, and as $\overline{\delta}\overline{c}_{\rvf,J\rvf}^{J\rvf}$ is equal to
$-4$ on $\overline{\delta}^{-1}(0)=S^*_p\M$, we deduce from the uniform continuity 
\begin{align*}
\lim_{t\to 0} \overline{\delta}\left( \overline{\hc}(t) \right)\overline{c}_{\rvf,J\rvf}^{J\rvf}\left( \overline{\hc}(t) \right)=-4.
\end{align*}
which proves, thanks to \eqref{ASPR},  the first claim.

The other asymptotics follows from similar arguments, where we use Propositions \ref{2aLie} and \ref{3aLie} instead of Proposition \ref{1aLie} and where we replace \eqref{ASPR} by the  relations
\begin{align*}
 {\delta}^2\left( \hc(t) \right)c_{\rvf,X_0}^{J\rvf}\left( \hc(t) \right) &=  \overline{\delta}^2\left( \overline{\hc}(t) \right)\overline{c}_{\rvf,X_0}^{J\rvf}\left( \overline{\hc}(t) \right) ,\\
 {\delta}^2\left( \hc(t) \right)\rvf c_{\rvf,J\rvf}^{J\rvf}\left( \hc(t) \right) &= \overline{\delta}^2\left(  \overline{\hc}(t) \right)\vec{H} \overline{c}_{\rvf,J\rvf}^{J\rvf}\left( \overline{\hc}(t) \right), \\
 {\delta}^2\left( \hc(t) \right)J\rvf c_{\rvf,J\rvf}^{J\rvf}\left( \hc(t) \right) &=\overline{\delta}^2\left( \overline{\hc}(t) \right)\overline{J\rvf} \overline{c}_{\rvf,J\rvf}^{J\rvf}\left( \overline{\hc}(t) \right).
\end{align*}
which are proved as \eqref{ASPR}, using that $\rvf\circ\pi=\text{d}\pi\circ\vec{H}$ (cf.\ \eqref{partialdeltabar}) and 
$J\rvf\circ\pi=\text{d}\pi\circ \overline{J\rvf}$.

\bibliographystyle{alphabbrv}
\bibliography{biblio}

\end{document}